\documentclass[twoside,leqno,symbols-for-thanks]{rmi}
\usepackage[colorlinks,linkcolor=black,citecolor=black,urlcolor=black, linktocpage=true,dvips]{hyperref}
\usepackage{srcltx}
\usepackage{tikz}
\usepackage[english]{babel}
\numberwithin{equation}{section}
\setinitialpage{1}
\oddsidemargin=40pt
\evensidemargin=40pt
\topmargin=30pt

\newcommand{\R}{{\mathbb R}}
\newcommand{\eps}{\varepsilon}
\newcommand{\psl}{(-\Delta)^s_p}
\newcommand{\beq}{\begin{equation}}
\newcommand{\eeq}{\end{equation}}

\newcommand{\T}{{\rm Tail}}
\newenvironment{enumroman}{\begin{enumerate}

}{\end{enumerate}}
\def\Xint#1{\mathchoice
{\XXint\displaystyle\textstyle{#1}}%
{\XXint\textstyle\scriptstyle{#1}}%
{\XXint\scriptstyle\scriptscriptstyle{#1}}%
{\XXint\scriptscriptstyle\scriptscriptstyle{#1}}%
\!\int}
\def\XXint#1#2#3{{\setbox0=\hbox{$#1{#2#3}{\int}$ }
\vcenter{\hbox{$#2#3$ }}\kern-.6\wd0}}

\def\dashint{\Xint-}

\DeclareMathOperator*{\essinf}{ess\,inf}

\numberwithin{equation}{section}
\newtheorem{theorem}{Theorem}[section]
\newtheorem{lemma}[theorem]{Lemma}

\newtheorem{proposition}[theorem]{Proposition}
\newtheorem{corollary}[theorem]{Corollary}
\newtheorem{definition}[theorem]{Definition}
\theoremstyle{definition}
\newtheorem{remark}[theorem]{Remark}

\title[Global H\"older regularity for the fractional $p$-Laplacian]{Global H\"older regularity\\ for the fractional $p$-Laplacian}

\author[A. Iannizzotto, S. Mosconi and M. Squassina]{Antonio Iannizzotto, Sunra Mosconi and Marco Squassina}

\address[antonio.iannizzotto@unica.it]{{\sc A. Iannizzotto}: Dipartimento di Matematica e Informatica, Universit\`a degli Studi di Cagliari, Viale L.\ Merello 92, 09123 Cagliari, Italy}

\address[sunrajohannes.mosconi@univr.it]{{\sc S. Mosconi}: Dipartimento di Informatica, Universit\`a degli Studi di Verona, Strada le Grazie 15, I-37134 Verona, Italy}

\address[marco.squassina@univr.it]{{\sc M. Squassina}: Dipartimento di Informatica, Universit\`a degli Studi di Verona, Strada le Grazie 15, I-37134 Verona, Italy}

\amsclassification[]{35D10, 35R11, 47G20}

\keywords{Fractional $p$-Laplacian, fractional Sobolev spaces, global H\"older regularity}

\begin{document}

\begin{abstract}
By virtue of barrier arguments we prove $C^\alpha$-regularity up to the boundary for the weak solutions of a non-local, non-linear problem driven by the fractional $p$-Laplacian operator. The equation is boundedly inhomogeneous and the boundary conditions are of Dirichlet type.\ We employ different methods according to the singular ($p<2$) of degenerate ($p>2$) case.
\end{abstract}

\section{Introduction and main result}\label{sec1}

\noindent
We study H\"older regularity up to the boundary for the weak solutions of the Dirichlet problem
\beq\label{dir}
\begin{cases}
(-\Delta)_p^s u=f & \text{in $\Omega$} \\
u=0 & \text{in $\Omega^c$}.
\end{cases}
\eeq
Here $\Omega\subset\R^N$ ($N>1$) is a bounded domain with a $C^{1,1}$ boundary $\partial\Omega$, $\Omega^c=\R^N\setminus\Omega$, $s\in(0,1)$ and $p\in(1,\infty)$ are real numbers and $f\in L^\infty(\Omega)$. The $s$-fractional $p$-Laplacian operator is the gradient of the functional
\[J(u):=\frac{1}{p}\int_{\R^N\times\R^N}\frac{|u(x)-u(y)|^p}{|x-y|^{N+ps}}\, dx\, dy,\]
defined on
\[W^{s,p}_0(\Omega):=\{u\in L^p(\R^N): J(u)<\infty, \,\, u=0 \text{ in $\Omega^c$}\},\]
which is a Banach space with respect to the norm $J(u)^{1/p}$. Under suitable smoothness conditions on $u$ the operator can be written as
\begin{equation*}
(- \Delta)_p^s\, u(x) = 2 \lim_{\varepsilon \searrow 0} \int_{B_\varepsilon^c(x)} \frac{|u(x) - u(y)|^{p-2}\, (u(x) - u(y))}{|x - y|^{N+sp}}\, dy, \quad x \in \R^N.
\end{equation*}
A weak solution $u\in W^{s,p}_0(\Omega)$ of problem \eqref{dir} satisfies,
for every $\varphi\in W^{s,p}_0(\Omega)$, 
\[\int_{\R^N\times\R^N}\frac{|u(x)-u(y)|^{p-2}(u(x)-u(y))(\varphi(x)-\varphi(y))}{|x-y|^{N+ps}} \,dx\,dy=\int_\Omega f(x)\varphi(x) \,dx.\]
Problem \eqref{dir} is thus well posed and, in the case $p=2$, it corresponds to an inhomogeneous fractional Laplacian equation with Dirichlet boundary condition.  
For the sake of completeness we recall that in the literature the fractional Laplacian is often defined by
\begin{equation*}
 \langle (-\Delta)^su,\varphi\rangle
  = \frac{c(N,s)}{2}  \int_{\R^N\times\R^N}\frac{(u(x)-u(y))(\varphi(x)-\varphi(y))}{|x-y|^{N+2s}} \,dx\,dy,\quad \varphi \in  W^{s,2}_0(\Omega),
\end{equation*}
where $c(N,s) = s 2^{2s} \, \Gamma((N+2s)/2) / (\pi^{N/2} \Gamma(1-s))$, in order to be coherent with the
Fourier definition of $(-\Delta)^s$ (see Remark 3.11 of \cite{CS1}). We point out that, in the current literature, there are several
notions of fractional Laplacian, all of which agree when the problems
are set on the whole $\R^N$, but some of them disagree in a bounded
domain.  We refer the reader to~\cite{SV2} for a discussion on the comparison
between the integral fractional laplacian and the regional (or spectral) notion
obtained by taking the $s$-powers of the Laplacian operator $-\Delta$ with zero Dirichlet
boundary conditions.
\vskip2pt
\noindent
In the case $p\neq 2$, problem \eqref{dir} is a non-local and non-linear one. 
Its leading term $\psl$ is furthermore degenerate when $p>2$ and singular when $1<p<2$. Determining sufficiently good regularity estimates {\em up to the boundary} is not only relevant by itself, but it also has useful applications in obtaining multiplicity results for more general non-linear and non-local equations, such as those investigated in \cite{ILPS} in the framework of topological methods and Morse theory. To this regard, 
this contribution provides a first step in order to obtain the results of \cite{IMS} in the general case $p\neq 2$.
\vskip2pt
\noindent
The regularity up to the boundary of fractional problems in the case $p=2$ is now rather 
well understood, even when more general kernels and nonlinearities are considered.  Using a viscosity solution approach, the model linear case gives regularity for fully non-linear equations which are ``uniformly elliptic" in a suitable sense.
Regarding the viscosity approach to fully non-linear, elliptic non-local equation, see \cite{CafSil1,CafSil2} for interior regularity theory with smooth kernels, and \cite{Serra} for rough kernels; regarding boundary regularity, see \cite{RS3} for nearly optimal results and a detailed discussion on the delicate role that the kernel's regularity class plays in such problems.
\vskip2pt
\noindent
Equation \eqref{dir}, however, does not fall in the category of non-local non-linear equations treated in the aforementioned works. This is not surprising, due to the degenerate/singular nature of the nonlinearity, and the $s$-fractional $p$-Laplacian is the non-local analogue of a degenerate/singular non-linear {\em divergence form} equation, rather than of a uniformly elliptic fully non-linear one. Local H\"older continuity has been addressed in \cite{DKP1,DKP2} using methods {\em \'a la} De Giorgi, and in \cite{Ling} with a Krylov-Safanov approach for $p>1/(1-s)$. In \cite{BCF} the fully non-linear approach is used to study the non-local analogue of the $p$-Laplacian equation in non-divergence form
\[ \Delta u +(p-2)\frac{\nabla u}{|\nabla u|}D^2u\frac{\nabla u}{|\nabla u|}=0,\]
arising from non-local `tug of war' games. Interior $C^{1,\alpha}$ estimates and H\"older continuity up to the boundary is proved under rather general assumptions.
\vskip2pt
\noindent
Our main result is the following:

\begin{theorem}\label{main}
There exist $\alpha\in(0,s]$ and $C_\Omega>0$, depending only on $N$, $p$, $s$, with $C_\Omega$ also depending on $\Omega$, such that, for all weak solution $u\in W^{s,p}_0(\Omega)$ of problem \eqref{dir}, $u\in C^\alpha(\overline\Omega)$ and
\beq\label{thm57tesi}
\|u\|_{C^\alpha(\overline\Omega)}\le C_\Omega\|f\|_{L^\infty(\Omega)}^\frac{1}{p-1}.
\eeq
\end{theorem}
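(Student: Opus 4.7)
By the $(p-1)$-homogeneity of $\psl$, replacing $u$ with $u/K$ for $K=\|f\|_{L^\infty(\Omega)}^{1/(p-1)}$ reduces the problem to the normalized case $\|f\|_{L^\infty(\Omega)}\le 1$, where one needs $\|u\|_{C^\alpha(\overline\Omega)}\le C_\Omega$. A preliminary global $L^\infty$-bound $\|u\|_{L^\infty(\R^N)}\le C$ can be obtained by a De~Giorgi/Moser-type iteration using the truncations $(u-k)_+\in W^{s,p}_0(\Omega)$ as test functions in the weak formulation. An interior $C^\alpha$ oscillation decay of the form $\osc_{B_{r/2}}u\le C(r^\alpha+\T(u;r))$ on balls $B_r\Subset\Omega$ is already available from \cite{DKP1,DKP2}. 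Consequently, the only genuinely missing piece is a quantitative boundary estimate.

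\textbf{Barrier construction and boundary decay.} The core of the proof is the estimate
\[
|u(x)|\le C\,d(x,\partial\Omega)^\alpha\qquad\text{for every }x\in\Omega,
\]
for some $\alpha\in(0,s]$ depending only on $N,p,s$. Since $\partial\Omega\in C^{1,1}$, every boundary point enjoys a uniform interior and exterior ball condition, so after a rigid motion we may work locally near $0\in\partial\Omega$ with $\Omega\cap B_R$ trapped between the two tangent balls. The strategy is to build an explicit supersolution $w$, modeled after $(x_N)_+^\alpha$ in the half-space, such that $\psl w\ge 1$ pointwise in $\Omega\cap B_R$, $w\ge\|u\|_{L^\infty(\R^N)}$ in $\R^N\setminus B_R$, and $w(x)\le C\,d(x,\partial\Omega)^\alpha$ near the boundary. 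Pointwise evaluation of $\psl w$ is performed by splitting the defining principal-value integral into a boundary-layer piece (controlled by taking $\alpha\in(0,s]$ sufficiently small) and a tail piece (controlled by the prescribed behaviour of $w$ far from $0$). A weak comparison principle for $\psl$ applied to $\pm u$ and $w$ then delivers the boundary decay displayed above.

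\textbf{Singular versus degenerate regimes.} The barrier has to be tailored to $p$, in accordance with the dichotomy announced in the abstract. In the degenerate case $p>2$ the power profile $(x_N)_+^\alpha$ can be used directly: the nonlinear weight $|w(x)-w(y)|^{p-2}$ is small where $w$ is flat, and the tail contribution is a bounded perturbation. In the singular case $1<p<2$ the same profile yields a divergent contribution from small increments near $\partial\Omega$ and one is forced to use a concavified barrier, for example of the form $a-b(1-d(x,\partial\Omega))_+^\beta$ with exponents chosen so that the singularity of $|w(x)-w(y)|^{p-2}$ is absorbed; a rescaling/iteration argument then upgrades a unit-scale supersolution estimate to a boundary decay valid at every scale.

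\textbf{Globalization and main obstacle.} Once $|u(x)|\le C\,d(x,\partial\Omega)^\alpha$ is known, the global estimate \eqref{thm57tesi} follows by the standard dichotomy: for $x,y\in\overline\Omega$ set $d:=\max\{d(x,\partial\Omega),d(y,\partial\Omega)\}$; if $|x-y|\ge d/2$ the boundary decay gives $|u(x)-u(y)|\le C|x-y|^\alpha$ directly, while if $|x-y|<d/2$ the ball $B_{d/2}(x)$ lies in $\Omega$ and the interior oscillation decay from \cite{DKP1,DKP2} applies, the tail $\T(u;d/2)$ being in turn controlled by the boundary decay. The principal difficulty I anticipate is precisely the barrier construction in the singular regime $1<p<2$: there is no explicit ``fractional $p$-harmonic'' profile in a half-space, and $\psl$ acts badly on low-regularity power functions, so the bulk of the technical work will go into producing a supersolution whose $\psl$ can be estimated \emph{uniformly} at all small scales near $\partial\Omega$, with the correct dependence on $\|f\|_{L^\infty}$ after undoing the initial normalization.
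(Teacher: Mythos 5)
Your proposal has the right overall architecture (normalize by homogeneity, prove a global $L^\infty$ bound, construct boundary barriers, prove an interior oscillation/Harnack estimate, glue the two regimes together), and the final dichotomy argument you sketch in the globalization step is essentially what the paper does. However, you misidentify the central technical obstacle and, as a consequence, your barrier step rests on an incorrect premise.

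You write that ``there is no explicit `fractional $p$-harmonic' profile in a half-space'' and that the singular case $1<p<2$ therefore forces a concavified barrier distinct from the degenerate case. In fact the key computation driving the whole paper (Lemma \ref{sol1d} and its multidimensional version Lemma \ref{solnd}) is that $u_1(x)=(x_N)_+^s$ solves $\psl u_1=0$ strongly and weakly in the half-space \emph{for every} $p>1$. This is checked by an explicit one-dimensional change-of-variables argument, and it makes the barrier construction completely uniform in $p$: one transports $(x_N)_+^s$ by a $C^{1,1}$ diffeomorphism close to the identity (Lemma \ref{lemmadiffeo}, yielding $\|\psl(u_1\circ\Phi^{-1})\|_\infty\le C\|D^2\Phi\|_\infty$), truncates it away from a small ball to exploit non-locality and gain a strictly positive lower bound on $\psl$, and thereby obtains a supersolution at exponent $s$ (Lemma \ref{moon}). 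No concavification and no separate singular/degenerate barrier is needed, and the boundary decay obtained is the sharp $|u|\le C\|f\|_\infty^{1/(p-1)}\delta^s$ (Theorem \ref{estid}), not merely $\delta^\alpha$ for unspecified small $\alpha$.

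The dichotomy between $p<2$ and $p>2$ announced in the abstract actually lives elsewhere. It appears in the \emph{interior} regularity machinery: for $1<p<2$ one cannot use a smooth cut-off as a lower barrier, because (Remark \ref{remarkL}) there exist $C^\infty_c$ functions whose strong $\psl$ is not pointwise bounded when $s$ is large, so the weak Harnack inequality (Theorem \ref{harnack}) is proved with two genuinely different arguments: a smooth bump for $p\ge 2$, and the explicit solution of $\psl v=1$ in $B_1$ together with a Jensen-type inequality (Lemma \ref{jensen}) for $1<p<2$. Similarly, the oscillation decay (Theorem \ref{osc}) uses a different algebraic manipulation in the two ranges. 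Your reliance on ``an interior $C^\alpha$ oscillation decay already available from \cite{DKP1,DKP2}'' is also a genuine gap: those references treat the homogeneous equation (local minimizers / fractional $p$-harmonic functions). The paper explicitly points out that an inhomogeneous interior estimate in the full range $p>1$ was not available (it could be extracted from \cite{KMS} only for $p>2-s/N$ and from \cite{Ling} only for $p>1/(1-s)$), and Theorem \ref{osc} is precisely the needed new interior result. Finally, your proposed $L^\infty$ bound via De~Giorgi iteration would work, but the paper obtains it more quickly by comparison with the solution of $\psl v=1$ in a large ball (Corollary \ref{apb}), consistently with its overall preference for barrier arguments over energy methods.
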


\noindent
Notice that, regarding regularity {\em up to the boundary}, one cannot expect more than $s$-H\"older continuity due to explicit examples (see Section \ref{sec3} below). On the other hand, the optimal H\"older exponent up to the boundary seems to be $s$ for any $p>1$, while we prove $C^\alpha$ regularity for an unspecified small $\alpha$, the issue being a lack of higher (at least $C^s$) regularity results in the interior of the domain.
\vskip2pt
\noindent
Let us describe the strategy to prove Theorem \ref{main}. We choose to use the notion of weak rather than viscosity solution, since we feel that the equation is more naturally seen as a variational one. However, we will frequently use barrier arguments, rather than De Giorgi-Nash-Moser techniques. Indeed, the proof of Theorem \ref{main} is performed in the spirit of Krylov's approach to boundary regularity, see \cite{krylov}, and uses two main ingredients:
\begin{itemize}
\item[$(a)$]
a uniform H\"older control (see Theorem \ref{estid}) on how $u$ reaches its boundary values, which amounts to 
\beq\label{deltasintro}
|u(x)|\leq C\|f\|_\infty^{\frac{1}{p-1}}{\rm dist}^s(x,\Omega^c);
\eeq
\item[$(b)$]
a local regularity estimate (see Theorem \ref{osc}) in terms of quantities which may blow up in general when reaching 
the boundary, but remain bounded for functions satisfying \eqref{deltasintro}.
\end{itemize}
Point $(a)$ is obtained through a barrier argument, and stems from the fact that  $\psl (x_+)^s=0$  in the half line $\R_+$. Notice that for $p\neq 2$ we do not have at our disposal the fractional Kelvin transform, and the concrete calculus of the $s$-fractional $p$-Laplacian even on smooth functions is a prohibitive task, in general. 
Thus constructing upper barriers can be quite technical, and is done as following:
\begin{itemize}
\item
Consider $u_N(x)=(x_N)_+^s$: explicit calculus shows that $\psl u_N=0$ in the half-space $\R^N_+$. We locally deform the half-space to $\Omega^c$ by a  diffeomorphism $\Phi$ close to the identity, and obtain a function $u_N\circ \Phi$ with small $s$-fractional $p$-Laplacian in a small ball $\hat B$ centered at a point of $\partial\Omega$.
\item
The resulting function $u_N\circ \Phi$ can be controlled in $\hat B\cap \Omega$ by distance-like functions from the boundary, and we can modify it to globalize the controls, while keeping the smallness of $\psl(u_N\circ\Phi)$ in $\hat B\cap \Omega$.
\item
We exploit the non-local nature of the equation to add a fixed {\em positive} quantity to $\psl(u_N\circ\Phi)$ in $\hat B\cap \Omega$, by truncation away from $\hat B$. Since $\psl(u_N\circ\Phi)$ is arbitrarily small, its truncation has therefore $s$-fractional $p$-Laplacian bounded from below by a positive constant in $\hat B\cap \Omega$, and provides the local upper barrier.
\end{itemize}
Point $(b)$ is a generalization, in the whole range $p>1$, to non-homogeneous equations of Theorem 1.2 from \cite{DKP1}, and it could be deduced in the case $p>2-s/N$ using the results of \cite{KMS} and in the case $p>1/(1-s)$ using \cite{Ling}. However we choose to prove it with a different approach. Much in the spirit of \cite{S}, rather than considering the non-locality of the equation as an additional technical difficulty to the implementation of the De Giorgi-Moser regularity theory, we use it at our advantage to construct a more elementary proof. It should be noted that we do not employ Caccioppoli-like inequalities, or estimates on $\log u$ (which are the elementary counterpart of John-Nirenberg's lemma). Actually we don't even need a Poincar\'e or Sobolev inequality, which are usually looked at as basic tools for (variational) regularity theory. This feature seems typical of the non-local framework and it should be noted that the proof doesn't  seem to immediately ``pass to the limit to local equations'' as the obtained estimates blow up for  $s\to 1$. 
\vskip2pt
\noindent
Regarding possible developments and generalizations, a first remark regards the choice of the kernel in the non-local operator
\[L(u)={\rm PV}\int_{\R^N}|u(x)-u(y)|^{p-2}(u(x)-u(y))K(x,y)\, dy.\]
Regarding interior regularity, a bound from above and below in terms of the model kernel $|x-y|^{-N-ps}$ seems to suffice to obtain H\"older regularity, due to the results of \cite{DKP1,KMS}. For non-local, fully non-linear, uniformly elliptic equation, higher interior regularity (up to $C^{2, \alpha}$) is proved in \cite{CafSil1,CafSil2,Serra} when the kernel satisfies additional structural and regularity assumption, but no such result is known for the $s$-fractional $p$-Laplacian.  
Regarding regularity up to the boundary things are more subtle. In the uniformly elliptic case ($p=2$), the optimal regularity is $C^s(\overline{\Omega})$ due to the results of \cite{RS3}, but only for a subclass of rough symmetric kernels arising from stable L\'evy processes, of the form
\[K(x, y)=H(x-y),\quad H(z)=\frac{a\big(z/|z|\big)}{|z|^{N+2s}}, \quad 0<\lambda\leq a\leq \Lambda.\] 
Counterexamples show that this is the largest kernel's class where to expect such regularity up to the boundary. However, for any $p>1$, one still expects $C^\alpha(\overline{\Omega})$ regularity for arbitrarily rough symmetric kernels, for a small $\alpha<s$.
\vskip2pt
\noindent
Another point of interest is the H\"older regularity up to the boundary of $u/{\rm dist}^s(x,\Omega^c)$, when $\psl u$ is bounded in $\overline{\Omega}$. This is proven in \cite{RS} for the fractional Laplacian, and in \cite{RS3} for the L\'evy stable fully non-linear, uniformly elliptic non-local equations. While undoubtedly being relevant in light of the applications depicted in \cite{ILPS}, we do not treat this problem here.
\vskip2pt
\noindent
The structure of the paper is as follows:
\begin{itemize}
\item In Section \ref{sec2} we mainly discuss the relationship between weak and strong (i.e., in a suitable principal value sense) solutions of \eqref{dir}. In doing so we clarify how barrier arguments (which are more suited to viscosity solutions) can be applied in the framework of weak solutions of non-linear non-local problems.
\item In Section \ref{sec3} we study the $s$-fractional $p$-Laplacian of distance-related functions, and consider their stability with respect to local diffeomorphisms of the domain.
\item In Section \ref{sec4} we construct some upper barriers, derive $L^\infty$-bounds for solutions of \eqref{dir} and prove estimate \eqref{deltasintro}.
\item In Section \ref{sec5} we tackle the local regularity through a weak Harnack inequality. Then we couple it with \eqref{deltasintro} to prove Theorem \ref{main}.
\end{itemize}
\vskip2pt
\noindent
A short description of the result obtained in the present paper can be found in \cite{IMS-RLM}.

\section{Preliminaries}\label{sec2}

\subsection{Notations and function spaces}

\noindent
Given a subset $A\subseteq\R^N$ we will set $A^c=\R^N\setminus A$ and for $A, B\subseteq\R^N$,
\[{\rm dist}(A, B)=\inf_{x\in A,\,y\in B}|x-y|, \quad \delta_A(x)={\rm dist}(x, A^c),\]
\[{\rm dist}_H(A, B)= \max\Big\{\sup_{x\in A}{\rm dist}(x, B), \sup_{y\in B}{\rm dist}(y, A)\Big\}.\]
For all $x\in\R^N$, $r>0$ we denote by $B_r(x)$, $\overline B_r(x)$, and $\partial B_r(x)$, respectively, the open ball, the closed ball and the sphere centered at $x$ with radius $r$. When the center is not specified, we will understand that it's the origin, e.g. $B_1=B_1(0)$. For all measurable $A\subset\R^N$ we denote by $|A|$ the $N$-dimensional Lebesgue measure of $A$. If $u$ is a measurable function and $A$ is a measurable subset of $\R^N$, we will set for brevity
\[\inf_A u=\essinf_A u,\quad \sup_A u=\essinf_A u.\]
For all measurable $u:\R^N\to\R$ we define
\[[u]_{s,p}=\Big(\int_{\R^N\times\R^N}\frac{|u(x)-u(y)|^p}{|x-y|^{N+ps}} \,dx\,dy\Big)^{\frac{1}{p}},\]
\[\|u\|_{W^{s,p}(\Omega)}=\|u\|_{L^p(\Omega)}+\Big(\int_{\Omega\times\Omega}\frac{|u(x)-u(y)|^p}{|x-y|^{N+ps}}\, dx\, dy\Big)^{\frac{1}{p}}\]
and we will consider the following spaces (see \cite{DPV} for details):
\begin{align*}
&W^{s,p}(\Omega)=\big\{u\in L^p(\Omega):\|u\|_{W^{s,p}(\Omega)}<\infty\big\}, \\
&W^{s,p}_0(\Omega) =\big\{u\in W^{s,p}(\R^N):\,u=0 \ \text{in $\Omega^c$}\big\},\\
&W^{-s,p'}(\Omega)=(W^{s,p}_0(\Omega))^*,
\end{align*}
where the last one is the Banach dual, whose pairing with $W^{s,p}_0(\Omega)$ will be denoted by $\langle\cdot,\cdot\rangle_{s,p,\Omega}$.
We will extensively make use of the following space:
\begin{definition}\label{defwtilde}
Let $\Omega\subseteq \R^N$ be bounded. We set
\[\widetilde{W}^{s,p}(\Omega):=\Big\{u\in L^p_{\rm loc}(\R^N):\,\exists\,U\Supset\Omega \ \text{s.t.}\,\|u\|_{W^{s,p}(U)}+\int_{\R^N}\frac{|u(x)|^{p-1}}{(1+|x|)^{N+ps}}\, dx<\infty\Big\}.\]
If $\Omega$ is unbounded, we set
\[\widetilde{W}^{s,p}_{\rm loc}(\Omega):=\big\{u\in L^p_{\rm loc}(\R^N):\,u\in \widetilde{W}^{s,p}(\Omega')\, \text{for any bounded $\Omega'\subseteq\Omega$}\big\}.\]
\end{definition}

\noindent
We notice that the condition 
\[\int_{\R^N}\frac{|u(x)|^{p-1}}{(1+|x|)^{N+ps}}\, dx<\infty\]
holds if  $u\in L^\infty(\R^N)$ or $[u]_{C^s(\R^N)}<\infty$. The spaces $\widetilde{W}^{s,p}(\Omega)$, $\widetilde{W}^{s,p}_{\rm loc}(\Omega)$ can be endowed with a topological vector space structure as inductive limit, but we will not use it.
For all $\alpha\in(0,1]$ and all measurable $u:\overline\Omega\to\R$ we set
\[[u]_{C^\alpha(\overline\Omega)}=\sup_{x,y\in\overline\Omega,\,x\neq y}\frac{|u(x)-u(y)|}{|x-y|^\alpha},\]
\[C^\alpha(\overline\Omega)=\big\{u\in C(\overline\Omega):\,[u]_{C^\alpha(\overline\Omega)}<\infty\big\},\]
the latter being a Banach space under the norm $\|u\|_{C^\alpha(\overline\Omega)}=\|u\|_{L^\infty(\overline\Omega)}+[u]_{C^\alpha(\overline\Omega)}$. A similar definition is given for $C^{1,\alpha}(\overline\Omega)$. When no misunderstanding is possible, we set for all measurable $D\subset\R^N$, $x\in D$, and all measurable $\psi:D\times D\to\R$
\[{\rm PV}\int_{D}\psi(x, y)\,dy=\lim_{\eps\to 0^+}\int_{D\setminus B_\eps(x)}\psi(x, y)\,dy.\]
For all measurable $u:\R^N\to\R$ we recall that the {\em non-local tail} centered at $x\in\R^N$ with radius $R>0$, introduced in \cite{DKP1}, is defined as
\beq\label{deftail}
\T(u;x,R)=\Big(R^{ps}\int_{B_R^c(x)}\frac{|u(y)|^{p-1}}{|x-y|^{N+ps}}\,dy\Big)^\frac{1}{p-1}.
\eeq
We will also set $\T(u; 0, R)=\T(u; R)$. Unless otherwise stated, the numbers $p>1$ and $s\in(0,1)$ will be fixed as the order of summability and the order of differentiability. By a {\em universal} constant we mean a constant $C=C(N,p,s)$. This dependence will always be omitted, even when other dependencies are present, in which case they are the only ones explicitly stated: for example $C_\Omega$ will denote a constant depending on $N, p, s$, and $\Omega$. During chains of inequalities, universal constants will be denoted by the same letter $C$ even if their numerical value may change from line to line. The same treatment will be used for constants which retain their dependencies from line to line. When needed, we will denote a specific universal constant with a number, e.g.\ $C_1$, $C_2$ {\em et cetera}.

\subsection{Some elementary inequalities} 

For all $a\in\R$, $q>0$, we set
\[a^{q}=|a|^{q-1}a.\]
This notation has great advantages in readability and, for future reference, we recall here some more or less known elementary inequalities about the function $a\mapsto a^q$. We will provide a sketch of proof for the less frequent ones.
\vskip2pt
\noindent
We begin with the well known inequalities
\beq\label{in3}
(a+b)^q\le 2^{q-1}(a^q+b^q) \quad  a,b\ge 0,\,q\ge 1;
\eeq
\beq\label{in2}
(a+b)^q\le a^q+b^q \quad  a,b\ge 0,\,q\in(0,1];
\eeq
\beq\label{in4}
|a^q-b^q|\le q(|a|^{q-1}+|b|^{q-1})|a-b| \quad a,b\in\R,\,q\ge 1,
\eeq
the last one being a trivial consequence of Taylor's formula. We will also use
\beq\label{in6}
a^q-(a-b)^q\le C_M\max\{b,b^q\} \quad  |a|\le M,\,b\ge 0,\,q>0,
\eeq
which follows immediately from \eqref{in2} if $q\in (0,1]$. If $q>1$ we can prove it distinguishing the cases $b\leq M$, where we use \eqref{in4}, and the case $b\geq M$, where we use $a^q-(a-b)^q\leq M^q+2M^q\leq 3b^q$.
We now prove
\beq\label{in7}
(a+b)^q-a^q\le\theta a^q+C_\theta b^q \quad  a,b\ge 0,\,q\ge 1,\,C_\theta\to\infty \ \text{as} \ \theta\to 0^+.
\eeq
Letting  $C_q= 1$ if $q\leq 1$ and $C_q=2^{q-1}$ if $q\geq 1$, \eqref{in3} and \eqref{in2} can be written as
\[(a+b)^q\le C_q(a^q+b^q) \quad  a,b\ge 0,\,q>0.\]
Now \eqref{in7} can be proved using Taylor's formula and Young's inequality:
\begin{align*}
(a+b)^q-a^q &\leq C_q(a^{q-1}+b^{q-1})b= (\theta q'a)^{q-1} \frac{C_q b}{(\theta q')^{q-1}}+C_qb^q\\
&\leq\theta a^q+\frac{1}{q}\Big(\frac{C_q}{(\theta q')^{q-1}}\Big)^q b^q +C_qb^q.
\end{align*}
We prove the  following inequality:
\beq\label{in1}
a^{q}-(a-b)^{q}\ge 2^{1-q}b^{q} \quad a\in\R,\,b\ge 0,\,q\ge 1.
\eeq
We can suppose $b>0$ and consider the function
\[f(t)=t^{q}-(t-b)^{q},\quad f'(t)=q(|t|^{q-1}-|t-b|^{q-1}).\]
Therefore $f$ is positive, increasing for $t>b$ and decreasing for $t<-b$ and thus it's coercive. Since $f'(t)=0$ if and only if $t=b/2$, its global minimum is $f(b/2)=2^{1-q}b^{q}$.
\vskip2pt
\noindent
Finally, we will use the following inequality, holding for all $A,B\subset\R^N$ with $A$ bounded and ${\rm dist}(A, B^c)=d>0$:
\beq\label{lkj}
|x-y|\geq C(A, B)(1+|y|), \quad x\in A,\,y\in B^c.
\eeq

\subsection{Weak and strong solutions}

\noindent
We compare in the following different notions of solutions for equations driven by $\psl$.

\begin{definition}
Let $\Omega$ be bounded, $u\in \widetilde{W}^{s,p}(\Omega)$ and $f\in W^{-s,p'}(\Omega)$. We say that $u$ is a {\em weak solution} of $\psl u=f$ in $\Omega$ if for all $\varphi\in W^{s,p}_0(\Omega)$
\[
\int_{\R^N\times\R^N}\frac{(u(x)-u(y))^{p-1}(\varphi(x)-\varphi(y))}{|x-y|^{N+ps}}\,dx\,dy=\langle f, \varphi\rangle_{s,p,\Omega}
\]
If $\Omega$ is unbounded, we say that $u\in \widetilde{W}^{s,p}_{\rm loc}(\Omega)$ solves $\psl u=f$ (with $f\in W^{-s, p'}(\Omega)$) weakly in $\Omega$ if it does so in any bounded open set $\Omega'\subseteq\Omega$.
\end{definition}

\noindent
The inequality $\psl u\leq f$ weakly in $\Omega$ will mean that
\[\int_{\R^N\times\R^N}\frac{(u(x)-u(y))^{p-1}(\varphi(x)-\varphi(y))}{|x-y|^{N+ps}}\,dx\,dy\le\langle f, \varphi\rangle_{s,p,\Omega}\]
for all $\varphi\in W^{s,p}_0(\Omega)$, $\varphi\ge 0$, and similarly for $\psl u\ge f$. Noticing that $\pm K\in W^{-s,p'}(\Omega)$ for any $K>0$ and any bounded $\Omega$, by $|\psl u|\le K$ weakly in $\Omega$ we mean that both $-K\le\psl u\le K$ weakly in $\Omega$.
\vskip2pt
\noindent
In the following proposition we will prove that $\psl u\in W^{-s,p'}(\Omega)$ if $u\in\widetilde{W}^{s,p}(\Omega)$, which implies that the previous definition makes sense.

\begin{lemma}
\label{remws}
Let $\Omega$ be bounded and $u\in \widetilde{W}^{s, p}(\Omega)$. Then the functional 
\[W^{s,p}_0(\Omega)\ni \varphi\mapsto ( u, \varphi):= \int_{\R^N\times\R^N}\frac{(u(x)-u(y))^{p-1}(\varphi(x)-\varphi(y))}{|x-y|^{N+ps}} \,dx\,dy\]
is finite and belongs to $W^{-s,p'}(\Omega)$.
\end{lemma}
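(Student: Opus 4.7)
The plan is to split the integral defining $(u,\varphi)$ along the partition of $\R^N\times\R^N$ induced by the auxiliary open set $U\Supset\Omega$ from Definition~\ref{defwtilde}. Since $\varphi\in W^{s,p}_0(\Omega)$ vanishes on $\Omega^c$, and hence on $U^c$, the integrand is identically zero on $U^c\times U^c$, while on the cross pieces $U\times U^c$ and $U^c\times U$ it is non-trivial only where the $U$-variable actually belongs to $\Omega$. Thus only three regions contribute: $U\times U$ and the cross regions $\Omega\times U^c$, $U^c\times\Omega$.

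On $U\times U$ I would apply H\"older's inequality with conjugate exponents $p/(p-1)$ and $p$ against the symmetric measure $|x-y|^{-N-ps}\,dx\,dy$, by factoring the integrand as
\[\frac{|u(x)-u(y)|^{p-1}}{|x-y|^{(N+ps)(p-1)/p}}\cdot\frac{|\varphi(x)-\varphi(y)|}{|x-y|^{(N+ps)/p}},\]
which yields the clean bound $[u]_{W^{s,p}(U)}^{p-1}\,[\varphi]_{s,p}$. For the cross-term on $\Omega\times U^c$ I use that $\varphi(y)=0$ and that $d:=\mathrm{dist}(\Omega,U^c)>0$, and split $|u(x)-u(y)|^{p-1}\le C(|u(x)|^{p-1}+|u(y)|^{p-1})$ via \eqref{in3} if $p\ge 2$ and \eqref{in2} if $1<p<2$. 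The $|u(x)|^{p-1}$-piece produces the factor $\int_{U^c}|x-y|^{-N-ps}\,dy\le Cd^{-ps}$, uniform in $x\in\Omega$, and is therefore dominated by $Cd^{-ps}\|u\|_{L^p(\Omega)}^{p-1}\|\varphi\|_{L^p(\Omega)}$ after a further H\"older step in~$x$. The $|u(y)|^{p-1}$-piece is treated using \eqref{lkj} to replace $|x-y|^{-N-ps}$ by a constant times $(1+|y|)^{-N-ps}$, so that the tail integrability built into Definition~\ref{defwtilde} makes the inner integral in $y$ finite, after which $\int_\Omega|\varphi(x)|\,dx\le|\Omega|^{1/p'}\|\varphi\|_{L^p(\Omega)}$.

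Combining the three pieces and invoking the fractional Poincar\'e inequality $\|\varphi\|_{L^p(\Omega)}\le C_\Omega[\varphi]_{s,p}$, valid on $W^{s,p}_0(\Omega)$ because $\Omega$ is bounded, produces an estimate of the form $|(u,\varphi)|\le C(u,\Omega,U)\,[\varphi]_{s,p}$, which simultaneously proves that the integral is absolutely convergent and that the associated linear functional belongs to $W^{-s,p'}(\Omega)$. The only genuinely delicate step is the cross-term on $\Omega\times U^c$: this is precisely where the non-locality of $\psl$ is felt, and where the tail integrability weight $|u(y)|^{p-1}(1+|y|)^{-N-ps}$ built into Definition~\ref{defwtilde} enters in an essential way, exactly compensating the decay of the kernel at infinity.
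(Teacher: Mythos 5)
Your proposal is correct and follows essentially the same route as the paper: split the double integral using the auxiliary set $U\Supset\Omega$ from Definition~\ref{defwtilde}, handle $U\times U$ by H\"older against the symmetric measure, and reduce the cross terms (via the support of $\varphi$) to $\Omega\times U^c$, where the tail-integrability weight and \eqref{lkj} do the work. The paper leaves the final reduction from $L^p(\Omega)$-continuity to $W^{s,p}_0(\Omega)$-continuity implicit, whereas you spell out the fractional Poincar\'e inequality — a minor expositional difference, not a different argument.
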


\begin{proof}
Let $U\Supset\Omega$ be such that
\beq\label{Uwtilde}
\|u\|_{W^{s,p}(U)}+\int_{\R^N}\frac{|u(x)|^{p-1}}{(1+|x|)^{N+ps}}\, dx<\infty,
\eeq
and write
\beq\label{<>}
\begin{split}
(u, \varphi)&=\int_{U\times U}\frac{(u(x)-u(y))^{p-1}(\varphi(x)-\varphi(y))}{|x-y|^{N+ps}} \,dx\,dy\\
&\quad +\int_{U\times U^c}\frac{(u(x)-u(y))^{p-1}\varphi(x)}{|x-y|^{N+ps}} \,dx\,dy\\
&\quad -\int_{U^c\times U}\frac{(u(x)-u(y))^{p-1}\varphi(y)}{|x-y|^{N+ps}}\,dx\,dy\\
&=\int_{U\times U}\frac{(u(x)-u(y))^{p-1}(\varphi(x)-\varphi(y))}{|x-y|^{N+ps}} \,dx\,dy\\
&\quad +2\int_{\Omega\times U^c}\frac{(u(x)-u(y))^{p-1}\varphi(x)}{|x-y|^{N+ps}} \,dx\,dy,
\end{split}
\eeq
since ${\rm supp}(\varphi)\subset\overline\Omega$.
The integral in $U\times U$ is finite and continuous with respect to strong convergence of $\varphi\in W^{s,p}_0(\Omega)$ since $u\in W^{s,p}(U)$. For the second term, observe that for a.e.\ $x\in \Omega$ it holds
\beq\label{hhh}
\begin{split}
&\int_{U^c}\frac{|u(x)-u(y)|^{p-1}}{|x-y|^{N+ps}}\,dy\\
&\leq C\Big(|u(x)|^{p-1}\int_{U^c}\frac{1}{|x-y|^{N+ps}}\,dy+\int_{U^c}\frac{|u(y)|^{p-1}}{(|x-y|)^{N+ps}} \,dy\Big)\\
&\leq C\Big(|u(x)|^{p-1}+\int_{\R^N}\frac{|u(y)|^{p-1}}{(1+|y|)^{N+ps}} \,dy\Big),
\end{split}
\eeq
where we used \eqref{lkj}
with $A=\Omega$ and $B=U$. The right hand side of \eqref{hhh} belongs to $L^{p'}(\Omega)$ since $\Omega$ is bounded and $u\in L^p(\Omega)$. Thus the second term in \eqref{<>} is continuous with respect to $L^p(\Omega)$-convergence of $\varphi$. Therefore it is also continuous in $W^{s,p}_0(\Omega)$.
\end{proof}
 
\begin{definition}[Point-wise and strong solutions]
Let $u\in \widetilde{W}^{s,p}_{\rm loc}(\Omega)$ and $f:\Omega\to \R$ be measurable. We say that $u$ is an {\em a.e.\ point-wise} solution of $\psl u=f$ in $\Omega$ if for a.a.\ Lebesgue point $x\in \Omega$ of $u$  it holds
\beq
\label{psl-strong.1}
2\, {\rm PV}\int_{\R^N}\frac{(u(x)-u(y))^{p-1}}{|x-y|^{N+ps}}\, dy=f(x).
\eeq
Moreover, for $f\in L^1_{\rm loc}(\Omega)$ we say that $u$ is a {\em strong} solution of $\psl u=f$ if
\beq
\label{psl-strongg}
2\int_{B_\eps^c(x)}\frac{(u(x)-u(y))^{p-1}}{|x-y|^{N+ps}}\, dy\to f\quad \text{strongly in $L^1_{\rm loc}(\Omega)$, as $\eps\to 0^+$.}
\eeq
\end{definition}

\noindent
Similar definitions are given for sub- and supersolutions. 
\vskip2pt
\noindent
Now we prove that a strong solution is also a weak solution. First, we introduce a more general result, which will be used in the following: we denote by ${\bf D}$ the diagonal of $\R^N\times\R^N$.

\begin{lemma}\label{symmset}
Let $u\in\widetilde{W}^{s,p}_{\rm loc}(\Omega)$. For all $\eps>0$ let $A_\eps\subset\R^N\times\R^N$ be a neighborhood of ${\bf D}$ which satisfies 
\begin{enumroman}
\item\label{symmset1}
$(x,y)\in A_\eps$ for all $(y,x)\in A_\eps$;
\item\label{symmset2}
${\rm dist}_H(A_\eps, {\bf D})\to 0$ as $\eps\to 0^+$.
\end{enumroman}
For all $x\in\R^N$ we set $A_\eps(x)=\{y\in\R^N:(x,y)\in A_\eps\}$ and
\[g_\eps(x)=\int_{A_\eps^c(x)}\frac{(u(x)-u(y))^{p-1}}{|x-y|^{N+ps}}\,dy.\]
If $2g_\eps\to f$ in $L^1_{\rm loc}(\Omega)$, then $u$ is a weak solution of $\psl u=f$ in $\Omega$.
\end{lemma}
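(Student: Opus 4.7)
Fix a bounded $\Omega'\Subset \Omega$ and a test function $\varphi\in C_c^\infty(\Omega')$; the plan is to verify the weak formulation for such $\varphi$, then extend to all of $W^{s,p}_0(\Omega')$ by density combined with the continuity of $\varphi\mapsto(u,\varphi)$ from Lemma \ref{remws}. Choose $U$ with $\Omega'\Subset U\Subset\Omega$, so that Lemma \ref{remws} supplies the decomposition $(u,\varphi)=I_1+I_2$, where
\[
I_1=\int_{U\times U}\frac{(u(x)-u(y))^{p-1}(\varphi(x)-\varphi(y))}{|x-y|^{N+ps}}\,dx\,dy
\]
is absolutely convergent (dominated, via H\"older, by $[u]_{s,p,U}^{p-1}[\varphi]_{s,p,U}$), and
\[
I_2 = 2\int_{\Omega'\times U^c}\frac{(u(x)-u(y))^{p-1}\varphi(x)}{|x-y|^{N+ps}}\,dx\,dy.
\]

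Next I would split $I_1$ at $A_\eps$. Since the diagonal ${\bf D}$ has zero Lebesgue measure in $\R^{2N}$ and property \ref{symmset2} forces the indicator of $A_\eps\cap(U\times U)$ to vanish pointwise off ${\bf D}$, dominated convergence gives $\int_{A_\eps\cap(U\times U)}\to 0$ as $\eps\to 0^+$. On $A_\eps^c\cap(U\times U)$, which is symmetric under $(x,y)\leftrightarrow(y,x)$ by \ref{symmset1}, I would exploit the antisymmetry of $(u(x)-u(y))^{p-1}/|x-y|^{N+ps}$ to pass to the single-$\varphi$ form and drop the $(U\setminus\Omega')\times U$ piece (where $\varphi\equiv 0$):
\[
\int_{A_\eps^c\cap(U\times U)}\frac{(u(x)-u(y))^{p-1}(\varphi(x)-\varphi(y))}{|x-y|^{N+ps}}\,dx\,dy
= 2\int_{A_\eps^c\cap(\Omega'\times U)}\frac{(u(x)-u(y))^{p-1}\varphi(x)}{|x-y|^{N+ps}}\,dx\,dy.
\]
For $I_2$, the strict containment $\Omega'\Subset U$ together with \ref{symmset2} gives $\Omega'\times U^c\subseteq A_\eps^c$ once $\eps$ is sufficiently small. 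Summing the two contributions and applying Fubini (admissible at fixed $\eps$ in view of the tail estimate \eqref{hhh} used in Lemma \ref{remws}) yields
\[
(u,\varphi)=2\int_{\Omega'}\varphi(x)\,g_\eps(x)\,dx+o(1)\quad\text{as }\eps\to 0^+.
\]
Since $\varphi\in L^\infty$ and $2g_\eps\to f$ in $L^1(\Omega')$ by hypothesis, letting $\eps\to 0^+$ produces the weak identity $(u,\varphi)=\int_\Omega f\varphi\,dx$, as required.

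The step I expect to be most delicate is the domain bookkeeping: one must verify that $[A_\eps^c\cap(\Omega'\times U)]\cup[A_\eps^c\cap(\Omega'\times U^c)]$ coincides with $A_\eps^c\cap(\Omega'\times\R^N)$ and that no stray contribution from $(U\setminus\Omega')\times U$ survives. This is where ${\rm supp}\,\varphi\subseteq\overline{\Omega'}$ and the strict containment $\Omega'\Subset U$ are essential: the auxiliary scale $U$ is what allows the cutoff at $A_\eps$ to be performed cleanly on the near part, while the far part $\Omega'\times U^c$ already lies outside $A_\eps$ for small $\eps$ and therefore requires no cutoff at all.
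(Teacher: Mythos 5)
Your overall strategy matches the paper's: use the integrability of the full double integral from Lemma~\ref{remws}, apply dominated convergence via hypothesis~\ref{symmset2} to restrict to $A_\eps^c$, use the symmetry~\ref{symmset1} together with the antisymmetry of $(u(x)-u(y))^{p-1}$ to pass to the single-$\varphi$ form, invoke Fubini to recognize $2\int\varphi\,g_\eps$, send $\eps\to 0^+$, and finish with density and the continuity of $\varphi\mapsto(u,\varphi)$. The extra bookkeeping through the $I_1+I_2$ split is harmless and amounts to what Lemma~\ref{remws} already does.

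There is, however, one step you have elided that the paper treats explicitly and that your appeal to the tail estimate~\eqref{hhh} does not cover. The manipulation
\[
\int_{A_\eps^c\cap(U\times U)}\frac{(u(x)-u(y))^{p-1}(\varphi(x)-\varphi(y))}{|x-y|^{N+ps}}\,dx\,dy
= 2\int_{A_\eps^c\cap(\Omega'\times U)}\frac{(u(x)-u(y))^{p-1}\varphi(x)}{|x-y|^{N+ps}}\,dx\,dy
\]
splits $\varphi(x)-\varphi(y)$ into two separate integrals and swaps variables; this, as well as the subsequent Fubini step, requires the split pieces to be \emph{absolutely} convergent on the near part $A_\eps^c\cap({\rm supp}\,\varphi\times U)$. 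This is not automatic from the $L^1(U\times U)$ bound on the original (differenced) integrand: one needs that the reduced domain stays at a uniform positive distance from the diagonal. The paper establishes exactly this with a covering argument: since $A_\eps$ is a neighborhood of ${\bf D}$, for each $x$ in the compact set $K={\rm supp}\,\varphi$ there is $\rho_x>0$ with $B_{\rho_x}(x)\subset A_\eps(x)$, and compactness yields a uniform $\rho=\rho(\eps)>0$, so that $|x-y|\ge\rho$ on $A_\eps^c\cap(K\times U)$; this is what gives $g_\eps\in L^1(K)$ and legitimizes both the antisymmetrization and Fubini. Your estimate \eqref{hhh} from Lemma~\ref{remws} only controls the far part $\Omega'\times U^c$. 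Add the covering argument (or an equivalent uniform-separation statement) and the proof is complete and in line with the paper's.
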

\begin{proof}
We can suppose that $\Omega$ is bounded and let $U\Supset\Omega$ be such that \eqref{Uwtilde} holds for $u$, fix $\varphi\in C^\infty_c(\Omega)$ and let $K={\rm supp}(\varphi)$. First we prove that $g_\eps\in L^1(K)$. For all $x\in K$ there exists $\rho>0$ such that $B_\rho(x)\subset A_\eps(x)$, and by a covering argument we may choose $\rho$ independent of $x$ (while $\rho$ depends on $\eps$). Moreover, for all $x\in K$ and $y\in A_\eps^c(x)$ we have $|x-y|\ge C(1+|y|)$ (see \eqref{lkj}). So we can compute
\begin{align*}
\int_K|g_\eps(x)|\,dx &\le C\int_K\int_{A_\eps^c(x)}\frac{|u(x)|^{p-1}}{|x-y|^{N+ps}}\,dy\,dx+C\int_K\int_{A_\eps^c(x)}\frac{|u(y)|^{p-1}}{|x-y|^{N+ps}}\,dy\,dx \\
&\le C\int_K|u(x)|^{p-1}\,dx\int_{B_\rho^c}\frac{1}{|z|^{N+ps}}\,dz+C\int_K\int_{A^c_\eps(x)}\frac{|u(y)|^{p-1}}{(1+|y|)^{N+ps}}\,dy\\
&\le C_\eps\int_U|u(x)|^{p-1}\,dx+C|K|\int_{\R^N}\frac{|u(y)|^{p-1}}{(1+|y|)^{N+ps}}\,dy < \infty.
\end{align*}
Lemma \ref{remws} shows that 
\[\frac{(u(x)-u(y))^{p-1}(\varphi(x)-\varphi(y))}{|x-y|^{N+ps}}\in L^1(\R^N\times \R^N)\]
and thus, through \ref{symmset1}, \ref{symmset2}, and Fubini's theorem we have
\begin{align*}
&\int_{\R^N\times\R^N}\frac{(u(x)-u(y))^{p-1}(\varphi(x)-\varphi(y))}{|x-y|^{N+ps}}\,dx\,dy\\
&\underset{(ii)}{=} \lim_{\eps\to 0^+}\int_{A_\eps^c}\frac{(u(x)-u(y))^{p-1}(\varphi(x)-\varphi(y))}{|x-y|^{N+ps}}\,dy\,dx\\
&\underset{(i)}{=}\lim_{\eps\to 0^+}2\int_K\int_{A_\eps^c(x)}\frac{(u(x)-u(y))^{p-1}}{|x-y|^{N+ps}}\varphi(x)\,dx\,dy\\
&=\lim_{\eps\to 0^+}2\int_Kg_\eps(x)\varphi(x)\,dx.
\end{align*}
Since $2g_\eps\to f$ in $L^1(K)$, the density of $C^\infty_c(\Omega)$ in $W^{s,p}_0(\Omega)$ and Lemma \ref{remws} give the assertion.
\end{proof}

\begin{remark}
As the proof shows, it suffices to assume that the convergence in \eqref{psl-strongg} be in $L^1_{\rm loc}(\Omega)$ weakly. We deliberately choose to assume strong $L^1_{\rm loc}$-convergence since in all subsequent applications this is enough.
\end{remark}

\begin{corollary}
\label{simplw}
Let $u\in \widetilde{W}^{s,p}_{\rm loc}(\Omega)$ be a strong solution of $\psl u=f$ in $\Omega$, with $f\in L^1_{\rm loc}(\Omega)$. Then $u$ is a weak solution of $\psl u=f$ in $\Omega$.
\end{corollary}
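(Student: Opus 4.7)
The corollary is a direct specialization of Lemma \ref{symmset}, and the plan is simply to exhibit a family $\{A_\eps\}$ of symmetric neighborhoods of the diagonal ${\bf D}$ for which the excised integrals in \eqref{psl-strongg} coincide with the $g_\eps$ in Lemma \ref{symmset}, and then invoke that lemma.

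First I would take the natural choice
\[
A_\eps = \bigl\{(x,y)\in\R^N\times\R^N : |x-y|<\eps\bigr\},
\]
so that $A_\eps(x) = B_\eps(x)$ and $A_\eps^c(x) = B_\eps^c(x)$. With this choice,
\[
g_\eps(x) = \int_{A_\eps^c(x)}\frac{(u(x)-u(y))^{p-1}}{|x-y|^{N+ps}}\,dy = \int_{B_\eps^c(x)}\frac{(u(x)-u(y))^{p-1}}{|x-y|^{N+ps}}\,dy,
\]
which matches exactly the quantity appearing in \eqref{psl-strongg}. By the assumption that $u$ is a strong solution, $2g_\eps\to f$ in $L^1_{\rm loc}(\Omega)$.

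It remains to check the two structural hypotheses on $A_\eps$. Symmetry \ref{symmset1} is immediate since $|x-y| = |y-x|$. For \ref{symmset2}, note that $A_\eps\supseteq{\bf D}$, so $\sup_{(z,z)\in{\bf D}}{\rm dist}((z,z),A_\eps)=0$, while for $(x,y)\in A_\eps$ the nearest point on ${\bf D}$ is $\bigl((x+y)/2,(x+y)/2\bigr)$, giving ${\rm dist}((x,y),{\bf D})=|x-y|/\sqrt 2<\eps/\sqrt 2$. Hence ${\rm dist}_H(A_\eps,{\bf D})\to 0$ as $\eps\to 0^+$.

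With both hypotheses of Lemma \ref{symmset} verified and the $L^1_{\rm loc}$-convergence in hand, the lemma yields that $u$ is a weak solution of $\psl u=f$ in $\Omega$. There is no genuine obstacle here, since all the analytical work (the Fubini argument and the tail estimates needed to integrate the symmetrized kernel against test functions in $W^{s,p}_0(\Omega)$) has already been carried out in Lemma \ref{symmset}; the corollary only records the fact that the concentric ball excision used in \eqref{psl-strongg} is an admissible instance of that general scheme.
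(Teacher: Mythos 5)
Your proof is correct and is exactly the paper's own one-line argument: apply Lemma~\ref{symmset} with $A_\eps=\{(x,y):|x-y|<\eps\}$. The verification of \ref{symmset1} and \ref{symmset2} that you spell out is routine and implicit in the paper.
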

\begin{proof}
It follows from Lemma \ref{symmset} with $A_\eps=\{(x,y)\in\R^N\times\R^N:\,|x-y|<\eps\}.$
\end{proof}

\subsection{Some basic properties of $\psl$}

The following result describes a fundamental non-local feature of $\psl$.
\begin{lemma}[Non-local behavior of $\psl$]
\label{psadd}
Suppose $u\in \widetilde{W}^{s,p}_{\rm loc}(\Omega)$ solves $\psl u=f$ weakly, strongly or point-wisely in $\Omega$ for some $f\in L^1_{\rm loc}(\Omega)$. Let $v\in L^1_{\rm loc}(\R^N)$ be such that 
\beq
\label{hyppertv}
{\rm dist}({\rm supp}(v),\Omega)>0,\quad \int_{\Omega^c}\frac{|v(x)|^{p-1}}{(1+|x|)^{N+ps}}\, dx<\infty,
\eeq
and define for a.e.\ Lebesgue point $x\in \Omega$ of $u$
\[h(x)=2\int_{{\rm supp}(v)}\frac{(u(x)-u(y)-v(y))^{p-1}-(u(x)-u(y))^{p-1}}{|x-y|^{N+ps}}\,dy.\]
Then $u+v\in \widetilde{W}^{s,p}_{\rm loc}(\Omega)$ and it solves $\psl(u+v)=f+h$ weakly, strongly or pointwisely respectively in $\Omega$.
\end{lemma}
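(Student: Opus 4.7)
The plan is to reduce everything to the fact that, since $d:=\mathrm{dist}(\mathrm{supp}(v),\Omega)>0$, on any $U\Supset\Omega$ with $U\cap\mathrm{supp}(v)=\emptyset$ we have $u+v=u$, and we may use \eqref{lkj} with $A=\Omega$, $B=U$ whenever $y\in\mathrm{supp}(v)$. In particular $u+v\in\widetilde{W}^{s,p}_{\rm loc}(\Omega)$ is immediate: on $U$ one has $\|u+v\|_{W^{s,p}(U)}=\|u\|_{W^{s,p}(U)}<\infty$, while the tail is controlled by $|u|^{p-1}/(1+|x|)^{N+ps}$ on $\Omega^c\setminus\mathrm{supp}(v)$ and by $C(|u|^{p-1}+|v|^{p-1})/(1+|x|)^{N+ps}$ on $\mathrm{supp}(v)$, both in $L^1$ by hypothesis.

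Next I would show $h\in L^1_{\rm loc}(\Omega)$. For $x\in\Omega$ and $y\in\mathrm{supp}(v)$ one has $|x-y|\geq d$ and $|x-y|\geq C(1+|y|)$. Using \eqref{in4} when $p\geq 2$ and \eqref{in2} when $1<p\leq 2$, a direct estimate gives, in either case,
\[|(u(x)-u(y)-v(y))^{p-1}-(u(x)-u(y))^{p-1}|\leq C\bigl(|u(x)|^{p-1}+|u(y)|^{p-1}+|v(y)|^{p-1}\bigr),\]
which, divided by $|x-y|^{N+ps}$ and integrated in $y\in\mathrm{supp}(v)$, yields a bound of the form
\[|h(x)|\leq C_\Omega\Bigl(|u(x)|^{p-1}+\int_{\R^N}\frac{|u(y)|^{p-1}+|v(y)|^{p-1}}{(1+|y|)^{N+ps}}\,dy\Bigr),\]
so that $h\in L^1_{\rm loc}(\Omega)$.

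For the pointwise case, I would fix a Lebesgue point $x\in\Omega$ of $u$. Since $x\notin\mathrm{supp}(v)$, $(u+v)(x)=u(x)$; splitting the domain of integration into $\mathrm{supp}(v)$ and its complement, and writing $(u(x)-u(y)-v(y))^{p-1}=(u(x)-u(y))^{p-1}+[(u(x)-u(y)-v(y))^{p-1}-(u(x)-u(y))^{p-1}]$ on $\mathrm{supp}(v)$, the principal value over $\mathrm{supp}(v)^c$ can be extended to all of $\R^N$ (the correction on $\mathrm{supp}(v)$ is absolutely convergent by the previous bound), giving $\psl(u+v)(x)=\psl u(x)+h(x)=f(x)+h(x)$. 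For the strong case one runs the same decomposition with $B_\eps^c(x)$ in place of $\R^N$: as soon as $\eps<d$ and $x\in\Omega$, $\mathrm{supp}(v)\subset B_\eps^c(x)$ and the correction term is exactly $h(x)/2$ independently of $\eps$, so the $L^1_{\rm loc}$-convergence of the truncations for $u$ transfers to $u+v$.

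The main obstacle is the weak case, which must be proved directly and not simply through Corollary \ref{simplw}. Given $\varphi\in W^{s,p}_0(\Omega)$, I would split $\R^N\times\R^N$ according to whether each variable lies in $B:=\mathrm{supp}(v)$ or in $A:=B^c$. Since $\mathrm{supp}(\varphi)\subset\Omega\subset A$, the contribution from $B\times B$ vanishes, on $A\times A$ one has $(u+v)(x)-(u+v)(y)=u(x)-u(y)$, and the $A\times B$ and $B\times A$ pieces combine by symmetry into $2\int_{A\times B}\frac{(u(x)-u(y)-v(y))^{p-1}\varphi(x)}{|x-y|^{N+ps}}\,dx\,dy$. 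Subtracting the analogous decomposition for $\int\frac{(u(x)-u(y))^{p-1}(\varphi(x)-\varphi(y))}{|x-y|^{N+ps}}=\langle f,\varphi\rangle_{s,p,\Omega}$, the $A\times A$ pieces cancel, $\varphi(x)=0$ for $x\in A\setminus\Omega$, and Fubini (justified by the $L^1_{\rm loc}$-bound on $h$ above together with $\varphi\in L^\infty$, or by density with $\varphi\in C^\infty_c(\Omega)$) yields exactly $\int_\Omega h\varphi\,dx$, which is the required identity. The delicate point here is verifying absolute integrability of the double integral on $\Omega\times B$ so that the symmetrization and the application of Fubini are legitimate, and this is precisely what the estimate on $h$ provides.
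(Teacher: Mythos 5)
Your proof is correct and follows essentially the same route as the paper's: the tail and $W^{s,p}(U)$ estimates for $u+v$, the $L^1_{\rm loc}$ control of $h$ (which the paper leaves more implicit), the direct manipulation of principal values for the strong/pointwise cases, and the splitting of the double integral plus Fubini for the weak case are all the paper's ingredients. The only cosmetic difference is that you split by $\mathrm{supp}(v)$ and its complement rather than $\Omega$ and $\Omega^c$ as the paper does, which is a slightly cleaner bookkeeping choice but not a different method.
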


\begin{proof}
As usual, it suffices to consider the case $\Omega$ bounded, and we first prove that $u+v\in \widetilde{W}^{s,p}(\Omega)$. 
Let $K={\rm supp}(v)$ and $U$ be such that \eqref{Uwtilde} holds for $u$, and suppose without loss of generality that $\Omega\Subset U\Subset K^c$. Clearly $u+v=u$ in $U$, and thus it belongs to $W^{s,p}(U)$. Moreover
\[\int_{\R^N}\frac{|u(x)+v(x)|^{p-1}}{(1+|x|)^{N+ps}}\, dx\leq C\Big(\int_{\R^N}\frac{|u(x)|^{p-1}}{(1+|x|)^{N+ps}}\, dx+\int_{K}\frac{|v(x)|^{p-1}}{(1+|x|)^{N+ps}}\, dx\Big),\]
and the last term is finite due to \eqref{hyppertv}.
With a similar estimate, we see that the integral defining $h$ is finite (due also to \eqref{hyppertv} and  \eqref{lkj}).
Consider now the case where $\psl u=f$ weakly.
Choose $\varphi\in C^\infty_c(\Omega)$ and compute
\begin{align*}
&\int_{\R^N\times\R^N}\frac{(u(x)+v(x)-u(y)-v(y))^{p-1}(\varphi(x)-\varphi(y))}{|x-y|^{N+ps}}\,dx\,dy \\
&=\int_{\Omega\times \Omega}\frac{(u(x)-u(y))^{p-1}(\varphi(x)-\varphi(y))}{|x-y|^{N+ps}}\,dx\,dy \\
&\quad + \int_{\Omega\times \Omega^c}\frac{(u(x)-u(y)-v(y))^{p-1}\varphi(x)}{|x-y|^{N+ps}}\,dx\,dy\\
&\quad -\int_{\Omega^c\times \Omega}\frac{(u(x)+v(x)-u(y))^{p-1}\varphi(y)}{|x-y|^{N+ps}}\,dx\,dy \\
&= \int_{\R^N\times \R^N}\frac{(u(x)-u(y))^{p-1}(\varphi(x)-\varphi(y))}{|x-y|^{N+ps}}\,dx\,dy\\
&\quad -\int_{\Omega \times \Omega^c}\frac{(u(x)-u(y))^{p-1}\varphi(x)}{|x-y|^{N+ps}}\,dx\,dy \\
&\quad +\int_{\Omega^c\times \Omega}\frac{(u(x)-u(y))^{p-1}\varphi(y)}{|x-y|^{N+ps}}\,dx\,dy+2\int_{\Omega\times \Omega^c}\frac{(u(x)-u(y)-v(y))^{p-1}\varphi(x)}{|x-y|^{N+ps}}\,dx\,dy \\
&= \int_\Omega f(x)\varphi(x)\,dx+2\int_{\Omega\times \Omega^c}\frac{(u(x)-u(y)-v(y))^{p-1}-(u(x)-u(y)))^{p-1}}{|x-y|^{N+ps}}\varphi(x)\,dx\,dy \\
&= \int_\Omega(f(x)+h(x))\varphi\,dx,
\end{align*}
where in the end we have used Fubini's theorem. The density of $C^\infty_c(\Omega)$ in $W^{s,p}_0(\Omega)$ allows to conclude.
\vskip2pt
\noindent
Suppose now that $\psl u=f$ strongly or pointwisely in $\Omega$. Let for $x\in V\Subset\Omega$ and $\eps<{\rm dist}(V, \Omega^c)$
\[g_\eps(x)=\int_{B^c_\eps(x)}\frac{(u(x)+v(x)-u(y)-v(y))^{p-1}}{|x-y|^{N+ps}}\,dy.\] 
Using \eqref{hyppertv} we get
\begin{align*}
&g_\eps(x)=\int_{\Omega\setminus B_\eps(x)}\frac{(u(x)-u(y))^{p-1}}{|x-y|^{N+ps}}\,dy+\int_{\Omega^c}\frac{(u(x)-u(y)-v(y))^{p-1}}{|x-y|^{N+ps}}\,dy\\
&= \int_{B_\eps^c(x)}\frac{(u(x)-u(y))^{p-1}}{|x-y|^{N+ps}}\,dy+\int_{K}\frac{(u(x)-u(y)-v(y))^{p-1}-(u(x)-u(y))^{p-1}}{|x-y|^{N+ps}}\,dy.
\end{align*}
Taking the limit for $\eps\to 0^+$ gives the claim in the pointwise case. To show that $\psl (u+v)=f+h$ strongly it suffices to show that
\[x\mapsto \int_{K}\frac{(u(x)-u(y)-v(y))^{p-1}-(u(x)-u(y))^{p-1}}{|x-y|^{N+ps}}\,dy\]
belongs to $L^1(K)$, which can be  done proceeding as in \eqref{hhh} and using \eqref{hyppertv} for the term involving $v$. 
\end{proof}

\noindent
We also recall the well known homogeneity, scaling, and rotational invariance properties of $\psl$. For all $\rho>0$, $M\in O_N$ (the orthogonal group), $v$ measurable, $\Omega\subseteq\R^N$, set
\begin{align*}
v_\rho(x)& =v(\rho x),\quad \rho^{-1}\Omega=\{x/\rho:x\in \Omega\},  \\
v_M(x)&=v(Mx),\quad M^{-1}\Omega =\{M^{-1}x:x\in \Omega\}.
\end{align*}
\begin{lemma}\label{hs}
Let $u\in \widetilde{W}^{s,p}_{\rm loc}(\Omega)$ satisfy $\psl u=f$ weakly in $\Omega$ for some $f\in L^{1}_{\rm loc}(\Omega)$. Then we have
\begin{enumroman}
\item\label{hs.1} for all $h>0$, $\psl (hu)=h^{p-1}f$ weakly in $\Omega$;
\item\label{hs.2} for all $\rho>0$, $u_\rho\in \widetilde{W}^{s,p}(\rho^{-1}\Omega)$ and $\psl u_\rho=\rho^{ps}f_\rho$ weakly in $\rho^{-1}\Omega$;
\item\label{hs.3} for all $M\in O_N$, $u_M\in \widetilde{W}^{s,p}(M^{-1}\Omega)$ and  $\psl u_M=f_M$ weakly in $M^{-1}\Omega$.
\end{enumroman}
\end{lemma}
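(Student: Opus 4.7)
The plan is to derive all three identities by plugging the appropriate rescaled function into the weak formulation for $u$ and performing a change of variables. None of the three assertions requires a new idea; the main bookkeeping is just to track the scaling exponents carefully.

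For \ref{hs.1}, since $h>0$ gives $(hc)^{p-1}=h^{p-1}c^{p-1}$ under the convention $a^q=|a|^{q-1}a$, one simply factors $h^{p-1}$ out of the double integral defining $(hu,\varphi)$. The assertion $hu\in \widetilde{W}^{s,p}_{\rm loc}(\Omega)$ is obvious by linearity, so this part is immediate.

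For \ref{hs.2}, given a test function $\varphi\in W^{s,p}_0(\rho^{-1}\Omega)$, I would set $\psi(x)=\varphi(x/\rho)$, which belongs to $W^{s,p}_0(\Omega)$, and evaluate the bilinear form on $u_\rho$:
\[\int_{\R^N\times\R^N}\frac{(u_\rho(x)-u_\rho(y))^{p-1}(\varphi(x)-\varphi(y))}{|x-y|^{N+ps}}\,dx\,dy\]
by substituting $x\mapsto x/\rho$, $y\mapsto y/\rho$. The kernel contributes a factor $\rho^{N+ps}$, the measure contributes $\rho^{-2N}$, and the numerator becomes $(u(x)-u(y))^{p-1}(\psi(x)-\psi(y))$. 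This yields a factor $\rho^{ps-N}$ times $\langle f,\psi\rangle_{s,p,\Omega}$. Expressing $\langle f,\psi\rangle_{s,p,\Omega}=\int_\Omega f(x)\varphi(x/\rho)\,dx$ and changing variable once more eats another $\rho^{N}$, leaving $\rho^{ps}\int_{\rho^{-1}\Omega}f_\rho\varphi\,dx$, as desired. The membership $u_\rho\in\widetilde{W}^{s,p}(\rho^{-1}\Omega)$ is obtained by the same change of variable applied to the Gagliardo seminorm of $u$ on a neighborhood $U$ of $\Omega$, and to the tail integral $\int_{\R^N}|u(x)|^{p-1}(1+|x|)^{-N-ps}\,dx$, which remain finite after rescaling.

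For \ref{hs.3}, the argument is even simpler because $M\in O_N$ is an isometry with $|\det M|=1$: the substitution $x\mapsto Mx$, $y\mapsto My$ leaves $|x-y|^{N+ps}$ and the Lebesgue measure invariant. Taking $\psi(x)=\varphi(M^{-1}x)\in W^{s,p}_0(\Omega)$ as test function for the equation of $u$ and then undoing the change of variable in $\int_\Omega f\psi$ yields $\int_{M^{-1}\Omega}f_M\varphi\,dx$ on the nose. The $\widetilde{W}^{s,p}$ condition is preserved identically. The only conceptual remark worth stating is that the exponent $\rho^{ps}$ in \ref{hs.2} is dictated by the kernel's homogeneity of order $-(N+ps)$, which is what makes the double integral pick up precisely $\rho^{ps-N}$ before the unit-weight pairing with $f$ contributes its own $\rho^{N}$; so if an obstacle exists at all, it is only ensuring that sign/exponent counts are consistent.
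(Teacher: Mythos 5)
Your argument is correct. The paper states this lemma as ``well known'' without proof, and your change-of-variables computation (factoring $h^{p-1}$ out for \ref{hs.1}, substituting $x\mapsto x/\rho$ with the correct bookkeeping $\rho^{-2N}\cdot\rho^{N+ps}=\rho^{ps-N}$ for \ref{hs.2}, and using $|\det M|=1$ and isometry for \ref{hs.3}) is exactly the standard verification the authors had in mind. The only point worth making slightly more explicit is the tail-integral bound in \ref{hs.2}: after the substitution one gets $\rho^{-N}\int_{\R^N}|u(X)|^{p-1}(1+|X|/\rho)^{-N-ps}\,dX$, and one should note that $1+|X|/\rho\geq\min\{1,1/\rho\}(1+|X|)$, which supplies the constant needed to compare with the original tail integral; but this is a one-line observation and does not affect the validity of the proof.
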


\noindent
Finally, from Lemma 9 of \cite{LL} we have the following comparison principle for $\psl$.

\begin{proposition}[Comparison Principle]\label{comp}
Let $\Omega$ be bounded, $u,v\in \widetilde{W}^{s,p}(\Omega)$ satisfy $u\le v$ in $\Omega^c$ and, for all $\varphi\in W^{s,p}_0(\Omega)$, $\varphi\ge 0$ in $\Omega$,
\begin{align*}
&\int_{\R^N\times\R^N}\frac{(u(x)-u(y))^{p-1}(\varphi(x)-\varphi(y))}{|x-y|^{N+ps}}\,dx\,dy\\
&\le \int_{\R^N\times\R^N}\frac{(v(x)-v(y))^{p-1}(\varphi(x)-\varphi(y))}{|x-y|^{N+ps}}\,dx\,dy.
\end{align*}
Then $u\le v$ in $\Omega$.
\end{proposition}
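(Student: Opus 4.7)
The plan is to test the weak inequality with the admissible function $\varphi := (u-v)_+$ and exploit the strict monotonicity of the scalar map $t\mapsto t^{p-1}$. First I would verify that $\varphi\in W^{s,p}_0(\Omega)$ is indeed a valid non-negative test function: since $u,v\in\widetilde{W}^{s,p}(\Omega)$, their difference lies in $W^{s,p}(\Omega')$ for some $\Omega'\Supset\Omega$; the Lipschitz truncation $t\mapsto t_+$ preserves this membership, and the hypothesis $u\le v$ in $\Omega^c$ forces $\varphi\equiv 0$ outside $\Omega$. Subtracting the inequality for $v$ from that for $u$ then gives
\beq\label{tested-comp}
I := \int_{\R^N\times\R^N}\frac{\bigl[(u(x)-u(y))^{p-1}-(v(x)-v(y))^{p-1}\bigr]\,\bigl[\varphi(x)-\varphi(y)\bigr]}{|x-y|^{N+ps}}\,dx\,dy \le 0.
\eeq

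Next I would prove pointwise non-negativity of the integrand in \eqref{tested-comp}. Setting $a = u(x)-u(y)$, $b = v(x)-v(y)$, $\psi := u-v$, so that $a-b=\psi(x)-\psi(y)$, I would argue by cases on the signs of $\psi(x), \psi(y)$. If both are non-negative, $\varphi(x)-\varphi(y)=a-b$, and the integrand is $(a^{p-1}-b^{p-1})(a-b)\ge 0$ by strict monotonicity of $t\mapsto t^{p-1}$. If both are negative, $\varphi(x)=\varphi(y)=0$. In the mixed case $\psi(x)\ge 0 > \psi(y)$, one has $\varphi(x)-\varphi(y)=\psi(x)\ge 0$ and $a-b=\psi(x)-\psi(y)>\psi(x)\ge 0$, so $a>b$ and hence $a^{p-1} > b^{p-1}$, making the integrand non-negative (and strictly positive whenever $\psi(x)>0$); the other mixed case is symmetric.

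Since the integrand is non-negative yet integrates to a non-positive quantity, it must vanish almost everywhere. Inspecting the cases, each mixed configuration is \emph{strictly} positive whenever $\varphi(x)\neq\varphi(y)$, which forces $|\{\psi>0\}|\cdot|\{\psi<0\}|=0$; in the same-sign case with both $\psi(x),\psi(y)\ge 0$, vanishing forces $a=b$, i.e.\ $\psi(x)=\psi(y)$. If $|\{\psi>0\}|=0$ we are done. Otherwise $\psi\ge 0$ a.e.\ on $\R^N$ and is a.e.\ constant; combined with $\psi\le 0$ on the unbounded set $\Omega^c$ the constant must be $0$, hence $u\le v$ a.e. The main technical subtlety I foresee is a clean handling of the mixed-sign case together with the justification that $(u-v)_+$ belongs to $W^{s,p}_0(\Omega)$ when $u,v$ only lie in $\widetilde{W}^{s,p}(\Omega)$; both are essentially standard but warrant explicit verification.
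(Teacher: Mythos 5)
Correct, and essentially the same route as the paper: the paper also tests the weak inequality against $\varphi=(u-v)_+$ and then delegates the pointwise monotonicity algebra you carry out to Lemma~9 of \cite{LL}, itself verifying only the two prerequisites you flag at the end, namely finiteness of both tested integrals (which follows from Lemma~\ref{remws}) and $\varphi\in W^{s,p}_0(\Omega)$. For the latter, note that your gloss (Lipschitz truncation of $u-v\in W^{s,p}(U)$, plus $\varphi\equiv 0$ on $\Omega^c$) only gives $\varphi\in W^{s,p}(U)$ for some $U\Supset\Omega$; the paper closes the gap by writing $[\varphi]_{s,p}^p$ as the $U\times U$ integral (finite since $W^{s,p}(U)$ is a lattice) plus the cross term $2\int_{\Omega\times U^c}|\varphi(x)|^p|x-y|^{-N-ps}\,dx\,dy$, which is non-singular by \eqref{lkj} and is bounded by $C(\|u\|_{L^p(\Omega)}^p+\|v\|_{L^p(\Omega)}^p)$, so you should not skip this estimate.
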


\begin{proof}
The proof follows by the arguments of \cite{LL}. It is sufficient to know that both sides 
of the inequality are finite and $(u-v)_+\in W^{s,p}_0(\Omega)$, which is used there as a test function.
By Lemma \ref{remws}, both sides are finite. We claim that $w:=(u-v)_+\in W^{s,p}_0(\Omega)$. Let $U\Supset\Omega$ be as in Definition \ref{defwtilde} for both $u$ and $v$. We split the Gagliardo norm in $\R^N$ as
\begin{align*}
&\int_{\R^N\times\R^N}\frac{|w(x)-w(y)|^p}{|x-y|^{N+ps}}\, dx\, dy\\
&= \int_{U\times U}\frac{|w(x)-w(y)|^p}{|x-y|^{N+ps}}\, dx\, dy+2\int_{\Omega\times U^c}\frac{|w(x)|^p}{|x-y|^{N+ps}}\, dx\, dy
\end{align*}
where we used that $w=0$ in $\Omega^c$ by assumption.
The first term is bounded since $u, v\in W^{s,p}(U)$, which is a lattice. The second term is non-singular since ${\rm dist}(\Omega, U^c)>0$ and using  \eqref{lkj}  we get
\begin{align*}
\int_{\Omega\times U^c}\frac{|w(x)|^p}{|x-y|^{N+ps}}\,dx\,dy
&\leq C_{\Omega, U}
\int_\Omega (|u(x)|^{p}+ |v(x)|^{p})\, dx\int_{\R^N}\frac{1}{(1+|y|)^{N+ps}}\, dy \\
& \leq C_{\Omega, U}\int_\Omega (|u(x)|^{p}+ |v(x)|^{p})\, dx,
\end{align*}
which proves the claim. 
\end{proof}

\subsection{$\psl$ on smooth functions}

Next we show that in the class of sufficiently smooth functions, the $s$-fractional $p$-Laplacian exists strongly (and thus weakly) and is locally bounded. First we recall the following definition of $\psl$, equivalent to \eqref{psl-strong.1} (by a simple change of variable):
\beq\label{psl-strong}
\psl u(x) = {\rm PV}\int_{\R^N}\frac{(u(x)-u(x+z))^{p-1}+(u(x)-u(x-z))^{p-1}}{|z|^{N+ps}}\,dz.
\eeq
Our first lemma displays an estimate which allows us to remove the singularity at $0$, when $u$ is smooth enough:

\begin{lemma}\label{zero}
If $u\in C^{1,\gamma}_{\rm loc}(\Omega)$, $\gamma\in [0,1]$, and $K\subset\Omega$ is compact, then there exist $C_{K, u},R_K>0$ 
such that for all $x\in K$, $z\in B_{R_K}$
\begin{equation*}
\big|(u(x)-u(x+z))^{p-1}+(u(x)-u(x-z))^{p-1}\big| 
\leq 
\begin{cases}
 C_{K, u}|z|^{\gamma+p-1}  & \text{if $p\geq 2$}\\
 C_{K, u}|z|^{(\gamma+1)(p-1)} & \text{if $p< 2$}.
\end{cases}
\end{equation*}
\end{lemma}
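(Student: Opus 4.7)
The plan is to exploit cancellation at the linear order in $z$: a first-order Taylor expansion of $u$ around $x$ produces opposite linear terms $\pm\nabla u(x)\cdot z$ in the two arguments, and the oddness of $s\mapsto s^{p-1}$ makes the leading contributions cancel, leaving only a perturbation of order $|z|^{1+\gamma}$. The whole estimate then reduces to controlling the modulus of continuity of the scalar function $s\mapsto s^{p-1}$ under a perturbation of this size.

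Concretely, I would first fix a compact set $K'$ with $K\Subset K'\Subset\Omega$ and choose $R_K$ so small that $K+B_{R_K}\subset K'$, then set $M=\|u\|_{C^{1,\gamma}(K')}$. Writing $t=\nabla u(x)\cdot z$ and $u(x\pm z)=u(x)\pm t+R_\pm(x,z)$, with $|R_\pm|\leq M|z|^{1+\gamma}$ by Taylor's theorem, the oddness identity $(-s)^{p-1}=-s^{p-1}$ rewrites the left-hand side as $\bigl[(t-R_-)^{p-1}-t^{p-1}\bigr]-\bigl[(t+R_+)^{p-1}-t^{p-1}\bigr]$. All the arguments appearing lie in the interval $[-2M|z|,2M|z|]$, and it suffices to bound each bracket by the appropriate power of $|z|$.

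For $p\geq 2$ the function $s\mapsto s^{p-1}$ is $C^1$ on $\R$ with derivative $(p-1)|s|^{p-2}$, bounded by $C|z|^{p-2}$ on the relevant segment, so the mean value theorem immediately yields each bracket $\leq C_{K,u}|z|^{p-2}\cdot|z|^{1+\gamma}=C_{K,u}|z|^{p-1+\gamma}$. For $1<p<2$ the derivative blows up at the origin and this direct approach fails; the plan is instead to invoke the universal $(p-1)$-H\"older estimate $|s_1^{p-1}-s_2^{p-1}|\leq C|s_1-s_2|^{p-1}$ valid for all $s_1,s_2\in\R$, which applied to each bracket gives $C|R_\pm|^{p-1}\leq C_{K,u}|z|^{(1+\gamma)(p-1)}$, as required.

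The expected difficulty is precisely establishing this H\"older estimate in the singular case $p<2$. When $s_1 s_2\geq 0$ it is immediate from inequality~\eqref{in2} applied with exponent $q=p-1\in(0,1]$, but in the opposite-sign case \eqref{in2} gives an inequality in the \emph{wrong} direction, and a short separate verification is needed, for instance by analyzing the ratio $(|s_1|^{p-1}+|s_2|^{p-1})/(|s_1|+|s_2|)^{p-1}$ on its range to obtain a constant of the form $2^{2-p}$. Once this inequality is in place, both cases follow from the same cancellation structure.
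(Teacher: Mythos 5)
Your proof is correct, and the underlying idea coincides with the paper's: the oddness of $s\mapsto s^{p-1}$ makes the first-order contributions cancel, and the residual difference of $(p-1)$-powers is then estimated via the local derivative bound $(p-1)|s|^{p-2}\leq C|z|^{p-2}$ on the relevant segment when $p\geq 2$, and via the global $(p-1)$-H\"older continuity of $s\mapsto s^{p-1}$ when $p<2$. The only difference is in how the cancellation is organized. You Taylor-expand both $u(x\pm z)$ around the base point $x$, introduce the remainders $R_\pm$ with $|R_\pm|\leq M|z|^{1+\gamma}$, and compare each of $(t-R_-)^{p-1}$, $(t+R_+)^{p-1}$ to the common reference $t^{p-1}$. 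The paper instead applies the mean value theorem along each segment to write $u(x)-u(x\pm z)=\mp Du(x\pm\tau_\pm z)\cdot z$ exactly, and compares the two resulting $(p-1)$-powers to each other directly, using inequality~\eqref{in4} in the degenerate case. Both decompositions produce exactly the same exponents and constants up to universal factors, and both rely identically on the $C^{1,\gamma}$ control of $Du$ to extract the extra factor $|z|^{\gamma}$ (for $p\geq 2$) or $|z|^{\gamma(p-1)}$ (for $p<2$). Your observation that the singular-case H\"older inequality $|s_1^{p-1}-s_2^{p-1}|\leq 2^{2-p}|s_1-s_2|^{p-1}$ requires a separate check for $s_1s_2<0$ (via concavity of $t\mapsto t^{p-1}$) is also accurate; the paper invokes this fact without spelling it out.
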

\begin{proof}
Since $K$ is compact, we can find $R_K>0$ such that
\[\Omega_K:=\{x\in\R^N:\,{\rm dist}(x,K)\le R_K\}\subset\Omega.\]
Consider first the case $p\geq 2$. Since $u\in C^{1,\gamma}(\Omega_K)$, for all $x\in K$, $z\in B_{R_K}$ there exist $\tau_1,\tau_2\in[0,1]$ with
\begin{align*}
&\big|(u(x)-u(x+z))^{p-1}+(u(x)-u(x-z))^{p-1}\big| \\
&= \big|(Du(x+\tau_1 z)\cdot z)^{p-1}-(Du(x-\tau_2 z)\cdot z)^{p-1}\big| \\
&\le (p-1)\sup_{B_{R_K}(x)}|Du|^{p-2}|z|^{p-2}
\big|\big(Du(x+\tau_1 z)-Du(x-\tau_2 z)\big)\cdot z\big| \\
&\le C\|Du\|_{C^{0,\gamma}(\Omega_{K})}^{p-1}|z|^{\gamma+p-1}.
\end{align*}
If $1<p<2$ then $t\mapsto t^{p-1}$ is globally $(p-1)$-H\"older continuous and in this case we directly have, with the same notation as before,
\begin{align*}
&\big|(u(x)-u(x+z))^{p-1}+(u(x)-u(x-z))^{p-1}\big| \\
&\le C\big|Du(x+\tau_1 z)\cdot z-Du(x-\tau_2 z)\cdot z\big|^{p-1}\\
&\le C\|Du\|_{C^{0,\gamma}(\Omega_{K})}^{p-1}|z|^{\gamma(p-1)}|z|^{p-1},
\end{align*}
which concludes the proof.
\end{proof}

\noindent
The following result shows sufficient conditions to write $\psl u$ as a locally bounded function:

\begin{proposition}[$\psl$ on $C^{1,\gamma}$ functions]\label{weak-strong}
Suppose $\Omega$ is bounded, $u\in \widetilde{W}^{s,p}(\Omega)\cap C^{1,\gamma}_{\rm loc}(\Omega)$, with $\gamma\in [0,1]$ such that
\beq\label{assumptionsgamma}
\gamma>
\begin{cases}
1-p(1-s) & \text{if $p\ge 2$} \\
\displaystyle\frac{1-p(1-s)}{p-1} & \text{if $p<2$}.
\end{cases}
\eeq
Then $\psl u=f$ strongly in $\Omega$ for some $f\in L^\infty_{\rm loc}(\Omega)$
\end{proposition}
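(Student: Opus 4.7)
The plan is to pass to the principal value directly in the symmetric representation \eqref{psl-strong}, showing that the integrand is dominated by a function which is integrable in $z$ and uniformly bounded in $x$ on compact subsets of $\Omega$. Fix $K\Subset\Omega$ and let $R_K>0$ be the radius supplied by Lemma \ref{zero}. For $x\in K$ and $\eps\in(0,R_K)$ I split
\[
g_\eps(x):=2\int_{B_\eps^c(x)}\frac{(u(x)-u(y))^{p-1}}{|x-y|^{N+ps}}\,dy=I_\eps(x)+J(x),
\]
where, after the change of variable $y=x\pm z$, $I_\eps(x)$ collects the contribution of $\eps<|z|<R_K$ and $J(x)$ that of $|z|\ge R_K$ (the latter being independent of $\eps$).

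The tail term $J(x)$ is the easy one. Using \eqref{in3}--\eqref{in2} to bound $|u(x)-u(x\pm z)|^{p-1}$ by $C(|u(x)|^{p-1}+|u(x\pm z)|^{p-1})$, and invoking \eqref{lkj} with $A=K$ and $B=B_{R_K}$ to replace $|z|^{-N-ps}$ by $(1+|x+z|)^{-N-ps}$, I obtain, exactly as in \eqref{hhh}, a uniform estimate
\[
|J(x)|\le C_K\Big(\|u\|_{L^\infty(K)}^{p-1}+\int_{\R^N}\frac{|u(y)|^{p-1}}{(1+|y|)^{N+ps}}\,dy\Big),
\]
which is finite thanks to $u\in C^{1,\gamma}_{\rm loc}(\Omega)\cap\widetilde W^{s,p}(\Omega)$ and independent of $x\in K$.

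For the singular piece $I_\eps(x)$, Lemma \ref{zero} bounds the integrand by $C_{K,u}|z|^{\alpha-N-ps}$ with $\alpha=\gamma+p-1$ when $p\ge 2$ and $\alpha=(p-1)(\gamma+1)$ when $p<2$. The key observation, which is the one nontrivial arithmetic step of the argument and the only place where the hypothesis on $\gamma$ enters, is that \eqref{assumptionsgamma} is precisely equivalent to $\alpha>ps$; this makes the singularity $|z|^{\alpha-N-ps}$ integrable at the origin. The $\eps$- and $x$-independent dominator $C_{K,u}|z|^{\alpha-N-ps}\chi_{B_{R_K}}(z)$ then permits dominated convergence, producing a pointwise limit $I_0(x)$ with $|I_0(x)|\le C_{K,u}$ on $K$.

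Setting $f:=I_0+J$, the two bounds combine into a constant $C_K$, independent of $\eps$, with $|g_\eps(x)|,|f(x)|\le C_K$ for every $x\in K$; in particular $f\in L^\infty_{\rm loc}(\Omega)$ and $g_\eps\to f$ pointwise on $\Omega$. Since $|g_\eps-f|\le 2C_K\chi_K$ on $K$, a further application of dominated convergence upgrades pointwise to $L^1(K)$-convergence, which shows that $u$ solves $\psl u=f$ strongly in $\Omega$; the weak formulation then follows at once from Corollary \ref{simplw}. No genuine obstacle is expected beyond the exponent count $\alpha>ps$; the rest is a routine near/far splitting of the principal value.
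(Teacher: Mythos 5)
Your argument is correct and takes essentially the same route as the paper: the same near/far splitting of the principal value at radius $R_K$, the same application of Lemma \ref{zero} (your $\alpha$ is the paper's $\sigma$) together with the key equivalence of \eqref{assumptionsgamma} with $\sigma>ps$ that renders the origin singularity integrable, the same use of the $\widetilde W^{s,p}$ tail condition for the far part, and dominated convergence to upgrade to $L^1_{\rm loc}$ convergence.
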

\begin{proof}
Let $U$ be as in Definition \ref{defwtilde} for $u$, fix a compact set $K\subset\Omega$ and let $R_K,C_K>0$ be as in Lemma \ref{zero}. Define, for $x\in K$, $\eps>0$, 
\begin{align*}
g_\eps(x)&:=\int_{B_\eps^c}\frac{(u(x)-u(x+z))^{p-1}+(u(x)-u(x-z))^{p-1}}{|z|^{N+ps}}\, dz\\
&=2\int_{B_\eps^c}\frac{(u(x)-u(x-z))^{p-1}}{|z|^{N+ps}}\, dz.
\end{align*}
We claim that $g_\eps$ converges as $\eps\to 0^+$ in a dominated way to some $f\in L^\infty(K)$. We split the integral in one for $z\in B_{R_K}$ and one over $B_{R_K}^c$. For the first one, the previous lemma gives
\[\Big|\frac{(u(x)-u(x+z))^{p-1}+(u(x)-u(x-z))^{p-1}}{|z|^{N+ps}}\Big| \le \frac{C_{K, u}}{|z|^{N+ps-\sigma}},\]
where $\sigma=\gamma+p-1$ if $p\geq 2$ and $\sigma=(\gamma+1)(p-1)$ if $1<p<2$. Notice that, in both cases, we have $ps-\sigma<0$. Due to assumptions \eqref{assumptionsgamma}, the integral is thus non-singular, and it holds
\[\lim_{\eps\to 0}\int_{B_{R_K}\setminus B_\eps}\frac{(u(x)-u(x+z))^{p-1}+(u(x)-u(x-z))^{p-1}}{|z|^{N+ps}}\, dz=:f_1(x),\]
\[\Big|\int_{B_{R_K}\setminus B_\eps}\frac{(u(x)-u(x+z))^{p-1}+(u(x)-u(x-z))^{p-1}}{|z|^{N+ps}}\, dz\Big|\leq \int_{B_{R_K}}\frac{C_{K, u}}{|z|^{N+ps-\sigma}}\, dz,\]
which is a bound independent of $x\in K$ and $\eps>0$. For the integral over $z\in B^c_{R_K}$ we have, as in \eqref{hhh},
\begin{align*}
|f_2(x)|&:=\Big|2\int_{B^c_{R_K}}\frac{(u(x)-u(x+z))^{p-1}}{|z|^{N+ps}}\, dz\Big|\\
&\leq  C_{K, U}\Big(\|u\|_{L^\infty(K)}^{p-1}+\int_{\R^N}\frac{|u(y)|^{p-1}}{(1+|y|)^{N+ps}}\, dy\Big).
\end{align*}
Gathering togheter the two estimates, we get
\[|g_\eps(x)|\leq C_{K, u, U}\quad \text{$\forall x\in K$, $\eps>0$},\]
\[\lim_{\eps\to 0^+}g_\eps(x)= f_1(x)+f_2(x)\quad \forall x\in K,\]
and thus by the dominated convergence theorem $g_\eps\to f_1+f_2$ in $L^1(K)$. 
\end{proof}

\begin{remark}
It is useful to outline the dependence of $\|\psl u\|_\infty$ on $s$ in the previous proposition. Suppose, to fix ideas, that $p\geq 2$ and $u\in C^\infty_c(\R^N)$, so that the domain $\Omega$ has no role. Then, following the proof, we can find a constant $c_N$ depending only on $N$ such that
\[\|\psl u\|_{\infty}\leq c_N\frac{\|u\|_{C^2(\R^N)}^{p-1}}{1-s}.\]
This is in accordance with the well known fact that $(1-s)\psl \to -\Delta_p$ as $s\to 1^-$ (see e.g.\ \cite{ponce}).
\end{remark}

\begin{remark}
\label{remarkL}
Consider the class of functions
\[{\mathcal L}(\Omega)=\{u\in \widetilde{W}^{s,p}(\Omega):\psl u=f\,\,\text{strongly for some $f\in L^\infty_{\rm loc}(\Omega)$}\}.\]
The previous theorem asserts that if $p\geq 2$, then $C^2(\Omega)\subseteq {\mathcal L}(\Omega)$. However, if $1<p<2$, it may be difficult  to find smooth functions (e.g., smooth cut-offs) belonging to ${\mathcal L}(\Omega)$, since the second condition in \eqref{assumptionsgamma} coupled with $\gamma\leq 1$ forces $s<2(p-1)/p$. One may think that this is just a technical limit of the proof, or that requiring higher regularity than $C^2$ could solve the issue. Unfortunately, due to the singular nature of the operator for $1<p<2$, this is not the case: there are smooth functions $u$ such that $\psl u$ (in the strong sense) cannot be pointwise bounded. Consider for example $u(x)=x^2\eta(x)$, where $\eta\in C^\infty_c(\R)$ and $\eta=1$ on $[-1,1]$. Calculating $\psl u(0)$ as a principal value  gives
\[|\psl u(0)|<\infty\quad \Leftrightarrow\quad s<2\frac{p-1}{p}.\]
\end{remark}

\section{Distance functions}\label{sec3}

\noindent
In this section we produce some explicit solutions for $\psl$ in special domains. Then we prove that $\psl\delta^s$ is bounded in a neighborhood of $\partial\Omega$ (here we define $\delta=\delta_\Omega$ as in Section \ref{sec2}). We begin by getting a solution of $\psl u=0$ on the half-line $\R_+$.

\begin{lemma}\label{sol1d}
Set $u_1(x)=x_+^s$ for all $x\in\R$. Then $u_1\in \widetilde{W}^{s,p}_{\rm loc}(\R)$ and it solves $\psl u_1=0$ strongly and weakly in $\R_+$.
\end{lemma}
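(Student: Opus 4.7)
The plan is to compute $\psl u_1(x)$ pointwise for every $x>0$, reducing by homogeneity to the single point $x=1$, and then to promote the pointwise identity to the strong and weak formulations (the latter via Corollary \ref{simplw}). Membership in $\widetilde{W}^{s,p}_{\rm loc}(\R)$ is a preliminary check: $u_1$ is locally $s$-H\"older continuous, hence in $W^{s,p}(U)$ for every bounded $U\Subset\R$, and the tail integral reduces to $\int_0^\infty y^{s(p-1)}(1+y)^{-1-ps}\,dy$, convergent since $s(p-1)-1-ps=-(1+s)<0$.

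Since $u_1(\rho x)=\rho^s u_1(x)$ for all $\rho>0$, combining the scaling and homogeneity parts of Lemma \ref{hs} forces $\psl u_1(\rho x)=\rho^{-s}\psl u_1(x)$ on $\R_+$, so it is enough to prove $\psl u_1(1)=0$, i.e.
\[I:={\rm PV}\int_\R \frac{(1-u_1(y))^{p-1}}{|1-y|^{1+ps}}\,dy=0.\]
The piece over $(-\infty,0)$ evaluates exactly to $1/(ps)$. For the piece over $(1,\infty)$ I would apply the inversion $y=1/z$ with $z\in(0,1)$: using $|1-z^{-s}|=z^{-s}(1-z^s)$ and $|1-z^{-1}|=(1-z)/z$ and simplifying the powers of $z$, this piece becomes $-\int_0^1 (1-z^s)^{p-1}z^{s-1}(1-z)^{-1-ps}\,dz$. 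Adding this to the contribution of $(0,1)$ yields, in the PV sense,
\[J:=\int_0^1\frac{(1-y^s)^{p-1}(1-y^{s-1})}{(1-y)^{1+ps}}\,dy,\]
whose integrand is actually absolutely integrable near $y=1$ (it is of order $(1-y)^{p-1-ps}$, with $p(1-s)>0$).

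The crux is the antiderivative identity
\[\frac{d}{dy}\left[\frac{(1-y^s)^p}{(1-y)^{ps}}\right]=ps\cdot\frac{(1-y^s)^{p-1}(1-y^{s-1})}{(1-y)^{1+ps}},\]
checked by elementary differentiation. Setting $G(y)=(1-y^s)^p/(1-y)^{ps}$, one has $G(0)=1$ and $G(1^-)=0$ (the numerator vanishes of order $p$, the denominator of order $ps$, and $p>ps$ since $s<1$), whence the fundamental theorem of calculus gives $J=-1/(ps)$ and consequently $I=0$.

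Finally, scaling yields $\psl u_1(x)=0$ pointwise for every $x>0$, and the same scaling applied to the truncated integrals (which satisfy $g_\eps(x)=x^{-s}g_{\eps/x}(1)$ by a change of variable $y=xz$) gives their uniform convergence on compact subsets of $\R_+$, hence strong convergence in $L^1_{\rm loc}(\R_+)$. Corollary \ref{simplw} then yields the weak statement. The main obstacle is spotting the antiderivative $G$: without it $J$ looks like an intractable beta-type integral, whereas with $G$ at hand the fundamental theorem collapses the problem to the exact cancellation with the $1/(ps)$ coming from the negative half-line.
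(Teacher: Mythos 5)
Your core computation is essentially the paper's, repackaged. After the pointwise scaling identity $g_\eps(x)=x^{-s}g_{\eps/x}(1)$, you fold the half-line $\{y>1\}$ onto $(0,1)$ by $y\mapsto 1/y$ and use the exact antiderivative
\[\frac{d}{dy}\,\frac{(1-y^s)^p}{(1-y)^{ps}}=ps\,\frac{(1-y^s)^{p-1}\big(1-y^{s-1}\big)}{(1-y)^{1+ps}},\]
which is precisely the antiderivative the paper finds inside its computation of $I_3$ via the substitutions $t=y/x$ and $\xi=t^{-1}$. The difference is organisational: the paper keeps $x$ general and obtains the quantitative bound $|g^{(1)}_\eps(x)|\leq Cx^{-s}\psi(x,\eps)$ (see \eqref{estg}), which it then reuses verbatim in Lemma \ref{solnd}; you reduce to the single point $x=1$ and transfer the convergence to compacts by the scaling relation, which is lighter and is enough for the present lemma (though it would leave you without \eqref{estg} for the $N$-dimensional step).

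There is, however, a genuine error in the preliminary check that $u_1\in\widetilde{W}^{s,p}_{\rm loc}(\R)$. You assert that local $s$-H\"older continuity of $u_1$ implies $u_1\in W^{s,p}(U)$ for bounded $U$, but the endpoint embedding $C^{0,s}(\overline U)\hookrightarrow W^{s,p}(U)$ is \emph{false}: using only $|u(x)-u(y)|\le C|x-y|^s$, the integrand of the Gagliardo seminorm is bounded by $C|x-y|^{-N}$, which is not integrable on $U\times U$ (the inclusion $C^{0,\alpha}\subset W^{\beta,p}$ on bounded domains holds only for $\beta<\alpha$). The claim $u_1\in W^{s,p}(U)$ is of course true for this particular $u_1$, but it must be verified by a computation that exploits the structure of $x_+^s$; the paper does so by the change of variables $t=y/x$, which factors the double integral over $(0,b)^2$ into $\big(\int_0^b dx\big)\cdot\int_0^1(1-t^s)^p(1-t)^{-1-ps}\,dt$, the latter being finite because the integrand behaves like $(1-t)^{p(1-s)-1}$ near $t=1$ and $p(1-s)>0$. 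You should replace the one-line appeal to H\"older continuity by a direct estimate of this kind.
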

\begin{proof}
Let $K\subseteq (\rho, \rho^{-1})$ for some $\rho\in(0,1)$. For $x\in K$, $\eps>0$ consider the function
\beq
\label{defg}
g^{(1)}_\eps(x)=\int_{B_\eps^c(x)}\frac{(x^s-y_+^s)^{p-1}}{|x-y|^{1+ps}}\, dy.
\eeq
We claim that $g^{(1)}_\eps\to 0$ uniformly on $K$, as $\eps\to 0^+$. Note that for any $\eps<x$ it holds
\[0<x-\eps<x+\eps<\frac{x^2}{x-\eps}.\]
We split the integral accordingly, namely
\begin{align*}
g^{(1)}_\eps(x)&=\int_{-\infty}^0\frac{(x^s-y_+^s)^{p-1}}{|x-y|^{1+ps}}\, dy
+\int_{x+\eps}^{\frac{x^2}{x-\eps}}\frac{(x^s-y_+^s)^{p-1}}{|x-y|^{1+ps}}\, dy \\
&\quad +\Big(\int_0^{x-\eps}\frac{(x^s-y_+^s)^{p-1}}{|x-y|^{1+ps}}\, dy+\int_{\frac{x^2}{x-\eps}}^{\infty}\frac{(x^s-
y_+^s)^{p-1}}{|x-y|^{1+ps}}\, dy\Big) \\
&= I_1(x)+I_2(x, \eps)+I_3(x, \eps).
\end{align*}
Let us estimate the three terms separately. It holds
\[I_1(x)=\int_{-\infty}^0\frac{(x^s-y_+^s)^{p-1}}{|x-y|^{1+ps}}\, dy=\frac{x^{-s}}{ps}.\]
Regarding the integral between $x+\eps$ and $\frac{x^2}{x-\eps}$, since $\xi\mapsto \xi^s$ is globally $s$-H\"older, we have
\[|I_2(x, \eps)|\leq C\int_{x+\eps}^{\frac{x^2}{x-\eps}}\frac{|x-y|^{s(p-1)}}{|x-y|^{1+ps}}\, dy=\frac{Cx^{-s}}{s}\frac{x^s-(x-\eps)^s}{\eps^s}.\]
Finally,
\begin{align*}
I_3(x, \eps)&=\frac{x^{s(p-1)}}{x^{1+ps}}\Big(\int_0^{x-\eps}\frac{(1-(y/x)^s)^{p-1}}{(1-y/x)^{1+ps}}\, dy-\int_{x^2/(x-\eps)}^{\infty}\frac{((y/x)^s-1)^{p-1}}{(y/x-1)^{1+ps}}\, dy\Big)\\
&\!\!\!\!\!\underset{t=\frac{y}{x}=\xi}{=} x^{-s}\Big(\int_0^{1-\eps/x}\frac{(1-t^s)^{p-1}}{(1-t)^{1+ps}}\, dt-\int_{(1-\eps/x)^{-1}}^{\infty}\frac{(\xi^s-1)^{p-1}}{(\xi-1)^{1+ps}}\, d\xi\Big)\\
&\!\!\!\!\underset{\xi=t^{-1}}{=}x^{-s}\Big(\int_0^{1-\eps/x}\frac{(1-t^s)^{p-1}}{(1-t)^{1+ps}}\, dt-\int_0^{1-\eps/x}\frac{(t^{-s}-1)^{p-1}}{(t^{-1}-1)^{1+ps}}\, \frac{dt}{t^2}\Big)\\
&=x^{-s}\int_0^{1-\eps/x}\frac{(1-t^s)^{p-1}}{(1-t)^{1+ps}}(1-t^{s-1})\, dt\\
&=x^{-s}\Big[\frac{1}{ps}\frac{(1-t^s)^p}{(1-t)^{ps}}\Big]_0^{1-\eps/x}=\frac{x^{-s}}{ps}\Big(\Big(\frac{x^s-(x-\eps)^s}{\eps^s}\Big)^p-1\Big).
\end{align*}
Let for $\eps<x$
\beq
\label{defpsi}
\psi(x, \eps)=\frac{x^s-(x-\eps)^s}{\eps^s},
\eeq
and notice that from the subadditivity of $x\mapsto x^s$ we get $\psi(x, \eps)\leq 1$. Gathering together the three previous estimates we get
\beq
\label{estg}
|g^{(1)}_\eps(x)|\leq Cx^{-s}(\psi(x, \eps)+\psi^p(x, \eps))\leq  Cx^{-s}\psi(x, \eps) \quad \forall x>\eps>0,
\eeq
where $C$ is a universal constant.
Since  $\psi(x, \eps)\to 0$ uniformly on $[\rho, \rho^{-1}]\supseteq K$, as $\eps\to 0^+$, the claim follows. Finally we prove that $u_1\in \widetilde{W}^{s,p}(a,b)$ for any $a<0<b$. We have
\begin{align*}
&\int_{[a,b]\times[a,b]}\frac{|u_1(x)-u_1(y)|^p}{|x-y|^{1+ps}}\, dx\, dy\\
&\,\,\,\,=2\int_0^b\int_0^x\frac{|x^s-y^s|^p}{|x-y|^{1+ps}}\, dy\, dx+2\int_0^bx^{sp}\int_a^0\frac{1}{|x-y|^{1+ps}}\, dy\, dx\\
&\underset{t=y/x}{=}2\int_0^b\int_0^1\frac{|1-t^s|^p}{|1-t|^{1+ps}}\, dt\, dx+\frac{2}{ps}\int_0^bx^{sp}\Big(\frac{1}{x^{ps}}-\frac{1}{|x-a|^{ps}}\Big)\, dx,
\end{align*}
which is readily checked to be finite. The assertion follows through Lemma \ref{remws} and Corollary \ref{simplw}.
\end{proof}

\noindent
Now we study the solution $u(x)=u_1(x_N)$ in the half-space
\[\R^N_+=\{x\in\R^N:\,x_N>0\}.\]

\begin{lemma}\label{solnd}
Set for any $A\in GL_N$ and $x\in \R^N_+$,
\[g_{\eps}(x, A)=\int_{B_\eps^c}\frac{(u_1(x_N)-u_1(x_N+z_N))^{p-1}}{|Az|^{N+ps}}\, dz\]
and $u(x)=u_1(x_N)$.
Then $g_\eps\to 0$ uniformly in any compact $K\subseteq \R^N_+\times GL_N$ and $u\in \widetilde{W}^{s,p}_{\rm loc}(\R^N)$ solves $\psl u=0$ strongly and weakly in $\R^N_+$.
\end{lemma}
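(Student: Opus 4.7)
The plan is to reduce $g_\eps(x,A)$ to the one-dimensional quantity $g^{(1)}_\eta$ of Lemma~\ref{sol1d} via spherical coordinates. Writing $z=r\omega$ with $r>0$, $\omega\in S^{N-1}$, so that $|Az|=r|A\omega|$ and $z_N=r\omega_N$; for $\omega_N>0$ the change of variable $t=r\omega_N$ gives
\[
\int_\eps^\infty \frac{(u_1(x_N)-u_1(x_N+r\omega_N))^{p-1}}{r^{1+ps}}\,dr
= \omega_N^{ps}\int_{\eps\omega_N}^\infty \frac{(u_1(x_N)-u_1(x_N+t))^{p-1}}{|t|^{1+ps}}\,dt,
\]
and an analogous identity for $\omega_N<0$ produces the integral over $(-\infty,-\eps|\omega_N|)$; the set $\{\omega_N=0\}$ contributes nothing since the numerator vanishes there. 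I would then fold the $\omega_N<0$ hemisphere onto $S^{N-1}_+$ via $\omega\mapsto-\omega$, which preserves both the surface measure and $|A\omega|=|A(-\omega)|$, merging the two halves into a single integral over $\{|t|>\eps\omega_N\}$ and obtaining
\[
g_\eps(x,A) \;=\; \int_{S^{N-1}_+}\frac{\omega_N^{ps}}{|A\omega|^{N+ps}}\, g^{(1)}_{\eps\omega_N}(x_N)\,d\omega.
\]

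Next, given a compact $K\subset \R^N_+\times GL_N$, compactness supplies $\rho>0$ with $x_N\in[\rho,\rho^{-1}]$ for $(x,A)\in K$ and $c>0$ with $|A\omega|\ge c$ uniformly in such $A$ and $\omega\in S^{N-1}$. For $\mu>0$, the uniform-on-compacts convergence established in the proof of Lemma~\ref{sol1d} yields $\eta_0>0$ with $|g^{(1)}_\eta(x_N)|\le \mu$ whenever $\eta\in(0,\eta_0]$ and $x_N\in[\rho,\rho^{-1}]$. Since $\eps\omega_N\le\eps\le\eta_0$ for $\eps\le\eta_0$ and $\omega_N\in(0,1]$, the formula above gives
\[
|g_\eps(x,A)| \;\le\; \mu\, c^{-(N+ps)}\int_{S^{N-1}_+}\omega_N^{ps}\,d\omega,
\]
and the last integral is finite since $ps>0$. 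This proves $g_\eps\to 0$ uniformly on $K$; specialising to $A=I$ gives exactly the strong convergence condition \eqref{psl-strongg} with $f\equiv 0$, so $u$ solves $\psl u=0$ strongly in $\R^N_+$.

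To upgrade to a weak solution via Corollary~\ref{simplw}, I still need to check $u\in\widetilde W^{s,p}_{\rm loc}(\R^N)$. For bounded $\Omega'$ and any ball $U=B_R\Supset\Omega'$, $u\in L^p(U)$ trivially; Fubini together with the scaling identity $\int_{\R^{N-1}}(|z'|^2+t^2)^{-(N+ps)/2}\,dz'=C_N|t|^{-(1+ps)}$ reduces the Gagliardo seminorm to the 1D one,
\[
[u]_{s,p,U}^p \;\le\; C(R,N)\int_{-R}^{R}\int_{-R}^R \frac{|(x_N)_+^s-(y_N)_+^s|^p}{|x_N-y_N|^{1+ps}}\,dx_N\,dy_N,
\]
which is finite by Lemma~\ref{sol1d}; the tail $\int_{\R^N}|u(x)|^{p-1}(1+|x|)^{-(N+ps)}\,dx$ is bounded using $(x_N)_+\le|x|$ by $\int_{\R^N}|x|^{s(p-1)}(1+|x|)^{-(N+ps)}\,dx$, whose integrand decays like $|x|^{-N-s}$ at infinity and is thus integrable.

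The most delicate step will be the polar-coordinate reduction itself, in particular folding the two hemispheres using $|A\omega|=|A(-\omega)|$; once it is in place, uniformity on compact subsets of $\R^N_+\times GL_N$ is automatic because $\omega_N\le 1$ pins $\eps\omega_N\le\eps$, so Lemma~\ref{sol1d}'s uniform control transfers directly without any further splitting of the $\omega$-integral.
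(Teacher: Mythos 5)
Your proof is correct and takes a genuinely simpler route than the paper's. The paper first substitutes $y=Az$ to turn $|Az|$ into $|y|$, which deforms the domain and forces the use of elliptic coordinates adapted to the ellipsoid $AS^{N-1}$; in the resulting reduction the one-dimensional quantity $g^{(1)}$ appears at scale $(\omega\cdot e_A)\eps$, where $\omega\cdot e_A$ can exceed $1$ (it is bounded only by $\|A\|_2\|A^{-1}\|_2$), so the paper must additionally prove that $\psi(x_N,t)$ from \eqref{defpsi} is non-decreasing in $t$ to push the bound to the worst case. You instead keep ordinary spherical coordinates in $z$, exploiting $|Az|=r|A\omega|$; this costs only the angular factor $|A\omega|^{-(N+ps)}$, which is uniformly controlled on compacts of $GL_N$ by the least singular value, and buys the decisive advantage that $\eps\omega_N\le\eps$ always holds. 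Consequently the effective radius in $g^{(1)}_{\eps\omega_N}$ never exceeds $\eps$, and the uniform convergence $g^{(1)}_\eta\to 0$ on $[\rho,\rho^{-1}]$, already established in Lemma~\ref{sol1d}, suffices directly, with no monotonicity lemma needed. The folding of the two hemispheres via $\omega\mapsto-\omega$ (which uses $|A(-\omega)|=|A\omega|$ and the symmetry of the surface measure) correctly reconstitutes $g^{(1)}_{\eps\omega_N}$ as a full two-sided truncation, and the finiteness of $\int_{S^{N-1}_+}\omega_N^{ps}\,d\omega$ is immediate since $\omega_N^{ps}\le 1$. Your verification of $u\in\widetilde{W}^{s,p}_{\rm loc}(\R^N)$ via the dimensional reduction $\int_{\R^{N-1}}(|z'|^2+t^2)^{-(N+ps)/2}\,dz'=C_N|t|^{-(1+ps)}$ and the decay estimate on the tail is likewise sound, and makes explicit what the paper declares "readily checked."
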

\begin{proof}
It suffices to prove the statement for $K=H\times H'$, where $H\subseteq \R^N_+$ and $H'\subseteq GL_N$ are compact (recall that $GL_N$ is open in $\R^{N^2}$). 
To estimate $g_{\eps}$ we use elliptic coordinates. A consequence of the singular value decomposition is that $A S^{N-1}$ is an ellipsoid whose semiaxes are the singular values of $A$, and thus its diameter is $2\|A\|_2$, where the latter is the spectral norm of $A$.
The corresponding elliptic coordinates are uniquely defined by
\[y=\rho\omega, \quad \omega\in AS^{N-1},\quad \rho>0,\]
for $y\in \R^N\setminus\{0\}$. It holds $dy=\rho^{N-1}d\omega\, d\rho$ where $d\omega$ is the surface element of $A S^{N-1}$. Setting 
\[e_A:=A^{-1}e_N,\quad E_A:=\{x\in \R^N:\,x\cdot e_A\geq 0\},\]
we compute, through the change of variable $z=A^{-1}y$,
\begin{align*}
&g_{\eps}(x, A)\\
&=\int_{B^c_\eps}\frac{(u(x)-u(x+z))^{p-1}}{|Az|^{N+ps}}\, dz=|{\rm det} A|^{-1}\int_{AB^c_\eps}\frac{(u(x)-u(x+A^{-1}y))^{p-1}}{|y|^{N+ps}}\, dy\\
&=\int_{AS^{N-1}}\frac{1}{|{\rm det} A||\omega|^{N+ps}}\int_\eps^{\infty}\frac{(u_1(x_N) - u_1(x_N+\omega\cdot e_A\rho))^{p-1}}{\rho^{1+ps}}\, d\rho\, d\omega\\
&=\int_{AS^{N-1}\cap E_A}\!\!\frac{|\omega\cdot e_A|^{1+ps}}{|{\rm det} A||\omega|^{N+ps}}\int_{(-\eps, \eps)^c}\frac{(u_1(x_N)-u_1(x_N+\omega\cdot e_A\rho))^{p-1}}{|\omega\cdot e_A\rho|^{1+ps}}\, d (\omega\cdot e_A\rho)\, d\omega\\
&=\int_{AS^{N-1}\cap E_A}\frac{|\omega\cdot e_A|^{1+ps}}{|{\rm det} A||\omega|^{N+ps}}\,g^{(1)}_{\omega\cdot e_A\eps}(x_N) \, d\omega,
\end{align*}
where $g^{(1)}_{\omega\cdot e_A\eps}$ is defined as in \eqref{defg}. Since $|\omega\cdot e_A|\leq \|A\|_2\|A^{-1}\|_2$, the condition
\[\omega\cdot e_A\eps<x_N\]
holds for $\eps\leq \bar \eps$ where $\bar\eps$ depends only on $H$ and $H'$. For any such $\eps$ we can apply \eqref{estg} to obtain 
\[|g_{\eps}(x, A)|\leq Cx_N^{-s}\int_{AS^{N-1}\cap E_A}\frac{|\omega\cdot e_A|^{1+ps}}{|{\rm det} A||\omega|^{N+ps}}\psi(x_N, \omega\cdot e_A\eps)\, d\omega,\]
where $\psi$ is defined in \eqref{defpsi}. Since $\xi\mapsto \xi^s$ is concave for $0<s<1$, we have
\[s(x_N-t)^{s-1}t\ge x_N^s-(x_N-t)^s,\]
and being $1>s>0$ it follows
\[\frac{\partial \psi(x_N, t)}{\partial t}=s\frac{(x_N-t)^{s-1}t-x_N^s+(x_N-t)^s}{t^{1+s}}\geq 0, \quad \text{for $0< t\le x_N$}.\]
Therefore $\psi(x_N, t)$ is non-decreasing in $t$, thus we get
\begin{align*}
|g_{\eps}(x, A)|&\leq Cx_N^{-s}\psi(x_N, \|A\|_2\|A^{-1}\|_2\eps)\int_{AS^{N-1}}\frac{|\omega\cdot e_A|^{1+ps}}{|{\rm det} A||\omega|^{N+ps}}\, d\omega\\
&\leq Cx_N^{-s}\psi(x_N, \|A\|_2\|A^{-1}\|_2\eps)\int_{S^{N-1}}\frac{|\omega\cdot e_N|^{1+ps}}{|A\omega|^{N+ps}}\, d\omega\\
&\leq Cx_N^{-s}\psi(x_N, \|A\|_2\|A^{-1}\|_2\eps)\|A^{-1}\|_2^{N+ps}.
\end{align*}
Now $ \|A\|_2$ and $\|A^{-1}\|_2$ are bounded on $H'$ from below and above, as well as $x_N$ on $H$, and the uniform convergence follows. As in the previous proof, it is readily checked that $u\in \widetilde{W}^{s,p}(V)$ for any bounded $V$, and the second statement follows as before. 
\end{proof}

\begin{remark}\label{rotate}
Due to rotational invariance, Lemma \ref{solnd} easily extends to any half-space
\[H_e=\{x\in\R^N:\,x\cdot e\ge 0\} \quad (e\in S^{N-1}),\]
simply considering the solution $u(x)=(x\cdot e)_+^s$.
\end{remark}

\noindent
The following lemma gives a control on the behaviour of $\psl (x_N)^s_+$ under a smooth change of variables.
\begin{lemma}[Change of variables]
\label{lemmadiffeo}
Let $\Phi$ be a $C^{1,1}$ diffeomorphism of $\R^N$ such that $\Phi=I$ in $B_r^c$, $r>0$. Then the function $v(x)=(\Phi^{-1}(x)\cdot e_N)_+^s$ belongs to $\widetilde{W}^{s,p}_{\rm loc}(\R^N)$ and is a weak solution of $\psl v=f$ in $\Phi(\R^N_+)$, with
\beq
\label{stimaf}
\|f\|_\infty\leq C(\|D\Phi\|_\infty, \|D\Phi^{-1}\|_\infty, r)\|D^2\Phi\|_\infty.
\eeq
\end{lemma}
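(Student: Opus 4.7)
My plan is to compute $\psl v$ pointwise at each $x = \Phi(\xi)$ with $\xi \in \R^N_+$, extract a uniform $L^\infty$ bound for the resulting function $f$, and then appeal to Corollary \ref{simplw} to pass to the weak formulation. As a preliminary, I would verify $v \in \widetilde{W}^{s,p}_{\rm loc}(\R^N)$: writing $u(y) := (y\cdot e_N)_+^s$, one has $v = u\circ\Phi^{-1}$ globally, and since $\Phi^{-1}$ is Lipschitz and equal to the identity outside $B_r$, $v$ is globally $s$-H\"older and coincides with $u$ in $B_r^c$. Then $v - u$ is a compactly supported $s$-H\"older function; together with $u \in \widetilde{W}^{s,p}_{\rm loc}(\R^N)$ from Lemma \ref{solnd} (and rotational invariance, see Remark \ref{rotate}), this yields the local Gagliardo integrability and tail condition of Definition \ref{defwtilde} for $v$.

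For the pointwise computation, fix $\xi \in \R^N_+$, let $A := D\Phi(\xi) \in GL_N$ and $J := |\det D\Phi|$, and substitute $y = \Phi(\eta)$ in the regularized integral at $x = \Phi(\xi)$:
\[g^{(v)}_\eps(x) = \int_{E_\eps(\xi)} \frac{(u(\xi) - u(\eta))^{p-1}}{|\Phi(\xi) - \Phi(\eta)|^{N+ps}}\, J(\eta)\, d\eta,\quad E_\eps(\xi) := \{\eta : |\Phi(\xi) - \Phi(\eta)| > \eps\}.\]
I would then decompose $g^{(v)}_\eps(x) = J(\xi)\,\tilde g_\eps(\xi,A) + R_\eps(\xi)$, where $\tilde g_\eps(\xi,A)$ uses the frozen kernel $|A(\xi-\eta)|^{-(N+ps)}$ on the same set $E_\eps(\xi)$, and $R_\eps(\xi)$ collects the remainder. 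The vanishing $\tilde g_\eps(\xi,A) \to 0$ as $\eps \to 0^+$ should follow from the observation that $E_\eps(\xi)$ is bi-Lipschitz-sandwiched between balls centered at $\xi$ and is symmetric under $\xi \leftrightarrow \eta$: the proof of Lemma \ref{solnd} reduces, via elliptic coordinates, to a one-dimensional principal value that vanishes under any symmetric shrinking exclusion, and this argument carries over to $E_\eps(\xi)$.

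For the remainder, the $C^{1,1}$ Taylor expansion at $\xi$ yields $|\Phi(\xi) - \Phi(\eta)| = |A(\xi-\eta)|(1 + O(\|D^2\Phi\|_\infty |\xi - \eta|))$ and $|J(\eta) - J(\xi)| \leq C\|D^2\Phi\|_\infty |\xi - \eta|$, so
\[\Bigl|\tfrac{J(\eta)}{|\Phi(\xi) - \Phi(\eta)|^{N+ps}} - \tfrac{J(\xi)}{|A(\xi-\eta)|^{N+ps}}\Bigr| \leq C(\|D\Phi\|_\infty, \|D\Phi^{-1}\|_\infty)\, \|D^2\Phi\|_\infty\, |\xi-\eta|^{-(N+ps-1)}\]
near $\eta = \xi$; the $s$-H\"older bound $|u(\xi) - u(\eta)|^{p-1} \leq C|\xi-\eta|^{s(p-1)}$ then converts this into an integrable singularity of order $N + s - 1 < N$. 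For $|\eta|$ large, $\Phi(\eta) = \eta$ and $J(\eta) = 1$, while $\|D\Phi - I\|_\infty \leq r\|D^2\Phi\|_\infty$ and $|\Phi(\xi) - \xi| \leq C r^2 \|D^2\Phi\|_\infty$ (both inherited from $\Phi = I$ on $\partial B_r$) allow me to expand the kernel difference in powers of $1/|\eta|$; against the growth $|u(\xi) - u(\eta)|^{p-1} \leq C(1 + |\eta|)^{s(p-1)}$ this gives a convergent tail, still proportional to $\|D^2\Phi\|_\infty$. Combining, $|R_\eps(\xi)| \leq C(\|D\Phi\|_\infty, \|D\Phi^{-1}\|_\infty, r)\|D^2\Phi\|_\infty$ uniformly in $\eps$, with $R_\eps(\xi) \to R(\xi)$ pointwise by dominated convergence. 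The hard step, I expect, is the vanishing of $\tilde g_\eps(\xi,A)$: the integrand there is not absolutely integrable near the diagonal, so a crude size estimate on the symmetric difference $E_\eps(\xi) \triangle B_{c\eps}(\xi)$ cannot suffice and one must carefully preserve the odd-type cancellation underlying Lemma \ref{solnd}. Once the pointwise identity $g^{(v)}_\eps(x) \to f(x)$ is secured with $\|f\|_\infty$ bounded as in \eqref{stimaf}, the uniform domination upgrades it to $L^1_{\rm loc}$ convergence, and Corollary \ref{simplw} yields the weak statement in $\Phi(\R^N_+)$.
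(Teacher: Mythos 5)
Your outline correctly identifies the decomposition into a frozen-kernel main term and a remainder, and your Taylor bound on the remainder near the diagonal matches the paper's estimate \eqref{diffeo2}. But the critical step you flag as ``hard'' is genuinely a gap, and your proposed resolution does not work. You excise balls $\{|x-y|<\eps\}$ in the $x$-variable, so after the substitution $x=\Phi(\xi)$, $y=\Phi(\eta)$ the exclusion set becomes $E_\eps^c(\xi)=\Phi^{-1}(B_\eps(\Phi(\xi)))$, which is \emph{not} centrally symmetric about $\xi$ (unless $\Phi$ is affine). Your appeal to ``symmetric under $\xi\leftrightarrow\eta$'' is the exchange symmetry $(\xi,\eta)\in A_\eps\Leftrightarrow(\eta,\xi)\in A_\eps$ of Lemma \ref{symmset}(i), which is used only for Fubini; it is \emph{not} the reflection $\eta\mapsto 2\xi-\eta$ that underlies the principal-value cancellation in Lemma \ref{sol1d}--\ref{solnd}. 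Bi-Lipschitz sandwiching between balls $B_{c_1\eps}(\xi)\subset E_\eps^c(\xi)\subset B_{c_2\eps}(\xi)$ with $c_1\neq c_2$ gives a symmetric difference of measure $\sim\eps^N$, which against the non-absolutely-integrable kernel yields no decay, so ``this argument carries over'' is not justified as written.

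The paper sidesteps this altogether, and this is precisely why Lemma \ref{symmset} was stated in such generality: it applies the lemma with $A_\eps=\{|\Phi^{-1}(x)-\Phi^{-1}(y)|<\eps\}$, which is a legitimate symmetric exclusion shrinking to the diagonal, and under the change of variables this becomes \emph{exactly} $B_\eps(X)$ in the $X$-coordinates. Then Lemma \ref{solnd} applies verbatim to the frozen-kernel term, with no comparison of exclusion sets required. If you insist on balls in the $x$-variable (and hence Corollary \ref{simplw} rather than Lemma \ref{symmset}), you would instead need to compare $E_\eps^c(\xi)$ with the ellipsoid $\{|D\Phi(\xi)(\eta-\xi)|<\eps\}$ (not a ball), note the boundaries differ by $O(\|D^2\Phi\|_\infty\eps^2)$ so the symmetric difference has measure $\sim\eps^{N+1}$, and combine this with the interior Lipschitz bound on $u$ near $\xi\in\R^N_+$ (not merely the global $s$-H\"older bound) to get a vanishing contribution; that supplemental argument is what is missing from your proposal. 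Your treatment of the remainder away from the diagonal (using $\Phi=I$ on $B_r^c$) is roughly right in spirit but also skips the separate estimate the paper gives for $|X-Y|>2r$ via the Taylor formula with integral remainder along segments crossing $\partial B_r$.
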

\begin{proof}
First we recall that, since $D\Phi$ is globally Lipschitz in $\R^N$ with constant $L>0$, 
then $D^2\Phi(x)$ exists in the classical sense for a.a. $x\in\R^N$, and $\|D^2\Phi\|_{L^\infty(\R^N)}\le L$. Let $J_\Phi(\cdot)=|{\rm det}\,D\Phi(\cdot)|$, $u_1(t)=t_+^s$. Due to Lemma \ref{symmset}, applied with $A_\eps=\{|\Phi^{-1}(x)-\Phi^{-1}(y)|<\eps\}$ it suffices to show that
\[g_\eps(x)=\int_{\{|\Phi^{-1}(x)-\Phi^{-1}(y)|\ge\eps\}}\frac{(v(x)-v(y))^{p-1}}{|x-y|^{N+ps}}\, dy\]
converges in $L^1(K)$ for any compact $K\subseteq \Phi(\R^N_+)$. Changing variables $x=\Phi(X)$, this is equivalent to claiming that
\beq
\label{claimgepsilon}
X\mapsto \int_{B_\eps^c(X)}\frac{(u_1(X_N)-u_1(Y_N))^{p-1}}{|\Phi(X)-\Phi(Y)|^{N+ps}}J_\Phi(Y)\, dY
\eeq
converges as $\eps\to 0$ in $L^1_{\rm loc}(\R^N_+)$. To prove this claim, we write
\beq\label{diffeo1}
\begin{split}
g_\eps(x) &= \int_{B_\eps^c(X)}\frac{(u_1(X_N)-u_1(Y_N))^{p-1}}{|D\Phi(X)(X-Y)|^{N+ps}}h(X, Y)\, dY\\
&\quad +\int_{B_\eps^c(X)}J_\Phi(X)\frac{(u_1(X_N)-u_1(Y_N))^{p-1}}{|D\Phi(X)(X-Y)|^{N+ps}}\, dY,
\end{split}
\eeq
where 
\[h(X, Y)=\frac{|D\Phi(X)(X-Y)|^{N+ps}}{|\Phi(X)-\Phi(Y)|^{N+ps}} J_\Phi(Y)-J_\Phi(X), \quad X\neq Y.\]
We will now prove the following estimate, from which convergence of \eqref{claimgepsilon} will follow:
\beq
\label{diffeo2}
|h(X, Y)|\leq C_\Phi\|D^2\Phi\|_\infty\min\{|X-Y|, 1\},
\eeq
where $C_\Phi$ depends on $N$, $p$, $s$ as well as on $\|D\Phi\|_\infty$, $\|D\Phi^{-1}\|_\infty$ and $r$. Write
\begin{align*}
h(X, Y) &=\frac{|D\Phi(X)(X-Y)|^{N+ps}}{|\Phi(X)-\Phi(Y)|^{N+ps}} (J_\Phi(Y)-J_\Phi(X))\\
&\quad +J_\Phi(X)\Big(\frac{|D\Phi(X)(X-Y)|^{N+ps}}{|\Phi(X)-\Phi(Y)|^{N+ps}} -1\Big)\\
&=:J_1+J_2.
\end{align*}
First observe that using Taylor formula yields
\[\frac{|D\Phi(X)(X-Y)|}{|\Phi(X)-\Phi(Y)|}\leq C\|D\Phi\|_\infty\|D\Phi^{-1}\|_\infty,\]
therefore
\[|J_1| \le \tilde{C}\|D^2\Phi\|_{L^\infty(\R^N)}|X-Y|.\]
To estimate $J_2$, we note that the mapping $t\mapsto t^{(N+ps)/2}$ is smooth in a neighborhood of $1$ and that
\[\lim_{Y\to X}\frac{|D\Phi(X)(X- Y)|^2}{|\Phi(X)-\Phi(Y)|^2}=1,\]
hence
\beq
\label{j2}
|J_2|\leq C_\Phi\Big(\frac{|D\Phi(X)(X-Y)|^{2}}{|\Phi(X)-\Phi(Y)|^{2}} -1\Big).
\eeq
Besides, for all $Y\in\R^N$ there exist $\tau_1,\ldots,\tau_N\in[0,1]$ such that
\[\Phi^i(X)-\Phi^i(Y)=D\Phi^i(\tau_i X+(1-\tau_i)Y)\cdot(X-Y), \quad i=1,\ldots,N,\]
where $\Phi^i$ denotes the $i$-th component of $\Phi$. So we have (still allowing $C_\Phi>0$ to depend on $\|D\Phi\|_{L^\infty(\R^N)}$)
\begin{align*}
&\big||\Phi(X)-\Phi(Y)|^2-|D\Phi(X)(X-Y)|^2\big| \\
&= \big|(\Phi(X)-\Phi(Y)+D\Phi(X)(X-Y))\cdot(\Phi(X)-\Phi(Y)-D\Phi(X)(X-Y))\big| \\
&\le C_\Phi|X-Y|\sum_{i=1}^N|\Phi^i(X)-\Phi^i(Y)-D\Phi^i(X)(X-Y)| \\
&\le C_\Phi|X-Y|^2\sum_{i=1}^N|D\Phi^i(\tau_i X+(1-\tau_i)Y)-D\Phi^i(X)| \\
&\le C_\Phi\|D^2\Phi\|_\infty|X-Y|^3.
\end{align*}
Inserting into \eqref{j2} we obtain
\begin{equation*}
|J_2| \le C_\Phi\frac{\big||D\Phi(X)(X-Y)|^2-|\Phi(X)-\Phi(Y)|^2\big|}{|\Phi(X)-\Phi(Y)|^2} 
\le C_\Phi\|D^2\Phi\|_\infty|X-Y|,
\end{equation*}
which yields
\[|h(X, Y)|\leq C_\Phi\|D^2\Phi\|_{L^\infty(\R^N)}|X-Y|,\quad \text{for all $X, Y\in \R^N$},\]
and thus \eqref{diffeo2} for $|X-Y|\leq 2r$. Assume now $|X-Y|>2r$, then at least one of $X$, $Y$ lies in $\overline B_r^c$. Clearly, if $X,Y\in\overline B_r^c$, then $h(X,Y)=0$. If $X\in\overline B_r$, $Y\in\overline B_r^c$, then for any $1\le i\le N$ we define a mapping $\eta_i\in C^{1,1}([0,1])$ by setting
\[\eta_i(t)=\Phi^i(X+t(Y-X)).\]
It is readily checked that $|\eta''_i|\leq C\|D^2\Phi\|_\infty|X-Y|^2$ for a.e.\ $t\in (0,1)$.
Moreover, if $t\geq 2r/|X-Y|$ then $X+t(Y-X)\in B_r^c$, and since $\Phi=I$ outside $B_r$ it holds 
\[\eta_i(t)=(X+t(Y-X))\cdot e_i\quad \text{for $t\geq \frac{2r}{|X-Y|}$}.\]
Therefore $\eta_i''(t)\equiv 0$ for $t\geq 2r/|X-Y|$ and applying the Taylor formula with integral remainder we have
\begin{align*}
&|\Phi^i(Y)-\Phi^i(X)+D\Phi^i(X)(X-Y)|\\
&=|\eta_i(1)-\eta_i(0)-\eta'_i(0)|\leq \int_0^1|\eta''_i(t)|(1-t)\,dt\\
&\leq \int_0^{2r/|X-Y|}|\eta''_i(t)|(1-t)\,dt\le C_\Phi\|D^2\Phi\|_\infty|X-Y|.
\end{align*}
So we have
\begin{align*}
&|h(X,Y)|\\
&\le \Big|\frac{|D\Phi(X)(X-Y)|^{N+ps}}{|\Phi(X)-\Phi(Y)|^{N+ps}}-1\Big|+|1-J_\Phi(X)| \\
&\le C_\Phi\Big|\frac{|D\Phi(X)(X-Y)|^2-|\Phi(X)-\Phi(Y)|^2}{|\Phi(X)-\Phi(Y)|^2}\Big|+C_\Phi\|D^2\Phi\|_\infty \\
&\le C_\Phi\frac{\big|D\Phi(X)(X-Y)+\Phi(X)-\Phi(Y)\big|}{|\Phi(X)-\Phi(Y)|^2}\\
&\quad \cdot\big|D\Phi(X)(X-Y)-\Phi(X)+\Phi(Y)\big|+C_\Phi\|D^2\Phi\|_\infty \\
&\le \frac{C_\Phi}{|X-Y|}\sum_{i=1}^N\big|D\Phi^i(X)(X-Y)-\Phi^i(X)+\Phi^i(Y)\big|+C_\Phi\|D^2\Phi\|_\infty \\
&\le C_\Phi\|D^2\Phi\|_\infty.
\end{align*}
If $X\in\overline B_r^c$, $Y\in\overline B_r$, we argue in a similar way. Thus \eqref{diffeo2} is achieved for all $X,Y\in\R^N$.
\vskip2pt
\noindent
Let us go back to \eqref{diffeo1}. The first integral can be estimated as follows:
\beq
\label{hsf}
\begin{split}
&\int_{B_\eps^c(X)}\Big|\frac{(u_1(X_N)-u_1(Y_N))^{p-1}}{|D\Phi(X)(X-Y)|^{N+ps}}h(X, Y)\Big|\, dY \\
&\leq C_\Phi\|D^2\Phi\|_\infty\int_{B_\eps^c(X)}\frac{\min\{|X-Y|,1\}}{|X-Y|^{N+s}}\, dY\\
&\leq C_\Phi\|D^2\Phi\|_\infty\Big(\int_\eps^1\frac{1}{t^{s}}\, dt+\int_1^{\infty}\frac{1}{t^{1+s}}\, dt\Big)\\
\noalign{\vskip2pt}
&\leq C_\Phi\|D^2\Phi\|_\infty(\eps^{1-s}+1).
\end{split}
\eeq
The second integral in \eqref{diffeo1} vanishes for $\eps\to 0$, and is estimated through Lemma \ref{solnd}: since $D\Phi(\R^N)$ is a compact subset of $GL_N$, the integral vanishes uniformly in any compact $\Phi^{-1}(K)\subseteq \R^N_+$, and therefore uniformly in any compact $K \subseteq \Phi(\R^N_+)$.  Lemma \ref{symmset} thus gives that $\psl v= f$ weakly in any open bounded $U\subseteq \Phi(\R^N_+)$, 
where 
\[f(x):=2\lim_{\eps\to 0}g_\eps(x).\]
Taking the limit for $\eps \to 0$ in estimate \eqref{hsf}  gives \eqref{stimaf}. 
\end{proof}

\noindent
Finally, we consider a general bounded domain $\Omega$ with a $C^{1,1}$ boundary. First we recall some geometrical properties, which can be found e.g. in \cite{AKSZ} (see figure \ref{geometry}):
\begin{figure}
\centering
\begin{tikzpicture}[scale=1.5]
\clip (-3.7,-0.1) rectangle (3.4,4.1);
\shade [left color=lightgray, right color=white, shading=axis, shading angle=180] (-3.7,0.3) to  [out=45, in=180] (0,2) to [out=0, in=200] (3.4,2) -- (3.4,4) -- (-3.7,4);
\draw (-3.4, 3) node[right]{$\Omega$};
\draw[thick] (-3.7,0.3) to  [out=45, in=180] (0,2) to [out=0, in=200] (3.4,2);		
\draw (0,1)  node[right]{$x_2$} circle (1cm);
\filldraw (0,1) circle (0.5pt);
\draw (1.3,0.5) node {$B_{\rho}(x_2)$};

\draw (1.3,2.5) node{$B_{\rho}(x_1)$};
\draw (0,3)  node[right]{$x_1$} circle (1cm);
\filldraw (0,3) circle (0.7pt);

\filldraw (0,2) circle (0.5pt);
\draw (0,2)  node[below right]{$x_0$};

\draw[thick] (0,2) -- (0, 3);

\end{tikzpicture}
\caption{The interior and exterior balls at $x_0\in \partial\Omega$. For all $x\in [x_0, x_1]$ it holds $\delta(x)=|x-x_0|$.}
\label{geometry}
\end{figure}
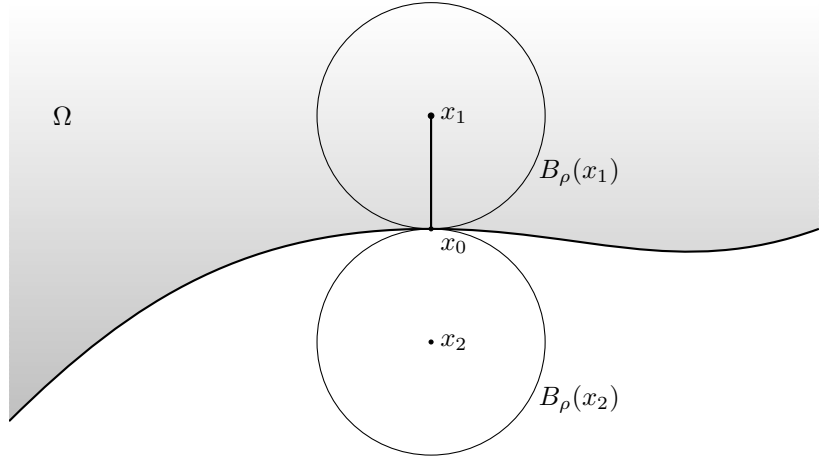

\begin{lemma}\label{geo1}
Let $\Omega\subset\R^N$ be a bounded domain with a $C^{1,1}$ boundary $\partial\Omega$. Then, there exists $\rho>0$ such that for all $x_0\in\partial\Omega$ there exist $x_1,x_2\in\R^N$ on the normal line to $\partial\Omega$ at $x_0$, with the following properties:
\begin{enumroman}
\item $B_{\rho}(x_1)\subset\Omega$, $B_\rho(x_2)\subset\Omega^c$;
\item $\overline B_\rho(x_1)\cap\overline B_\rho(x_2)=\{x_0\}$;
\item $\delta(x)=|x-x_0|$ for all $x\in[x_0,x_1]$.
\end{enumroman}
\end{lemma}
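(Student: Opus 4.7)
The statement is the uniform interior/exterior ball property for $C^{1,1}$ domains together with the identification of $\delta$ along the inner normal. I would prove it by localizing $\partial\Omega$ as a graph, using a quantitative quadratic bound on the graphing function, and choosing $\rho$ small relative to the uniform $C^{1,1}$ constant.

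\emph{Step 1 (uniform local graphs).} By the $C^{1,1}$ regularity of $\partial\Omega$ and compactness, there exist constants $r_0>0$ and $L\ge 1$ such that for every $x_0\in\partial\Omega$ one can perform a rigid motion sending $x_0$ to the origin and the inward unit normal $\nu(x_0)$ to $e_N$, after which
\[\partial\Omega\cap\bigl(B'_{r_0}\times(-r_0,r_0)\bigr)=\{(y',\varphi(y')):y'\in B'_{r_0}\}\]
for some $\varphi\in C^{1,1}(B'_{r_0})$ with $\varphi(0)=0$, $\nabla\varphi(0)=0$, $\|\nabla\varphi\|_{C^{0,1}(B'_{r_0})}\le L$; moreover $\Omega$ corresponds locally to $\{y_N>\varphi(y')\}$. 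A Taylor expansion then yields the crucial quadratic bound
\[|\varphi(y')|\le\tfrac{L}{2}|y'|^2,\qquad y'\in B'_{r_0}.\]

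\emph{Step 2 (choice of $\rho$ and centers).} Fix $\rho\in\bigl(0,\min\{r_0/2,1/(2L)\}\bigr)$, and for each $x_0\in\partial\Omega$ set
\[x_1=x_0+\rho\,\nu(x_0),\qquad x_2=x_0-\rho\,\nu(x_0),\]
so that $x_1,x_2$ lie on the inner normal line at $x_0$ on opposite sides.

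\emph{Step 3 (interior ball).} Working in the local chart of Step 1, $x_1=\rho e_N$. A point $y=(y',y_N)\in B_\rho(x_1)$ satisfies $|y'|<\rho<r_0$ and
\[y_N>\rho-\sqrt{\rho^2-|y'|^2}=\frac{|y'|^2}{\rho+\sqrt{\rho^2-|y'|^2}}\ge\frac{|y'|^2}{2\rho}.\]
Since $\rho L<1$, the quadratic bound of Step 1 gives $|\varphi(y')|\le\tfrac{L}{2}|y'|^2<\tfrac{1}{2\rho}|y'|^2\le y_N$, so $y\in\{y_N>\varphi(y')\}=\Omega$ locally; the choice $\rho<r_0/2$ keeps $B_\rho(x_1)$ inside the chart. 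Hence $B_\rho(x_1)\subset\Omega$. The argument for $B_\rho(x_2)\subset\Omega^c$ is symmetric, using $y_N<-(\rho-\sqrt{\rho^2-|y'|^2})\le-\tfrac{1}{2\rho}|y'|^2\le-|\varphi(y')|\le\varphi(y')$ would be the sign check.

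\emph{Step 4 (tangency at $x_0$ and item (iii)).} Since $|x_1-x_2|=2\rho$, any $y\in\overline B_\rho(x_1)\cap\overline B_\rho(x_2)$ must realize equality in the triangle inequality $|x_1-x_2|\le|x_1-y|+|y-x_2|\le 2\rho$, hence $y=(x_1+x_2)/2=x_0$. For (iii), let $x=x_0+t\,\nu(x_0)$ with $t\in[0,\rho]$. On the one hand $\delta(x)\le|x-x_0|=t$ since $x_0\in\partial\Omega$; on the other hand, for any $y$ with $|y-x|<t$, the triangle inequality gives $|y-x_1|\le|y-x|+|x-x_1|<t+(\rho-t)=\rho$, so $B_t(x)\subset B_\rho(x_1)\subset\Omega$, which yields $\delta(x)\ge t$. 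The two inequalities give $\delta(x)=|x-x_0|$.

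\emph{Main obstacle.} The only delicate point is getting the $\rho$ in Step 2 uniform over $x_0\in\partial\Omega$: this requires the uniformity of the constants $r_0$ and $L$ in the $C^{1,1}$ local parametrizations, which is the content of Step 1 and follows from the $C^{1,1}$ assumption on $\partial\Omega$ together with compactness; once this is in hand, the rest is the elementary comparison between the parabola $\tfrac{L}{2}|y'|^2$ and the spherical cap $\rho-\sqrt{\rho^2-|y'|^2}$.
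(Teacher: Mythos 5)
Your proof is correct and complete. Note, however, that the paper does not actually prove Lemma~\ref{geo1}: it is stated as a recalled geometrical fact and the reference \cite{AKSZ} is cited, so there is no proof in the paper to compare against. Your argument is the standard one for the uniform interior/exterior ball property of $C^{1,1}$ domains: localize the boundary as a graph with uniform constants $r_0,L$ (this uniformity is indeed where compactness of $\partial\Omega$ and the $C^{1,1}$ hypothesis are used), dominate $|\varphi|$ by the parabola $\tfrac{L}{2}|y'|^2$, compare with the spherical cap $\rho-\sqrt{\rho^2-|y'|^2}\ge \tfrac{|y'|^2}{2\rho}$ under $\rho L<1$, and obtain the tangency in $(ii)$ and the distance identity $(iii)$ by elementary triangle-inequality arguments. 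One cosmetic remark: in Step 3 the chain $|\varphi(y')|\le\tfrac{L}{2}|y'|^2<\tfrac{1}{2\rho}|y'|^2\le y_N$ degenerates at $y'=0$ (where the middle strict inequality becomes $0<0$); the desired conclusion $y_N>\varphi(y')$ still holds trivially there since $y_N>0=\varphi(0)$, but the step should be phrased so as not to assert a false strict inequality. Everything else — the choice $\rho<\min\{r_0/2,1/(2L)\}$ keeping the balls inside the chart, the exterior-ball case by sign reversal, the equality case in the triangle inequality for $(ii)$, and the two-sided bound on $\delta$ for $(iii)$ — is sound.
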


\noindent
As a byproduct, we prove that $\psl\delta^s$ is bounded in a neighborhood of the boundary.

\begin{theorem}\label{deltas}
Let $\Omega\subset\R^N$ be a bounded domain with a $C^{1,1}$ boundary. There exists $\rho=\rho(N, p, s, \Omega)$ such that $(-\Delta)_p^s\delta^s=f$ weakly in 
\[\Omega_\rho:=\{x\in \Omega:\delta(x)<\rho\},\]
for some $f\in L^\infty(\Omega_\rho)$.
\end{theorem}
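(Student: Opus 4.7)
The plan is to reduce locally to the half-space setting of Lemmas \ref{solnd}--\ref{lemmadiffeo}. At each $x_0\in\partial\Omega$ I would construct a global $C^{1,1}$ diffeomorphism $\Phi$ of $\R^N$ that is the identity outside a small ball and for which $\Phi^{-1}(x)\cdot e_N$ equals the signed distance $d$ to $\partial\Omega$ on a smaller ball around $x_0$. The model function $v(x):=(\Phi^{-1}(x)\cdot e_N)^s_+$ will then satisfy $\psl v\in L^\infty_{\rm loc}$ by Lemma \ref{lemmadiffeo} and coincide with $\delta^s$ near $x_0$. The discrepancy $\delta^s-v$, supported away from the small ball, will be treated by the non-local addition formula of Lemma \ref{psadd}, and compactness of $\partial\Omega$ will glue finitely many local $L^\infty$ bounds.

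After translation and Lemma \ref{hs}\ref{hs.3} I take $x_0=0$ with inward unit normal $e_N$. By Lemma \ref{geo1} the signed distance $d$ (positive in $\Omega$) is $C^{1,1}$ in a tubular neighborhood of $\partial\Omega$. If $\xi:V\subset\R^{N-1}\to\partial\Omega$ is a local $C^{1,1}$ chart with $\xi(0)=0$ and $n$ denotes the inward unit normal, the map
\[\Phi_{\rm loc}(y',y_N):=\xi(y')+y_N\, n(\xi(y'))\]
has $D\Phi_{\rm loc}(0)=I$ and is a $C^{1,1}$ local diffeomorphism near $0$ with $\Phi_{\rm loc}^{-1}(x)\cdot e_N=d(x)$ nearby. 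For $R>0$ small enough that $\|\Phi_{\rm loc}-I\|_{C^1(B_R)}$ is small, the interpolation
\[\Phi(y):=y+\chi(y)(\Phi_{\rm loc}(y)-y),\qquad \chi\in C^\infty_c(B_R),\ \chi\equiv 1\text{ on }B_{R/2},\]
is a global $C^{1,1}$ diffeomorphism of $\R^N$ equal to the identity outside $B_R$ and to $\Phi_{\rm loc}$ on $B_{R/2}$; hence $\Phi^{-1}(x)\cdot e_N=d(x)$ on some ball $B_{r'}(x_0)$. Setting $v(x):=(\Phi^{-1}(x)\cdot e_N)^s_+$, Lemma \ref{lemmadiffeo} gives $v\in\widetilde{W}^{s,p}_{\rm loc}(\R^N)$ and $\psl v=f_1$ weakly in $\Phi(\R^N_+)$ with $f_1\in L^\infty_{\rm loc}$.

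Since $v=\delta^s$ on $B_{r'}(x_0)$ (as $d=\delta$ in $\Omega$ and $(\cdot)^s_+=0=\delta^s$ in $\Omega^c$), the remainder $w:=\delta^s-v$ is supported in $B_{r'}(x_0)^c$, bounded on compact sets, and satisfies $|w(y)|\le C(1+|y|)^s$ at infinity (because $v(y)=(y_N)^s_+$ there while $\delta^s$ has compact support). Therefore $|w(y)|^{p-1}(1+|y|)^{-N-ps}\le C(1+|y|)^{-N-s}\in L^1(\R^N)$, verifying \eqref{hyppertv}, and $\mathrm{dist}(\mathrm{supp}(w),\Omega\cap B_{r'/2}(x_0))\ge r'/2>0$. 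Lemma \ref{psadd}, applied on $\Omega'=\Omega\cap B_{r'/2}(x_0)\subset\Phi(\R^N_+)$, then yields $\psl\delta^s=f_1+h$ weakly in $\Omega'$, with
\[h(x)=2\int_{\mathrm{supp}(w)}\frac{(v(x)-\delta^s(y))^{p-1}-(v(x)-v(y))^{p-1}}{|x-y|^{N+ps}}\,dy.\]
For $x\in\Omega'$ the integrand is dominated by a bounded function on $\mathrm{supp}(w)\cap B_M$ (since $|x-y|\ge r'/2$), and for $|y|>M$ the numerator grows at most like $|y|^{s(p-1)}$ while $|x-y|^{-N-ps}$ produces an integrable tail of order $-N-s$; hence $h\in L^\infty(\Omega')$. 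By compactness, finitely many balls $B_{r'_j/2}(x_{0,j})$ cover $\partial\Omega$, and their intersections with $\Omega$ cover $\Omega_\rho$ for some $\rho=\rho(N,p,s,\Omega)>0$; a standard partition-of-unity gluing of the local $L^\infty$ representations yields $\psl\delta^s=f\in L^\infty(\Omega_\rho)$ weakly.

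The central technical point is the construction of $\Phi$ in the second paragraph: it must be \emph{exactly} the identity outside a ball (so that Lemma \ref{lemmadiffeo} applies with the specific class of diffeomorphisms it requires), and it must simultaneously realize $\Phi^{-1}(\cdot)\cdot e_N\equiv d(\cdot)$ on a \emph{whole} neighborhood of $x_0$, so that $v$ literally coincides with $\delta^s$ there (not merely approximates it) and the perturbation $w$ is supported away from the region of interest. The cutoff interpolation between $\Phi_{\rm loc}$ and the identity achieves this only because $\Phi_{\rm loc}$ can be made $C^1$-close to the identity by shrinking the initial ball, which in turn preserves both the diffeomorphism property after interpolation and the exact identity $\Phi\equiv\Phi_{\rm loc}$ on $B_{R/2}$.
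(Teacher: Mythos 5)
Your proof is correct and follows the same overall strategy as the paper's: flatten the boundary locally via a $C^{1,1}$ diffeomorphism $\Phi$ that equals the identity outside a small ball, invoke Lemma \ref{lemmadiffeo} for the half-space model $v(x)=(\Phi^{-1}(x)\cdot e_N)_+^s$, and treat the discrepancy between $\delta^s$ and $v$ as a bounded remainder coming from far away. The organizational difference is genuine and worth noting: the paper applies Lemma \ref{symmset} directly to $\delta^s$, writing out the change of variables $X=\Phi^{-1}(x)$ by hand and re-using the internal claim \eqref{claimgepsilon} (not the stated conclusion) of Lemma \ref{lemmadiffeo} to handle the principal-value part, while packaging the tail correction as an explicit integral over $B_{3\rho}^c$ estimated by $s$-H\"older regularity. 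You instead observe that $v$ coincides with $\delta^s$ on a full neighborhood $B_{r'}(x_0)$ (on both sides of $\partial\Omega$, since $(d)_+^s=0=\delta^s$ outside $\Omega$), so that $w=\delta^s-v$ is supported at positive distance from $\Omega'=\Omega\cap B_{r'/2}(x_0)$ and satisfies the growth condition \eqref{hyppertv}; Lemma \ref{psadd} then transfers the weak identity from $v$ to $\delta^s$, with the correction $h$ bounded by the same type of tail estimate. Your route is more modular: it uses Lemma \ref{lemmadiffeo} and Lemma \ref{psadd} as black boxes and avoids repeating the computations of Lemma \ref{lemmadiffeo} inside the proof. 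A minor imprecision: for $1<p<2$ the numerator of $h$ grows like $|y|^s$ rather than $|y|^{s(p-1)}$ (by \eqref{in6} with $q=p-1<1$), giving a tail of order $|y|^{-N-s(p-1)}$; this is still integrable, so the conclusion $h\in L^\infty(\Omega')$ stands. The final gluing step does work despite the nonlinearity of $\psl$, because the weak formulation is linear in the test function $\varphi$, so a partition of unity subordinate to the finite cover lets you sum the local identities.
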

\begin{proof}
Suppose that $\rho$ is smaller than the one given in Lemma \ref{geo1}. We choose a finite covering of $\Omega_{\rho}$ made of balls of radius $2\rho$ and center $x_i\in \partial\Omega$. Using a partition of unity, it suffices to prove the statement in any set $\Omega \cap B_{2\rho}(x_i)$. To do so, we flatten the boundary near the point $x_i$, which we can suppose without loss of generality to be the origin. Choosing a smaller $\rho$ (depending only on the geometry of $\partial\Omega$) if necessary, there exists a diffeomorphism $\Phi\in C^{1,1}(\R^N,\R^N)$, $\Phi(X)=x$ such that $\Phi=I$ in $B_{4\rho}^c$ and
\beq
\label{hasd}
\Omega\cap B_{2\rho}\Subset \Phi(B_{3\rho}\cap \R^N_+),\quad \delta(\Phi(X))=(X_N)_+,\quad \forall X\in B_{3\rho}.
\eeq
We claim that 
\[g_\eps(x)=\int_{\{|\Phi^{-1}(x)-\Phi^{-1}(y)|\geq \eps\}}\frac{(\delta^{s}(x)-\delta^s(y))^{p-1}}{|x-y|^{N+ps}}\, dy\to f(x)
\quad\text{in $L^1_{\rm loc}(\Omega\cap B_{2\rho})$}.\]
We change variables setting $X=\Phi^{-1}(x)$, noting that $X\in B_{3\rho}\cap \R^N_+$ for any $x\in \Omega\cap B_{2\rho}$, and compute
\begin{align*}
g_\eps(x)&= \int_{\{|X-Y|\geq \eps\}}\frac{(\delta^{s}(\Phi(X))-\delta^s(\Phi(Y)))^{p-1}}{|\Phi(X)-\Phi(Y)|^{N+ps}}J_\Phi(Y)\, dY\\
&=\int_{B_\eps^c(X)\cap B_{3\rho}}\frac{(\delta^{s}(\Phi(X))-\delta^s(\Phi(Y)))^{p-1}}{|\Phi(X)-\Phi(Y)|^{N+ps}}J_\Phi(Y)\, dY\\
&+\int_{B_{3\rho}^c}\frac{(\delta^{s}(\Phi(X))-\delta^s(\Phi(Y)))^{p-1}}{|\Phi(X)-\Phi(Y)|^{N+ps}}J_\Phi(Y)\, dY\\
&=\int_{B_\eps^c(X)}\frac{ (u_1(X_N)-u_1(Y_N))^{p-1}}{|\Phi(X)-\Phi(Y)|^{N+ps}}J_\Phi(Y)\, dY\\
&\quad +\int_{B_{3\rho}^c}\frac{(\delta^{s}(\Phi(X))-\delta^s(\Phi(Y)))^{p-1}-(u_1(X_N)-u_1(Y_N))^{p-1}}{|\Phi(X)-\Phi(Y)|^{N+ps}}J_\Phi(Y)\, dY\\
&= f_{1,\eps}(X)+f_2(X),
\end{align*}
for sufficiently small $\eps$, where we used the fact that
\[\delta^s(\Phi(Z))=u_1(Z_N)\quad \text{for all $Z\in B_{3\rho}$}\]
thanks to \eqref{hasd}.
Clearly $f_2\circ \Phi^{-1}\in L^1(\Omega\cap B_{2\rho})$, and to estimate its $L^\infty$-norm we observe that, due to \eqref{hasd}, 
\[{\rm dist}(\Phi^{-1}(\Omega\cap B_{2\rho}), B_{3\rho}^c)>\theta_{\Phi, \rho}>0.\]
Then, using the $s$-H\"older regularity of $\delta^s\circ\Phi$ and $u_1$, and recalling that $\Phi^{-1}\in {\rm Lip}(\R^N)$ and \eqref{lkj}, we obtain
\begin{align*}
|f_2(X)|&\leq C_{\Phi, \rho}\int_{B^c_{3\rho}}\frac{|X-Y|^{s(p-1)}}{|X-Y|^{N+ps}}\, dY\\
&\leq C_{\Phi, \rho}\int_{\R^N}\frac{1}{(1+|Y|)^{N+s}}\, dY\\
&\leq C_{\Phi, \rho},\quad \forall X\in \Phi^{-1}(\Omega\cap B_{2\rho}).
\end{align*}
Regarding $f_{1,\eps}$, it coincides with the $g_\eps$ of \eqref{diffeo1}. Therefore claim \eqref{claimgepsilon} of Lemma \ref{lemmadiffeo} shows that the limit 
\[f_1(X):=\lim_{\eps\to 0}\int_{B_\eps^c(X)}\frac{(u_1(X_N)-u_1(Y_N))^{p-1}}{|\Phi(X)-\Phi(Y)|^{N+ps}}J_\Phi(Y)\, dY\]
holds in $L^1_{\rm loc}(\R^N_+)$, and $\|f_1\|_{\infty}\leq C_{\Phi, \rho}$. Therefore $g_\eps\to f_1\circ \Phi^{-1}+f_2\circ\Phi^{-1}$ in $L^1_{\rm loc}(\Omega\cap B_{2\rho})$, and both are bounded. Lemma \ref{symmset} finally gives the conclusion.
\end{proof}

\section{Barriers}\label{sec4}

\noindent
In this section we provide some barrier-type functions and prove {\em a priori} bounds for the bounded weak solutions of problem \eqref{dir}. We begin by considering the simple problem
\beq\label{dir1}
\begin{cases}
\psl v=1 & \text{in $B_1$} \\
v=0 & \text{in $B_1^c$.}
\end{cases}
\eeq
The following lemma displays some properties of the solution of \eqref{dir1}:

\begin{lemma}\label{cupola}
Let $v\in W^{s,p}_0(B_1)$ be a weak solution of \eqref{dir1}. Then, $v\in L^\infty(\R^N)$ is unique, radially non-increasing, and for all $r\in(0,1)$ it holds $\inf_{B_r}v>0$.
\end{lemma}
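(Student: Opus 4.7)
The plan is to treat the five assertions with three tools: the Comparison Principle, rotational invariance paired with a rearrangement argument, and a standard iteration scheme followed by a direct weak test. Uniqueness and nonnegativity follow immediately from Proposition~\ref{comp}: comparing two weak solutions $v_1,v_2$ of \eqref{dir1} in both directions gives $v_1=v_2$ a.e., and comparing $v$ with the subsolution $0$ (for which $\psl 0 = 0 \leq 1$) gives $v\geq 0$ on $\R^N$.

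Next, radial symmetry and monotonicity. For any $M\in O_N$, $M^{-1}B_1=B_1$ and the forcing is rotation invariant, so Lemma~\ref{hs}\ref{hs.3} shows that $v_M$ also solves \eqref{dir1}; the already-established uniqueness forces $v_M=v$, whence $v$ is radial. For the monotonicity, I would pass to the Schwarz symmetric decreasing rearrangement $v^*$ of $v$. Since $v\geq 0$ with $\mathrm{supp}(v)\subseteq\overline{B}_1$, also $v^*\in W^{s,p}_0(B_1)$; the fractional P\'olya--Szeg\H{o} inequality of Almgren--Lieb yields $J(v^*)\leq J(v)$, and equimeasurability gives $\int_{B_1} v^*=\int_{B_1} v$. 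But $v$ is the unique minimizer of the strictly convex functional $u\mapsto J(u)-\int_{B_1}u$ on $W^{s,p}_0(B_1)$, so $v=v^*$ a.e., i.e.\ $v$ is radially non-increasing.

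The $L^\infty$ bound I would derive by the standard De~Giorgi/Moser iteration for weak solutions of $\psl u=f$ with $f\in L^\infty$, as developed in \cite{DKP1,DKP2}. For the strict positivity, the just-proved monotonicity lets me write $v(x)=w(|x|)$ a.e.\ with $w$ nonincreasing, so $\{w>0\}=[0,r_0)$ for some $r_0\in[0,1]$; testing the weak equation against some $\varphi\in C^\infty_c(B_1)$ with $\int\varphi>0$ forbids $v\equiv 0$, hence $r_0>0$, and it suffices to rule out $r_0<1$. Assume by contradiction $r_0<1$, and test against a nonnegative $\varphi\in C^\infty_c(\{r_0<|x|<1\})$ with $\int\varphi>0$. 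Because $v$ vanishes both on $\mathrm{supp}\,\varphi$ and on $B_1^c$, the double integral on the left reduces (the contributions over $B_{r_0}\times B_{r_0}$ and over $B_{r_0}^c\times B_{r_0}^c$ both vanishing) to
\[
-2\int_{B_{r_0}\times \mathrm{supp}\,\varphi}\frac{v(x)^{p-1}\varphi(y)}{|x-y|^{N+ps}}\,dx\,dy\le 0,
\]
contradicting $\int_{B_1}\varphi\,dx>0$. Thus $r_0=1$, and for every $r\in(0,1)$ we get $\inf_{B_r}v=w(r)>0$.

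The step I expect to be the main obstacle is the last one: extracting a strict lower bound for $v$ on compact subsets of $B_1$ would normally go through interior continuity or a Harnack inequality, neither of which is in place at this stage of the paper. The test-function argument above avoids that entirely and keeps the proof within the weak-solution framework.
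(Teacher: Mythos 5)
Your proof is correct, and in two places it takes a genuinely different route than the paper.

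For uniqueness the paper argues from strict convexity and coercivity of the energy $J(u)-\int_{B_1}u$, while you go through Proposition~\ref{comp}; both work, and the paper still needs the minimizer characterization for the P\'olya--Szeg\H{o} step, as do you, so this is a superficial difference. The real divergences are the $L^\infty$ bound and the strict positivity. For boundedness the paper builds an explicit barrier $w(x)=\min\{(2-x_N)_+^s,5^s\}$, computes $\psl w$ via Lemmas~\ref{psadd} and \ref{solnd}, shows $\psl w\ge\alpha>0$ weakly in $B_1$, and concludes by comparison; this is self-contained and keeps the whole lemma inside the barrier framework the paper builds elsewhere. Your alternative is to cite the De~Giorgi/Moser-type boundedness from \cite{DKP1,DKP2}; that is a legitimate shortcut, but note that those results are stated locally, so you should say a word about passing to a global bound (e.g.\ via the zero exterior data plus a covering, or a Stampacchia truncation in $W^{s,p}_0(B_1)$). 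For positivity, your test-function argument --- choosing a nonnegative $\varphi$ supported in the annulus $\{r_0<|x|<1\}$ where $v$ vanishes, so the bilinear form collapses to the nonpositive cross term $-2\int_{B_{r_0}\times{\rm supp}\,\varphi} v(x)^{p-1}\varphi(y)|x-y|^{-N-ps}\,dx\,dy$ while the right-hand side is $\int\varphi>0$ --- is correct, elementary, and arguably cleaner than the paper's argument, which instead rescales $B_{r_0}$ back to $B_1$, invokes uniqueness and the homogeneity/scaling of Lemma~\ref{hs} to get $\psi(r_0^2)=c\,\psi(r_0)=0$, and contradicts the definition of $r_0$. Both work; yours avoids the scaling identity entirely and is a nice illustration of how the nonlocal term can be exploited directly. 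Your remark at the end, that a Harnack inequality is not yet available at this point of the paper and so must be avoided, is exactly right and is the constraint the paper's scaling argument is also designed to respect.
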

\begin{proof}
First we prove uniqueness. Let the functional $J:W^{s,p}_0(B_1)\to\R$ be defined by
\[J(u)=\frac{1}{p}\int_{\R^N\times\R^N}\frac{|u(x)-u(y)|^p}{|x-y|^{N+ps}}\,dx\,dy-\int_{B_1}u(x)\,dx.\]
$J$ is strictly convex and coercive, hence it admits a unique global minimizer $v\in W^{s,p}_0(B_1)$, which is the only weak solution of \eqref{dir1}. By Lemma \ref{hs} \ref{hs.3} we see that $v$ is radially symmetric, that is, $v(x)=\psi(|x|)$ for all $x\in\R^N$, where $\psi:\R_+\to\R_+$ is a mapping vanishing in $[1,\infty)$. Let $v^\#$ be the symmetric non-increasing rearrangement of $v$. By the fractional P\'olya-Szeg\"o inequality (see Theorem 3 of \cite{B}) we have $J(v^\#)\le J(v)$, so by uniqueness $v=v^\#$, that is, $\psi$ is non-increasing and continuous from the right in $\R_+$. Now let
\[r_0=\inf\{r\in(0,1]:\,\psi(r)=0\}.\]
Clearly $r_0\in(0,1]$. Arguing by contradiction, assume $r_0\in(0,1)$. Then $v\in W^{s,p}_0(B_{r_0})$ and 
it solves weakly
\[\begin{cases}
\psl v=1 & \text{in $B_{r_0}$} \\
v=0 & \text{in $B^c_{r_0}$.}
\end{cases}\]
Reasoning as above and using uniqueness and Lemma \ref{hs} \ref{hs.2}, we see that $v(x)=r_0^{-ps}v(r_0^{ps}x)$ in $B_{r_0}$, so
\[\psi(r_0^2)=r_0^{ps}\psi(r_0)=0,\]
with $r_0^2<r_0$, against the definition of $r_0$. So, for all $r\in(0,1)$ we have
\[\inf_{B_r} v = \psi(r) >0.\]
Finally, we prove that $v\in L^\infty(\R^N)$. Let $w\in C^s(\R^N)\cap \widetilde{W}^{s,p}(B_1)$ be defined by
\[w(x)=\min\{(2-x_N)_+^s,5^s\}.\]
Notice that $w(x)=(2-x_N)_+^s=u_1(2-x_N)$ for all $x\in B_2$. Thus we can apply Lemma \ref{psadd} in $B_{3/2}$ to 
\[w(x)=u_1(2-x_N)-(u_1(2-x_N)-5^s)_+\]
to get, by Lemma \ref{solnd} 
\begin{align*}
\psl w(x) &= 2\int_{\{y_N\le-3\}}\frac{((2-x_N)_+^s-5^s)^{p-1}-((2-x_N)_+^s-(2-y_N)_+^s)^{p-1}}{|x-y|^{N+ps}}\,dy\\
&=: I(x)
\end{align*}
weakly in $B_1$. The function $I:\bar B_1\to\R$ is continuous and positive, so there exists $\alpha>0$ such that
\[\psl w(x)\ge\alpha \ \text{weakly in $B_1$.}\]
We set $\tilde w=\alpha^{-1/(p-1)}w$, so we have
\[\begin{cases}
\psl v=1\le\psl\tilde w & \text{weakly in $B_1$} \\
v=0\le\tilde w & \text{in $B^c_1$,}
\end{cases}\]
and Proposition \ref{comp} yelds
\[0\le v \le\tilde w \le \frac{5^s}{\alpha^\frac{1}{p-1}},\quad \text{in $\R^N$},\]
so $v\in L^\infty(\R^N)$, concluding the proof.
\end{proof}

\noindent
Next we introduce {\em a priori} bounds for functions with bounded fractional $p$-Laplacian.

\begin{corollary}[$L^\infty$-bound]\label{apb}
Let $u\in W^{s,p}_0(\Omega)$ satisfy $|\psl u|\le K$ weakly in $\Omega$ for some $K>0$. Then
\[\|u\|_\infty\le (C_d K)^\frac{1}{p-1},\]
for some $C_d=C(N,p,s, d)$, $d={\rm diam}(\Omega)$.
\end{corollary}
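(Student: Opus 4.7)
The proof will consist of comparing $u$ to a suitably scaled copy of the positive barrier $v$ constructed in Lemma \ref{cupola}. The key observation is that the operator $\psl$ is translation invariant, so we may assume, up to a translation, that $\Omega\subseteq B_d$ with $d={\rm diam}(\Omega)$ (pick any $x_0\in\Omega$ and translate so that $x_0=0$, whence $\Omega\subseteq \overline B_d(0)$). Let $v\in W^{s,p}_0(B_1)$ be the unique weak solution of \eqref{dir1} given by Lemma \ref{cupola}, so that $v\in L^\infty(\R^N)$, $v\ge 0$ on $\R^N$, and $v=0$ outside $B_1$.

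Next I would rescale $v$ to fit $\Omega$. Setting $w(x):=v(x/d)$, Lemma \ref{hs} \ref{hs.2} gives $w\in \widetilde{W}^{s,p}(B_d)$ with $\psl w=d^{-ps}$ weakly in $B_d$, and $w$ still vanishes outside $B_d$. Applying now the homogeneity from Lemma \ref{hs} \ref{hs.1}, the function
\[
\tilde v:=(K\,d^{ps})^{\frac{1}{p-1}}\,w
\]
satisfies $\psl\tilde v=K$ weakly in $B_d\supseteq\Omega$, and $\tilde v\ge 0$ everywhere in $\R^N$.

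The final step is a double application of the comparison principle (Proposition \ref{comp}). On the one hand, $\psl u\le K=\psl\tilde v$ weakly in $\Omega$, and $u=0\le\tilde v$ on $\Omega^c$, so Proposition \ref{comp} gives $u\le\tilde v$ in $\Omega$. On the other hand, $-u$ satisfies $\psl(-u)\le K$ weakly in $\Omega$ and $-u=0\le\tilde v$ on $\Omega^c$, hence $-u\le\tilde v$ in $\Omega$ as well. Combining these,
\[
\|u\|_\infty\le\|\tilde v\|_\infty=(K d^{ps})^{\frac{1}{p-1}}\|v\|_\infty = (C_d K)^{\frac{1}{p-1}},
\qquad C_d:=d^{ps}\|v\|_\infty^{p-1},
\]
which is the claimed bound, with $C_d$ depending only on $N$, $p$, $s$ and $d$.

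There is no real obstacle here once Lemma \ref{cupola} is granted: the only points to verify are the non-negativity of the rescaled barrier on $\Omega^c$ (immediate from $v=v^\#\ge 0$) and the translation invariance of $\psl$, which is evident from the definition of weak solution (and not formally listed in Lemma \ref{hs}, but should be noted). One could equivalently take $w(x)=v((x-x_0)/d)$ for the appropriate $x_0\in\Omega$ and skip the translation step.
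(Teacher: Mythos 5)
Your proof is correct and follows the same strategy as the paper: compare $u$ (and $-u$) against the rescaled, homogeneity-adjusted barrier $\tilde v$ built from the solution $v$ of Lemma \ref{cupola} on $B_1$, then invoke Proposition \ref{comp}. The only cosmetic difference is that you spell out the scaling and homogeneity steps separately and translate so that some $x_0\in\Omega$ becomes the origin, whereas the paper picks $x_0$ with $\Omega\Subset B_d(x_0)$ directly; both yield the same $C_d=d^{ps}\|v\|_\infty^{p-1}$.
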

\begin{proof}
Let $v\in W^{s,p}_0(B_1)$ be as in Lemma \ref{cupola}, $x_0$ such that $\Omega\Subset B_d(x_0)$, and set
\[\tilde v(x)=(Kd^{ps})^\frac{1}{p-1}v\Big(\frac{x-x_0}{d}\Big).\]
By Lemma \ref{hs} \ref{hs.1}, \ref{hs.2} we have weakly
\[\begin{cases}
\psl u\le K=\psl\tilde v & \text{in $\Omega$} \\
u=0\le\tilde v & \text{in $\Omega^c$},
\end{cases}\]
which, by Proposition \ref{comp}, implies $u\le\tilde v$ in $\R^N$. A similar argument, applied to $-u$, gives the lower bound.
\end{proof}

\noindent
We can now produce (local) upper barriers on the complements of balls.

\begin{lemma}[Local upper barrier]\label{moon}
There exist $w\in C^s(\R^N)$, and universal $r>0$, $a\in (0,1)$, $c>1$ with
\[\begin{cases}
\psl w\ge a & \text{weakly in $B_r(e_N)\setminus\overline B_1$} \\
c^{-1}(|x|-1)_+^s\le w(x)\le c(|x|-1)_+^s & \text{in $\R^N$}.
\end{cases}\]
\end{lemma}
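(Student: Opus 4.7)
Following the three-step strategy outlined in Section \ref{sec1}, I would realise $w$ by bending the half-space solution $(X_N)_+^s$ from Lemma \ref{solnd} onto $B_1^c$ via a $C^{1,1}$-diffeomorphism, then reshaping the result by a truncation far from $e_N$.

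\emph{Step 1 (half-space deformation).} I would fix a $C^{1,1}$-diffeomorphism $\Phi:\R^N\to\R^N$, equal to the identity outside a ball, whose image of $\{X_N=0\}$ coincides with $\partial B_1$ on a neighborhood $B_{r_0}(e_N)$ of $e_N$, and such that $\Phi^{-1}(x)\cdot e_N$ is comparable to $|x|-1$ on $B_{r_0}(e_N)\cap B_1^c$. Setting $V(x):=(\Phi^{-1}(x)\cdot e_N)_+^s$, Lemma \ref{lemmadiffeo} gives $V\in\widetilde W^{s,p}_{\rm loc}(\R^N)\cap C^s(\R^N)$ and $\psl V=f$ weakly in $\Phi(\R^N_+)$, with $\|f\|_\infty\le C_1$ for a universal $C_1$. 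By construction, $V\asymp(|x|-1)_+^s$ on $B_{r_0}(e_N)\cap B_1^c$ and $V=0$ on $B_{r_0}(e_N)\cap\overline B_1$.

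\emph{Step 2 (globalization).} The function $V$ does not yet satisfy the two-sided comparison of the statement: outside $\Phi$-support it behaves like $(x_N)_+^s$, which can be nonzero inside $B_1\setminus B_{r_0}(e_N)$ and is unbounded at infinity. Fixing $r\in(0,r_0)$, I would write $w:=V-v'$, with $v'$ supported in $B_r(e_N)^c$, chosen to cancel $V$ on $\overline B_1\setminus B_{r_0}(e_N)$ and to reshape the far-field behaviour of $V$ into $(|x|-1)_+^s$. With a little care one obtains $w\in C^s(\R^N)$, $w=0$ on $\overline B_1$ and $c^{-1}(|x|-1)_+^s\le w(x)\le c(|x|-1)_+^s$ in $\R^N$. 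Since $\mathrm{dist}(\mathrm{supp}(v'),B_r(e_N))>0$, Lemma \ref{psadd} applies and gives $\psl w=f+h$ weakly in $B_r(e_N)\cap B_1^c$, where
\[h(x)=2\int_{\mathrm{supp}(v')}\frac{(V(x)-V(y)+v'(y))^{p-1}-(V(x)-V(y))^{p-1}}{|x-y|^{N+ps}}\,dy.\]

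\emph{Step 3 (positive lower bound).} The positivity $\psl w\ge a$ then reduces to showing $h\ge C_1+a$ uniformly in $B_r(e_N)\cap B_1^c$. In the degenerate range $p\ge 2$, inequality \eqref{in1} applied with $q=p-1$ to the portions of $\mathrm{supp}(v')$ where $v'(y)\ge 0$ yields
\[(V(x)-V(y)+v'(y))^{p-1}-(V(x)-V(y))^{p-1}\ge 2^{2-p}v'(y)^{p-1},\]
and where $v'(y)<0$ the opposite-sign contribution is controlled thanks to the boundedness of $V(x)$ and $w$ in the relevant region. Since $|x-y|$ stays bounded on $\mathrm{supp}(v')$ (which sits at positive distance from $B_r(e_N)$ but inside a bounded neighborhood of $e_N$) the positive term is a strictly positive universal constant that can be made larger than $C_1+a$ by tuning the magnitude of $v'$, i.e.\ by enlarging $c$. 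In the singular range $1<p<2$ the elementary inequality \eqref{in1} is not directly available, but the analogous lower bound follows combining \eqref{in2} with the concavity of $t\mapsto t^{p-1}$ on $\R_+$.

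\emph{Main obstacle.} The principal technical difficulty is to choose $v'$ so that (a) the two-sided comparison $c^{-1}(|x|-1)_+^s\le w\le c(|x|-1)_+^s$ holds globally (which in particular forces $w\equiv 0$ on $\overline B_1$ while $V$ does not vanish there), and (b) the contribution of $v'$ to $h$ is both sign-definite enough and large enough to absorb $|f|$. The singular case $1<p<2$ is a secondary obstacle, since one has to replace \eqref{in1} by a bound derived from \eqref{in2} and the concavity of $t\mapsto t^{p-1}$.
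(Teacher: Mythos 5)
The central gap is that you never make $\|f\|_\infty$ small, and without that the argument cannot close. Working directly at unit scale, the diffeomorphism $\Phi$ that flattens $\partial B_1$ near $e_N$ necessarily has $\|D^2\Phi\|_\infty$ of order $1$ (the curvature of the unit sphere), so Lemma \ref{lemmadiffeo} only yields $\|f\|_\infty\le C_1$ for a \emph{fixed} universal $C_1$ with no reason to be small. Meanwhile the positive gain one can extract by truncating $V$ far from the region of interest is a fixed universal constant as well (it cannot be made arbitrarily large: the truncation level must stay above $\sup V$ on a neighbourhood of $e_N$ so that the perturbation is supported away from where the comparison is needed, and the gain is then a single integral of a bounded integrand over a fixed set). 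There is no mechanism to arrange $h\ge C_1+a$, and ``tuning the magnitude of $v'$ by enlarging $c$'' does not help, because $v'=V-w$ is pinned down, up to bounded constants, by $V$ and by the two-sided comparison you impose on $w$; enlarging $c$ loosens the comparison but does not give you freedom to amplify $v'$ where it matters. The paper sidesteps exactly this issue by working with a ball of large radius $R$ and the boundary point $\bar x_R$, so that $\|\Phi-I\|_{C^{1,1}}\lesssim 1/R$, hence $\|f\|_\infty\lesssim 1/R$; only then can the fixed positive gain $\alpha$ from truncation absorb $\|f\|_\infty$ for $R$ large, and the final barrier at unit scale is obtained by the scaling $w(x)=v_\eps(\tilde x_R+Rx)$ via Lemma \ref{hs}.

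There are two further problems. First, you claim $\mathrm{supp}(v')$ lies in a bounded neighbourhood of $e_N$, but this is incompatible with its role: for $|x|$ large with $x_N<0$, $V(x)=0$ while the required lower bound forces $w(x)\gtrsim(|x|-1)^s>0$, so $v'=V-w$ must be nonzero at infinity. In particular the estimate ``$|x-y|$ stays bounded on $\mathrm{supp}(v')$'' fails, and the negative tail contribution to $h$ is not controlled by your argument. Second, collapsing both modifications (kill $V$ on $\overline B_1$, and lift $w$ above $\eps(|x|-1)^s$ at infinity) into a single correction $v'$ destroys the sign structure the paper relies on: the paper truncates \emph{from above} (perturbation $\le 0$, producing a strictly positive contribution to $\psl$) and then separately takes a maximum with $\eps d_R^s$ (perturbation $\ge 0$, producing a \emph{small} negative contribution estimated via \eqref{in6} with $J(\eps)\to 0$). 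Your $v'$ changes sign, and the competition between the two pieces of $h$ is left unjustified.
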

\begin{proof}
By translation, rotation invariance and scaling (Lemma \ref{hs}), it suffices to prove the statement for any fixed ball of radius $R>2$, at any fixed point $\bar x_R$ of its boundary. To fix ideas, we set $\tilde x_R=(0,-(R^2-4)^{1/2})$ and $\bar x_R=\tilde x_R+Re_N$, so that $B_R(\tilde x_R)$ intersects the hyperplane $\R^{N-1}\times\{0\}$ in the $(N-1)$-ball $\{|x'|<2\}$ (we use the notation $x=(x',x_N)\in\R^{N-1}\times\R$).
\vskip2pt
\noindent
In the following we will choose $R$ large enough, depending only on $N, p, s$. If $R>2$, we can find $\varphi\in C^{1,1}(\R^{N-1})$ such that $\|\varphi\|_{C^{1,1}(\R^{N-1})}\le C/R$ and
\[\varphi(x')=\big((R^2-|x'|^2)^{1/2}-(R^2-4)^{1/2}\big)_+ \ \text{for all $|x'|\in[0,1]\cup[3,\infty)$.}\]
We set
\[U_+=\{x\in\R^N:\,\varphi(x')<x_N\}\]
(see figure \ref{figball}).
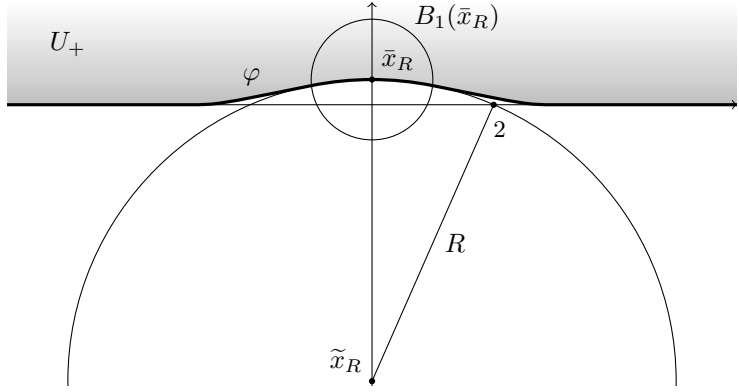
\begin{figure}
\centering
\begin{tikzpicture}[scale=0.8]
\clip (-6, -4.65) rectangle (6, 1.7);

\shade [left color=lightgray, right color=white, shading=axis, shading angle=180] (-6, 0) -- (-3, 0) .. controls (-2.55, 0) .. node[above right=3pt]{$\varphi$} (-1, 0.3164)  arc (101.537:78.463:5)  .. controls (2.55,0) .. (3,0) -- (6,0) -- (6,1.7) -- (-6, 1.7) -- (-6,0);
\draw[very thick] (-6, 0) -- (-3, 0) .. controls (-2.55, 0) ..  (-1, 0.3164)  arc (101.537:78.463:5)  .. controls (2.55,0) .. (3,0) -- (6,0); 
\draw[->] (-6, 0) -- (6, 0); 
\draw[->] (0, -5) -- (0, 1.7) ; 
\draw (0, -4.582) circle (5cm); 
\filldraw (0, -4.582) circle (1.2pt); 
\draw (0, -4.582) node[above left]{$\widetilde{x}_R$} -- node[right=1pt]{$R$} (2,0);

\filldraw (2, 0) circle (1.2pt); 
\draw (2.1, -0.4) node{\small $2$};

\draw (0, 0.4174) circle (1cm);
\draw (1.4,1.45) node{$B_1(\bar x_R)$};

\draw (-5, 1) node{$U_+$};
\filldraw (0, 0.4174)  circle (1.2pt); 
\draw  (0, 0.4174) node[above right]{$\bar x_R$};

\end{tikzpicture}
\caption{The balls $B_R(\widetilde{x}_R)$ and $B_1(\bar x_R)$. The thick line is the graph of $\varphi$, whose epigraph is $U_+$.}
\label{figball}
\end{figure}
We claim that for any sufficiently large $R$ there exists a diffeomorphism $\Phi\in C^{1,1}(\R^N,\R^N)$ such that $\Phi(0)=\bar x_R$, $\Phi=I$ in $B^c_4$, and
\beq
\label{propPhi}
\|\Phi-I\|_{C^{1,1}(\R^N,\R^N)}+\|\Phi^{-1}-I\|_{C^{1,1}(\R^N,\R^N)}\le\frac{C}{R}, \ \Phi(\R^N_+)=U_+.
\eeq
Indeed, let $\eta\in C^2(\R)$ satisfy $\eta\in [0,1]$, $\eta(0)=1$, ${\rm supp}\,  \eta\subseteq (-1,1)$. Set for all $X=(X',X_N)\in\R^{N-1}\times\R$
\[\Phi(X)=X+\varphi(X')\eta(X_N)e_N.\]
Then, for sufficiently large $R$, $\Phi\in C^{1,1}(\R^N,\R^N)$ is a bijection since $\Phi(X_1)=\Phi(X_2)$ implies $X_1'=X_2'$, and the map $t\mapsto t+\varphi(X')\eta(t)$ is increasing whenever
\[\sup_{X\in \R^N}\varphi(X')|\eta'(X_N)|=\frac{4\sup_{\R}|\eta'|}{R+\sqrt{R^2-4}}< 1.\]
Its inverse mapping satisfies
\beq\label{moon2}
\Phi^{-1}(x)=x-\varphi(x')\eta(\Phi^{-1}(x)\cdot e_N)e_N \ \text{for all $x\in\R^N$,}
\eeq
besides $\Phi(0)=\bar x_R$. Moreover, for all $X\in B^c_4$ we have either $|X'|\ge 3$ or $|X_N|\ge 1$, in both cases $\Phi(X)=X$. The $C^{1,1}$-bounds on $\varphi$, $\eta$ and \eqref{moon2} yield the required $C^{1,1}$-bounds on $\Phi-I$ and $\Phi^{-1}-I$. Finally, reasoning as above, the monotonicity of $t\mapsto t+\varphi(X')\eta(t)$ implies that $\Phi(\R^N_+)=U_+$, and \eqref{propPhi} is proved.
\vskip2pt
\noindent
Let $v_1(x)=u_1(\Phi^{-1}(x)\cdot e_N)$. Lemma \ref{lemmadiffeo} ensures that $v_1\in \widetilde W^{s,p}_{{\rm loc}}(\R^N)$ and
\beq
\label{pslv1}
\psl v_1=f\quad \text{weakly in $U_+$, with $\|f\|_\infty\leq C/R$}.
\eeq
Define 
\[v(x)=\min\{v_1(x), 5^s\},\]
which belongs to $\widetilde{W}^{s,p}(B_4)$. From $\Phi=I$ in $B_4^c$ we infer $\Phi^{-1}(B_4)=B_4$ and thus 
\[v_1(x)=u_1(x_N)\quad\text{in $B_4^c$},\quad v_1\leq 4^s\quad\text{in $B_4$}. \]
Hence
\[v_1(x)-v(x)= (x_N)_+^s-5^s\quad\text{in $\{x_N\geq 5\}$},\quad v_1-v=0\quad\text{in $\{x_N\leq 5\}\Supset B_4$}.\]
Thus the function $v-v_1$ satisfies conditions \eqref{hyppertv} in $B_4$, and Lemma \ref{psadd} provides weakly in $B_4$
\[\psl v=\psl (v_1+(v-v_1))=f+g,\]
where 
\begin{align*}
g(x)&=2\int_{B_4^c}\frac{(v_1(x)-v(y))^{p-1}-(v_1(x)-v_1(y))^{p-1}}{|x-y|^{N+ps}}\, dy\\
&\geq 2\int_{\{y_N\geq 5\}}\frac{((x_N)_+^s-5^s)^{p-1}-((x_N)_+^s-(y_N)_+^s)^{p-1}}{|x-y|^{N+ps}}\, dy
\end{align*}
for any $x\in B_4$. As in the proof of Lemma \ref{cupola}, there is a universal $\alpha>0$ such that $g(x)\geq \alpha$ for all $x\in B_4$, and therefore using \eqref{pslv1} we have
\[\psl v\geq f+g\geq \alpha - \frac{C}{R}\quad \text{weakly in $U_+\cap B_4$}.\]
Taking  $R$ big enough we thus find $B_2(\bar x_R)\Subset B_4$ and 
\beq
\label{moon6}
\psl v\geq \frac{\alpha}{2}>0,\quad \text{weakly in  $U_+\cap B_2(\bar x_R)$}.
\eeq
Set for all $x\in\R^N$
\[d_R(x)=(|x-\tilde x_R|-R)_+.\]
We can estimate $v$ by multiples of $d_R^s$ {\em globally} from above but only {\em locally} from below. Indeed, since $v=0$ in $U_+^c$, $B_R(\tilde x_R)\subset U^c_+$, and $v\in C^s(\R^N)$, there exists $\tilde c>1$ such that
\beq\label{moon7}
v(x)\leq \tilde c\, {\rm dist}(x,U^c_+)^s\leq \tilde c\, d_R^s(x), \,\,\,\,\, \text{for all $x\in \R^N$.}
\eeq
On the other hand, for all $x\in B_{1}(\bar x_R)$ it holds either $x\in B_1(\bar x_R)\setminus U_+\subseteq B_R(\tilde x_R)$, in which case $d_R^s(x)=0=\tilde c v(x)$, or $x\in B_1(\bar x_R)\cap U_+\subseteq B_R^c(\tilde x_R)$. In the latter case let $X=(X', X_N)$ be such that $x=\Phi(X)$, $Z=(X', 0)$ and $z=\Phi(Z)$. It holds $|X'|\leq 1$ and by the construction of $\Phi$, it follows that $z\in \partial B_R(\tilde x_R)$, therefore
\[d_R^s(x)\leq |x-z|^s\le\tilde c|X-Z|^s=\tilde cX_N^s=\tilde cv(x).\]
Thus we have (taking $\tilde c>1$ bigger if necessary)
\beq\label{moon8}
v\ge\frac{1}{\tilde c}d_R^s \ \text{in $B_{1}(\bar x_R)$.}
\eeq
We aim at extending \eqref{moon8} to the whole $\R^N$, while retaining \eqref{moon6} and \eqref{moon7}. For any $\eps\in(0,1/\tilde c)$ set
\[v_\eps=\max\{v,\eps d_R^s\}.\]
Clearly $v_\eps$ satisfies estimates like \eqref{moon7} and \eqref{moon8} in $\R^N$ with a constant $\tilde c_\eps=\max\{\tilde c+\eps,\eps^{-1}\}$. Besides $v\le v_\eps\le v+\eps d_R^s$ in $\R^N$, being $\eps<1/\tilde{c}$, $v_\eps-v=0$ in $B_1(\bar x_R)$. So, by \eqref{moon6}, Lemma \ref{psadd} and \eqref{in6} (with $M=5^s$ and $q=p-1$)
\begin{align*}
\psl v_\eps(x)
&= \psl v(x)-2\int_{B^c_{1/2}(\bar x_R)}\frac{(v(x)-v(y))^{p-1}-(v(x)-v_\eps(y))^{p-1}}{|x-y|^{N+ps}}\,dy \\
&\ge \frac{\alpha}{2}-C\int_{B^c_{1}(\bar x_R)}\frac{\max\{\eps d_R^s(y),(\eps d_R^s(y))^{p-1}\}}{|\bar x_R-y|^{N+ps}}\,dy \\
&\ge \frac{\alpha}{2}-CJ(\eps)
\end{align*}
weakly in $B_{1/2}(\bar x_R)\cap U_+$ (in the end we have used the inequality $|x-y|\ge 1/2|\bar x_R-y|$ for all $x\in B_{1/2}(\bar x_R)$, $y\in B^c_{1}(\bar x_R)$). Notice that $J(\eps)\to 0$ as $\eps\to 0^+$ independently of $x$, thus, for $\eps>0$ small enough we have
\[\psl v_\eps(x)\ge\frac{\alpha}{4}>0 \ \text{weakly in $B_{1/2}(\bar x_R)\setminus B_R(\tilde x_R)$.}\]
To obtain the barrier of the thesis, we set $w(x)=v_\eps(\tilde{x}_R+Rx)$ and using Lemma \ref{hs} we reach the conclusion for $r=1/(2R)$, $a=\alpha/(4R^{ps})$, $c=R^s\max\{\tilde c+\eps,\eps^{-1}\}$.
\end{proof}

\noindent
Finally, we prove that any bounded weak solution of \eqref{dir} can be estimated by means of a multiple of $\delta^s$.

\begin{theorem}\label{estid}
Let $u\in W^{s,p}_0(\Omega)$ satisfy $|\psl u|\le K$ weakly in $\Omega$ for some $K>0$. Then
\beq
\label{thm44tesi}
|u|\le (C_\Omega K)^\frac{1}{p-1}\delta^s \quad \text{a.e.\ in $\Omega$,}
\eeq
for some $C_\Omega=C(N,p,s,\Omega)$.
\end{theorem}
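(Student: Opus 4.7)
The plan is to reduce to $K=1$ via the homogeneity in Lemma \ref{hs}\ref{hs.1} and then prove $|u(x)|\le C_\Omega\delta(x)^s$. Corollary \ref{apb} already gives a uniform bound $\|u\|_\infty\le M_0$ with $M_0=M_0(\Omega,N,p,s)$, so the estimate is automatic at points with $\delta(x)\ge\rho/2$, where $\rho=\rho(\Omega)$ comes from Lemma \ref{geo1}. The real content is a point-by-point barrier argument in the strip $\Omega_{\rho/2}=\{x\in\Omega:\delta(x)<\rho/2\}$.

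Fix $x\in\Omega_{\rho/2}$ and choose $x_0\in\partial\Omega$ attaining $\delta(x)=|x-x_0|$. Lemma \ref{geo1} furnishes the exterior ball $B_\rho(x_2)\subset\Omega^c$ tangent at $x_0$, with $x$ on the inward normal segment so that $|x-x_2|-\rho=\delta(x)$. Let $w$ be the local upper barrier of Lemma \ref{moon}, and let $R\in O_N$ send $e_N$ to $(x_0-x_2)/\rho$. Translating, rotating and rescaling via Lemma \ref{hs}, I would define
\[
\tilde w(y)=A\,w\!\bigl(R^{-1}(y-x_2)/\rho\bigr),
\]
whose weak fractional $p$-Laplacian satisfies $\psl\tilde w\ge A^{p-1}\rho^{-ps}a$ in $B_{r\rho}(x_0)\setminus\overline{B_\rho(x_2)}$ and which inherits the two-sided growth
\[
c^{-1}A\rho^{-s}(|y-x_2|-\rho)_+^s\le\tilde w(y)\le cA\rho^{-s}(|y-x_2|-\rho)_+^s.
\]
I would then apply the comparison Proposition \ref{comp} on the bounded open set $\Omega':=\Omega\cap B_{r\rho}(x_0)\subseteq B_{r\rho}(x_0)\setminus\overline{B_\rho(x_2)}$. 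The weak inequality $\psl u\le 1\le\psl\tilde w$ holds in $\Omega'$ provided $A^{p-1}\rho^{-ps}a\ge 1$, while $u\le\tilde w$ holds on $(\Omega')^c$ as soon as $\tilde w\ge M_0$ on $\Omega\setminus B_{r\rho}(x_0)$ (it is trivial on $\Omega^c$, where $u\equiv 0\le\tilde w$). Both conditions can be met simultaneously by a single choice $A=A(\Omega,N,p,s)$, and the comparison then yields
\[
u(x)\le\tilde w(x)\le cA\rho^{-s}(|x-x_2|-\rho)^s=C_\Omega\delta(x)^s.
\]
Repeating the argument for $-u$ (which satisfies the same hypothesis since $\psl$ is odd) gives the two-sided estimate.

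The step I expect to be the main obstacle is justifying that the second requirement on $A$, namely $\tilde w\ge M_0$ on $\Omega\setminus B_{r\rho}(x_0)$, can indeed be met uniformly in $x_0$. This reduces to a geometric separation claim:
\[
\inf_{x_0\in\partial\Omega}\ \inf_{y\in\Omega\setminus B_{r\rho}(x_0)}\bigl(|y-x_2(x_0)|-\rho\bigr)\ge c_0\rho
\]
for some $c_0=c_0(\Omega)>0$, which then forces $\tilde w(y)\ge Ac^{-1}c_0^s$ outside $B_{r\rho}(x_0)$ and bounds $M_0$ provided $A\ge M_0\, c\, c_0^{-s}$. I would establish this by continuity and compactness: the map $(x_0,y)\mapsto|y-x_2(x_0)|-\rho$ is continuous and strictly positive on the compact set $\{(x_0,y):x_0\in\partial\Omega,\ y\in\overline\Omega\setminus B_{r\rho}(x_0)\}$, since $y\in\overline\Omega$ forces $|y-x_2|\ge\rho$ with equality only at $y\in\partial B_\rho(x_2)\cap\overline\Omega=\{x_0\}$, a point excluded by $y\notin B_{r\rho}(x_0)$. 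Once this uniform geometric input is in hand, all remaining steps---the pointwise bounds on $\tilde w$, the selection of $A$, and the application of the comparison principle---are routine.
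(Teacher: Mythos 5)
Your overall plan---reduce to $K=1$ by homogeneity, use the $L^\infty$ bound of Corollary \ref{apb} away from the boundary, then compare $u$ with a rescaled version of the barrier from Lemma \ref{moon} near the boundary---is exactly the paper's plan. The gap is at the geometric separation claim you flagged, and it is a real one.

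You assert that $\partial B_\rho(x_2)\cap\overline\Omega=\{x_0\}$. This is false in general. Take a spherical shell $\Omega=B_R\setminus\overline B_\rho(0)$ with $R>3\rho$: Lemma \ref{geo1} holds with this $\rho$ for both the interior and exterior uniform ball conditions, and for $x_0$ on the inner sphere $\partial B_\rho(0)$ the exterior ball is $B_\rho(0)$ itself, so $x_2=0$ and $\partial B_\rho(x_2)\cap\overline\Omega=\partial B_\rho(0)$, the entire inner sphere. Your barrier $\tilde w$, which is comparable to $(|y-x_2|-\rho)_+^s$, therefore vanishes on the whole inner sphere; for $y$ antipodal to $x_0$ on $\partial B_\rho(0)$ one has $y\in\overline\Omega\setminus B_{r\rho}(x_0)$ and yet $\tilde w(y)=0$. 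The infimum in your separation claim is thus $0$, not a positive multiple of $\rho$, and the condition $\tilde w\ge M_0$ on $\Omega\setminus B_{r\rho}(x_0)$ cannot be enforced by any choice of $A$. The compactness argument collapses precisely because the compact set does contain points where the function vanishes.

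The paper's fix is to build the barrier on a strictly smaller exterior ball nested inside $B_\rho(x_2)$. At $x_0\in\partial\Omega$, in addition to $B_\rho(x_2)\subset\Omega^c$ from Lemma \ref{geo1}, one takes $B_{\rho/2}(x_1)$ with $x_1$ the point on the segment $[x_0,x_2]$ at distance $\rho/2$ from $x_0$; this ball is still tangent to $\partial\Omega$ at $x_0$, lies inside $B_\rho(x_2)$, and is internally tangent to it. The rescaled Lemma \ref{moon} barrier $w$ is centered on $B_{\rho/2}(x_1)$, so $c^{-1}d^s\le w\le c\,d^s$ with $d(x)=(|x-x_1|-\rho/2)_+$. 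The separation is then a statement about two fixed concentric balls, independent of $\Omega$: since $\Omega\subset B_\rho^c(x_2)$ and $\partial B_{\rho/2}(x_1)\cap B_\rho^c(x_2)=\{x_0\}$, one gets $d\ge\theta>0$ on $B_\rho^c(x_2)\setminus B_{r\rho/2}(x_0)$ with $\theta$ depending only on $\rho$ and $r$, and in particular $w\ge c^{-1}\theta^s>0$ on $\Omega\setminus B_{r\rho/2}(x_0)$. Your argument would go through verbatim if you replaced the ball $B_\rho(x_2)$ defining $\tilde w$ by any strictly smaller exterior ball $B_{\rho'}(x_2')$, $\rho'<\rho$, tangent at $x_0$ and contained in $B_\rho(x_2)$; using the exterior ball of full radius $\rho$ is the single step that fails.
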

\begin{proof}
Considering $u/K^{1/(p-1)}$ and using homogeneity, we can prove \eqref{thm44tesi} in the case $K=1$. Thanks to Corollary \ref{apb} we may focus on a neighborhood of $\partial\Omega$. Let $\rho>0$ be as in Lemma \ref{geo1}, and let $r\in (0,1)$ be defined in Lemma \ref{moon}. Set
\[U=\Big\{x\in\Omega:\,\delta(x)<r\frac{\rho}{2}\Big\},\]
$\bar x\in U$ and $x_0=\Pi(\bar x)\in \partial\Omega$ its point of minimal distance from $\Omega^c$.
There exists two balls $B_{\rho/2}(x_1)$ and $B_{\rho}(x_2)$ exteriorly tangent to $\partial\Omega$ at $x_0$, and (by scaling and translating the supersolution of the previous Lemma \ref{moon}) a function $w\in C^s$ such that
\beq\label{estid2}
\psl w\ge a  \quad \text{weakly in $B_{r\rho/2}(x_0)\setminus B_{\rho/2}(x_1)$}
\eeq
and
\beq
\label{estid3}
c^{-1}d^s(x)\le w(x)\le cd^s(x)  \quad \text{in $\R^N$}.
\eeq
where we have set
\[d(x)={\rm dist}(x,B^c_{\rho/2}(x_1)).\]
Notice that the constants in \eqref{estid2} \eqref{estid3} depend only on $\rho$, $N$, $p$ and $s$, and we will suppose henceforth that $a, r, c^{-1}\in (0,1)$.
By Lemma \ref{geo1} it holds
\beq\label{estid2b}
d(\bar x)=\delta(\bar x)=|\bar x-x_0|,
\eeq
moreover 
\[d(x)\ge \theta>0,\quad \text{in  $B_{\rho}^c(x_2)\setminus B_{r\rho/2}(x_0)$}.\]
for a constant $\theta$ which depends only on $\rho$ and $r$ (and thus on $\Omega$ alone).
Since $\Omega\subseteq B_{\rho}^c(x_2)$, the latter inequality together with \eqref{estid3} provides
\beq
\label{estid4}
w\geq c^{-1}\theta^s=:\alpha>0,\quad \text{in $\Omega\setminus B_{r\rho/2}(x_0)$}.
\eeq
We define the open set
\[V=\Omega\cap B_{r\frac{\rho}{2}}(x_0)\subseteq B_{r\frac{\rho}{2}}(x_0)\setminus B_{\rho/2}(x_1),\]
where we will apply the comparison principle. Suppose without loss of generality that in \eqref{estid4} $\alpha\in (0,1)$ and let $C_d>1$ be as in Corollary \ref{apb}. Set
\[M=\frac{1}{\alpha}\Big(\frac{C_d}{a}\Big)^\frac{1}{p-1}, \quad \bar w=Mw.\]
By \eqref{estid2} and $C_d/\alpha^{p-1}\geq 1$ we have
\[\psl \bar w=M^{p-1}\psl w\geq \frac{C_d}{\alpha^{p-1}}\geq 1\geq \psl u,\quad \text{weakly in $V$}.\]
Moreover $u=0\leq \bar w$ in $\Omega^c$, while \eqref{estid4}, $a<1$ and Corollary \ref{apb} give 
\[\bar w\geq  M\alpha=\Big(\frac{C_d}{a}\Big)^\frac{1}{p-1}\geq \sup_{\Omega} u,\quad \text{in $\Omega\setminus B_{r\rho/2}(x_0)$}.\] 
Therefore  $\bar w\geq u$ in the whole $V^c$, and Proposition \ref{comp} together with \eqref{estid3} yelds
\[u(x)\leq \bar w(x)\leq cMd^s(x)\quad \text{for a.e.\ $x\in \R^N$}.\]
Recalling \eqref{estid2b} we get
\[u(\bar x)\leq cMd^s(\bar x)=cM\delta^s(\bar x)\quad \text{for all $\bar x=x_0-tn_{x_0}$, $t\in\Big[0, r\frac{\rho}{2}\Big]$},\]
where $n_{x_0}$ is the exterior normal to $\partial\Omega$ at $x_0$, which gives the thesis since $cM$ depends only on $N, p, s$, $\rho$, $r$, and $\Omega$. A similar argument applied to $-u$ yields the lower bound.
\end{proof}

\section{H\"older regularity}\label{sec5}

\noindent
In this section we will obtain the H\"older regularity of solutions.

\subsection{Interior H\"older regularity}

\noindent
We now study the behavior of a weak supersolution in a ball, proving a weak Harnack inequality. Then we will obtain an estimate of the oscillation of a bounded weak solution in a ball (this can be interpreted as a first interior H\"older regularity result). All balls are meant to be centered at $0$, as translation invariance of $\psl$ allows to extend the results to balls centered at any point.
\vskip2pt
\noindent
We begin with a curious Jensen-type inequality:

\begin{lemma}\label{jensen}
Let $E\subset\R^N$ be a set of finite measure and let $u\in L^1(E)$ satisfy
\[\dashint_E u\,dx=1.\]
Then, for all $r\ge 1$ and $\lambda\ge 0$, it holds
\[\dashint_E(u^r-\lambda^r)^\frac{1}{r}\,dx\ge 1-2^\frac{r-1}{r}\lambda.\]
\end{lemma}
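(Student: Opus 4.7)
The plan is to reduce everything to a pointwise inequality and then integrate. Specifically, I expect that for every $x\in E$ one has
\[
(u(x)^r-\lambda^r)^{1/r}\ge u(x)-2^{(r-1)/r}\lambda,
\]
and then the claim follows immediately by averaging over $E$ using the normalization $\dashint_E u\,dx=1$.

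To prove the pointwise inequality, I would exploit inequality \eqref{in1}, which reads $a^q-(a-b)^q\ge 2^{1-q}b^q$ for $a\in\R$, $b\ge 0$, $q\ge 1$. Applying it with $a=u(x)$, $q=r$, and $b=2^{(r-1)/r}\lambda$ gives
\[
u(x)^r-\bigl(u(x)-2^{(r-1)/r}\lambda\bigr)^r\ge 2^{1-r}\cdot 2^{r-1}\lambda^r=\lambda^r,
\]
that is $u(x)^r-\lambda^r\ge (u(x)-2^{(r-1)/r}\lambda)^r$. Since, with the convention $t^q=|t|^{q-1}t$, the map $t\mapsto t^{1/r}$ is a strictly increasing bijection of $\R$ with inverse $t\mapsto t^r$, we may extract $1/r$-th powers on both sides and obtain the desired pointwise bound.

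The only subtlety I foresee is making sure the reader is comfortable that, under the convention $a^q=|a|^{q-1}a$, both the expression $(u^r-\lambda^r)^{1/r}$ and $(u-2^{(r-1)/r}\lambda)^r$ make sense for all real values of $u$ (not only when $u\ge\lambda$) and that the map $t\mapsto t^{1/r}$ is monotone in this generalised sense. Once that is clarified, the proof is essentially a one-line application of \eqref{in1} followed by integration, so I do not expect any further obstacle.
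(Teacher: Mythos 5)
Your proof is correct, and it takes a genuinely different route from the paper. The paper establishes the same pointwise inequality $(t^r-\lambda^r)^{1/r}\ge t-2^{(r-1)/r}\lambda$ by a calculus argument: it studies $g(t)=(t^r-\lambda^r)^{1/r}$, finds that $t_\lambda=2^{-1/r}\lambda$ is the unique solution of $g'(t)=1$, and then asserts via ``elementary calculus'' that $g$ lies above the line $t-2t_\lambda$ through that point. You instead derive the pointwise bound in one stroke from the algebraic inequality \eqref{in1}, $a^q-(a-b)^q\ge 2^{1-q}b^q$, applied with $a=u(x)$, $q=r$, $b=2^{(r-1)/r}\lambda$, followed by monotonicity of $t\mapsto t^{1/r}$ under the signed-power convention $a^q=|a|^{q-1}a$. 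The algebra checks out: $2^{1-r}\cdot(2^{(r-1)/r}\lambda)^r=\lambda^r$, giving $u^r-\lambda^r\ge(u-2^{(r-1)/r}\lambda)^r$, and the signed $1/r$-th power is indeed the strictly increasing inverse of $t\mapsto t^r$ on all of $\R$, so extracting roots preserves the inequality. What your approach buys is that the ``elementary calculus'' step is replaced by an inequality already proved in full in the paper; it also exposes that Lemma \ref{jensen} is essentially a change-of-variables restatement of \eqref{in1} (the proof of \eqref{in1} itself minimizes $t\mapsto t^q-(t-b)^q$ and finds the minimum at $t=b/2$, which is the same optimization hidden in the paper's choice of $t_\lambda$). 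Your version is therefore tighter and more self-contained.
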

\begin{proof}
Avoiding trivial cases, we assume $r>1$ and $\lambda>0$. Set, for all $t\in\R,$
\[g(t)=(t^r-\lambda^r)^\frac{1}{r}.\]
Then, for all $t\in\R\setminus\{0,\lambda\}$, we have
\[g'(t)=|t^r-\lambda^r|^\frac{1-r}{r}|t|^{r-1}.\]
In particular, $t_\lambda=2^{-1/r}\lambda$ is the only solution of $g'(t)=1$. Elementary calculus shows that for all $t\in\R$
\[g(t)\ge g(t_\lambda)+g'(t_\lambda)(t-t_\lambda)=t-2t_\lambda.\]
So we have
\[\dashint_E (g\circ u)\,dx\ge\dashint_E(u-2t_\lambda)\,dx=1-2^\frac{r-1}{r}\lambda,\]
which concludes the proof.
\end{proof}

\noindent
Now we prove a weak Harnack-type inequality for non-negative supersolutions:

\begin{theorem}[Weak Harnack inequality]\label{harnack}
There exist universal $\sigma\in(0,1)$, $\bar C>0$ with the following property: if $u\in \widetilde{W}^{s,p}(B_{R/3})$ satisfies weakly
\[\begin{cases}
\psl u\ge -K & \text{in $B_{R/3}$} \\
u\ge 0 & \text{in $\R^N$,}
\end{cases}\]
for some $K\ge 0$, then
\[\inf_{B_{R/4}}u\ge\sigma\Big(\dashint_{B_R\setminus B_{R/2}}u^{p-1}\,dx\Big)^\frac{1}{p-1}-\bar C(KR^{ps})^\frac{1}{p-1}.\]
\end{theorem}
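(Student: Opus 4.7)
The strategy is to transfer the mass of $u$ on the annulus $B_1\setminus B_{1/2}$ into a pointwise lower bound on $u$ inside $B_{1/4}$ by exploiting the non-local behaviour of $\psl$ encoded in Lemma~\ref{psadd}, and then to quantify the output via a comparison with the explicit subbarrier of Lemma~\ref{cupola}. No Caccioppoli or logarithmic estimate enters the argument.

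By Lemma~\ref{hs}\ref{hs.2} applied with $\rho=R$ the statement reduces to $R=1$ (with $K$ replaced by $KR^{ps}$), so we let
\[
\mu:=\Bigl(\dashint_{B_1\setminus B_{1/2}}u^{p-1}\,dx\Bigr)^{1/(p-1)},\qquad \bar u:=u-u\,\chi_{B_1\setminus B_{1/2}}.
\]
Then $\bar u\ge 0$ on $\R^N$, $\bar u=u$ on $B_{1/2}\supset B_{1/4}$, and the additive perturbation $v:=-u\,\chi_{B_1\setminus B_{1/2}}$ has support at distance at least $1/6$ from $B_{1/3}$ and satisfies \eqref{hyppertv}. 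Lemma~\ref{psadd} then gives weakly in $B_{1/3}$
\[
\psl\bar u = \psl u + h \ge -K+h,\qquad h(x):=2\int_{B_1\setminus B_{1/2}}\frac{u(x)^{p-1}-(u(x)-u(y))^{p-1}}{|x-y|^{N+ps}}\,dy.
\]

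For the degenerate range $p\ge 2$, inequality~\eqref{in1} with $q=p-1$ gives $u(x)^{p-1}-(u(x)-u(y))^{p-1}\ge 2^{2-p}u(y)^{p-1}$. Combined with $|x-y|\le 4/3$ for $x\in B_{1/3}$, $y\in B_1\setminus B_{1/2}$, this yields $h(x)\ge c_0(N,p,s)\,\mu^{p-1}$ on $B_{1/3}$, so $\psl\bar u\ge(c_0\mu^{p-1}-K)_+$ weakly there. Let $w_0\in W^{s,p}_0(B_1)$ be the radially non-increasing solution of $\psl w_0=1$ provided by Lemma~\ref{cupola} (strictly positive on $B_r$ for every $r<1$), and use Lemma~\ref{hs} to scale it to
\[
w_1(x):=\Bigl(\frac{(c_0\mu^{p-1}-K)_+}{3^{ps}}\Bigr)^{1/(p-1)}w_0(3x)\in W^{s,p}_0(B_{1/3}),\qquad \psl w_1=(c_0\mu^{p-1}-K)_+.
\]
Since $\bar u\ge 0=w_1$ on $B_{1/3}^c$ and $\psl\bar u\ge\psl w_1$ in $B_{1/3}$, Proposition~\ref{comp} gives $\bar u\ge w_1$ on $\R^N$, hence
\[
\inf_{B_{1/4}}u\ge w_0(3/4)\cdot 3^{-ps/(p-1)}(c_0\mu^{p-1}-K)_+^{1/(p-1)}.
\]
The claimed form $\sigma\mu-\bar C K^{1/(p-1)}$ then follows from the subadditivity $(a-b)_+^{1/(p-1)}\ge a^{1/(p-1)}-b^{1/(p-1)}$, valid for $p\ge 2$ because $t\mapsto t^{1/(p-1)}$ is concave with $f(0)=0$; the residual case $c_0\mu^{p-1}\le K$ is trivial from $u\ge 0$ upon enlarging $\bar C$.

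The main technical obstacle is the singular range $1<p<2$. There $q=p-1\in(0,1)$ renders $a\mapsto a^q$ concave, so the pointwise bound $a^q-(a-b)^q\ge c\,b^q$ fails and the lower bound on $h$ above cannot be reproduced. Lemma~\ref{jensen}, applied on $E=B_1\setminus B_{1/2}$ with $r=(p-1)^{-1}\ge 1$ after normalizing $u$ by $\mu$, is the natural substitute: it converts the $L^{p-1}$-mean hypothesis into a measure estimate of the form $|\{u\ge c\mu\}\cap E|\ge c'|E|$. One then reruns the Lemma~\ref{psadd}-plus-comparison scheme with $v$ replaced by a truncation of $u$ from below at level $c\mu$ on $E$; the new $h$ is estimated from below by the density of the super-level set times a positive constant, still producing enough source to close the sub-barrier argument. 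Keeping the constants $\sigma$ and $\bar C$ uniform down to $p\to 1^+$ and across the threshold $p=2$ is the delicate point of the proof.
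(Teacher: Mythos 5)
Your argument for $p\ge 2$ is correct and takes a genuinely different route from the paper's. The paper builds a sub-barrier $w=\sigma L\varphi_R+\chi_{B_R\setminus B_{R/2}}u$, shows via Lemma~\ref{psadd} and \eqref{in1} that $\psl w\le -K$ in $B_{R/3}$, and compares $w$ with $u$. You instead subtract the annulus mass from $u$, showing via the same two tools that $\bar u=u-u\chi_{B_1\setminus B_{1/2}}$ is a weak supersolution with boosted source $\ge c_0\mu^{p-1}-K$ in $B_{1/3}$, and then compare $\bar u$ from below with a scaled copy of the torsion-like function of Lemma~\ref{cupola}. The mechanism (turning annular mass into a positive non-local source, then invoking Proposition~\ref{comp}) is the same, but the roles of barrier and solution are reversed. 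A mild advantage of your version is that it never requires the smooth cut-off $\varphi_R$ to lie in $\mathcal{L}(B_{R/3})$, which is exactly what fails for $1<p<2$ and forces the paper to replace $\varphi_R$ with Lemma~\ref{cupola}'s solution in that regime; you use that solution from the outset. Minor expository slip: $\psl\bar u\ge -K+h$ does not give $\psl\bar u\ge(c_0\mu^{p-1}-K)_+$ in the regime $c_0\mu^{p-1}\le K$; the correct formulation is the case split you state afterwards, which does close the $p\ge 2$ argument.

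The sketch for $1<p<2$, however, contains a genuine gap. Lemma~\ref{jensen} does \emph{not} produce a measure estimate of the form $|\{u\ge c\mu\}\cap E|\ge c'|E|$: its conclusion is the averaged bound $\dashint_E(U^r-\lambda^r)^{1/r}\ge 1-2^{(r-1)/r}\lambda$, and to convert that into a density bound on the super-level set one would need an a priori upper bound on $u$, which a supersolution does not have. Indeed no such estimate can hold under the $L^{p-1}$-mean hypothesis alone: putting $u=M$ on a subset of $E$ of relative measure $\eps$ and $u=0$ elsewhere yields $\mu^{p-1}=\eps M^{p-1}$, so $\{u\ge c\mu\}\cap E$ has relative measure $\eps$, which is arbitrarily small. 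Consequently the "density of the super-level set" you plan to feed into $h$ cannot be bounded below, and your $p<2$ argument does not close. The obstacle is structural: if one runs Lemma~\ref{jensen} inside your decomposition, with $\lambda=(u(x)/\mu)^{p-1}$, the resulting lower bound on $h(x)$ is proportional to $\mu^{p-1}-(2^{2-p}-1)\,u(x)^{p-1}$, which is negative wherever $u(x)$ is large since $2^{2-p}>1$ for $p<2$. The paper sidesteps exactly this by applying Lemma~\ref{jensen} directly to the averaged integrand $\dashint_{B_R\setminus B_{R/2}}(\sigma L\varphi_R(x)-u(y))^{p-1}\,dy$, where the shift is $\lambda=(\sigma\varphi_R(x))^{p-1}$: this is \emph{uniformly small}, controlled by the universal $\sigma$ and the bounded $\|\varphi_R\|_\infty$, not by the unbounded $u(x)$. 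To salvage your $p<2$ case you would need to keep the comparison centred on a uniformly bounded barrier (as the paper does) rather than on $u(x)$ itself.
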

\begin{proof}
We first consider the case $p\ge 2$. Let $\varphi\in C^\infty(\R^N)$ be such that $0\le\varphi\le 1$ in $\R^N$, $\varphi=1$ in $B_{3/4}$, $\varphi=0$ in $B^c_1$, and by Proposition \ref{weak-strong} $|\psl\varphi|\le C_1$ weakly in $B_1$. We rescale by setting $\varphi_R(x)=\varphi(3x/R)$, so $\varphi_R\in C^\infty(\R^N)$, $0\le\varphi_R\le 1$ in $\R^N$, $\varphi_R=1$ in $B_{R/4}$, $\varphi_R=0$ in $B^c_{R/3}$, and $|\psl\varphi_R|\le C_1R^{-ps}$ weakly in $B_{R/3}$ (taking $C_1$ bigger).
\vskip2pt
\noindent
For all $\sigma\in(0,1)$ we set
\[L=\Big(\dashint_{B_R\setminus B_{R/2}}u^{p-1}\,dx\Big)^\frac{1}{p-1}, \quad w=\sigma L\varphi_R+\chi_{B_R\setminus B_{R/2}} u\]
(see figure \ref{w-barr}).
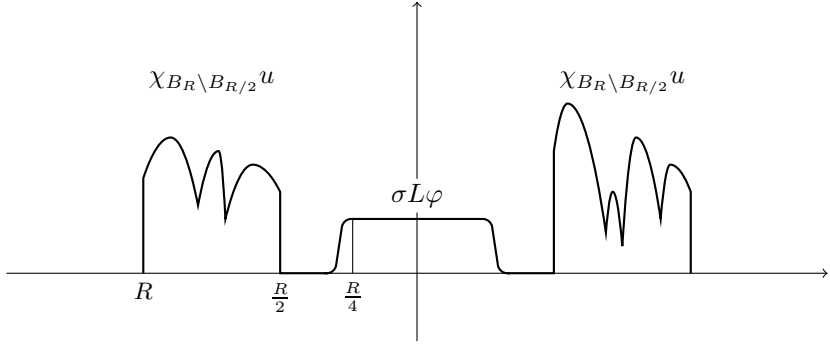
\begin{figure}
\centering
\begin{tikzpicture}[scale=1.8]
\draw[->] (-3, 0) -- (3, 0);
\draw[->] (0, -0.5) -- (0, 2);

\draw[thick, rounded corners=3pt]  (-0.68, 0) -- (-0.6, 0) -- (-0.54, 0.4) -- node[above=2pt, fill=white, inner sep=2pt]{$\sigma L\varphi$} (0.54, 0.4) -- (0.6, 0) -- (0.68,0);
\draw (-0.47, 0) node[below]{\small $\frac{R}{4}$} -- (-0.47, 0.4);

\draw[thick] (-2,0) node[below]{\small $R$} -- (-2, 0.7) parabola bend (-1.8, 1) (-1.6,0.5) parabola bend (-1.45, 0.9) (-1.4, 0.4) parabola bend (-1.2, 0.8) (-1,0.6) -- (-1,0) node[below]{\small $\frac{R}{2}$} -- (-0.65, 0) ;
\draw (-1.5,1.4) node{$\chi_{B_R\setminus B_{R/2}}u$};
\draw[thick] (0.66,0) -- (1,0) -- (1,0.9) parabola bend (1.1, 1.25)  (1.38,0.3) parabola bend (1.43, 0.6) (1.5,0.2) parabola bend (1.6,1) (1.78,0.4) parabola bend (1.85, 0.8) (2,0.6) -- (2,0);
\draw (1.5,1.4) node{$\chi_{B_R\setminus B_{R/2}}u$};

\end{tikzpicture}
\caption{The lower barrier $w$.}
\label{w-barr}
\end{figure}
So $w\in \widetilde{W}^{s,p}(B_{R/3})$  and by Lemma \ref{psadd} and \eqref{in1} we have, weakly in $B_{R/3}$,
\begin{align*}
&\psl w(x)\\
&= \psl(\sigma L\varphi_R)(x)+2\int_{B_R\setminus B_{R/2}}\frac{(\sigma L\varphi_R(x)-u(y))^{p-1}-(\sigma L\varphi_R(x))^{p-1}}{|x-y|^{N+ps}} \, dy\\
&\le \frac{C_1(\sigma L)^{p-1}}{R^{ps}}-2^{3-p}\int_{B_R\setminus B_{R/2}}\frac{u^{p-1}(y)}{|x-y|^{N+ps}}\,dy \\
&\le \frac{C_1(\sigma L)^{p-1}}{R^{ps}}-\frac{C_2L^{p-1}}{R^{ps}}.
\end{align*}
If we assume
\[\sigma<\min\Big\{1,\Big(\frac{C_2}{2C_1}\Big)^\frac{1}{p-1}\Big\},\]
we get the upper estimate
\beq\label{hk1}
\psl w(x)\le-\frac{C_2L^{p-1}}{2R^{ps}} \quad  \text{weakly in $B_{R/3}$.}
\eeq
We set $\bar C=(2/C_2)^{1/(p-1)}$ and distinguish two cases:
\begin{itemize}
\item if $L\le\bar C(KR^{ps})^{1/(p-1)}$, then clearly
\[\inf_{B_{R/4}}u\ge 0\ge\sigma L-\bar C(KR^{ps})^\frac{1}{p-1};\]
\item if $L>\bar C(KR^{ps})^{1/(p-1)}$, then we use \eqref{hk1} to have 
\[\begin{cases}
\psl w\le -K\le \psl u & \text{weakly in $B_{R/3}$} \\
w=\chi_{B_R\setminus B_{R/2}} u\le u & \text{in $B^c_{R/3}$,}
\end{cases}\]
which by Proposition \ref{comp} implies $w\le u$ in $\R^N$, in particular
\[\inf_{B_{R/4}}u\ge \sigma L.\]
\end{itemize}
In any case we have
\[\inf_{B_{R/4}}u \ge \sigma L-\bar C(KR^{ps})^\frac{1}{p-1},\]
which is the conclusion.
\vskip2pt
\noindent
Now we consider the case $p\in(1,2)$. Due to Remark \ref{remarkL}, in this case we cannot use the cut-off function $\varphi$ as before to construct the barrier $w$. We use instead the weak solution $v$ of \eqref{dir1} introduced in Lemma \ref{cupola}, recalling that $\inf_{B_{3/4}}v>0$, and we set
\[\varphi_R(x)=\Big(\inf_{B_{3/4}}v\Big)^{-1}v\Big(\frac{3x}{R}\Big),\] 
so that $0\le\varphi_R\le\alpha$ (for some universal $\alpha>0$) in $\R^N$, $\varphi_R\ge 1$ in $B_{R/4}$, $\varphi_R=0$ in $B^c_{R/3}$, and $\psl\varphi_R= C_1R^{-ps}$ weakly in $B_{R/3}$. Accordingly, to obtain the estimate \eqref{hk1} we apply Lemma \ref{jensen} to the function $(u/L)^{p-1}$ with $E=B_R\setminus B_{R/2}$, $r=1/(p-1)$, and $\lambda=(\sigma\varphi_R(x))^{p-1}$, so that
\[\dashint_{B_R\setminus B_{R/2}}\Big(\frac{u(y)}{L}-\sigma\varphi_{R}(x)\Big)^{p-1}\,dy\ge 1-2^{2-p}(\sigma\varphi_R(x))^{p-1},\]
for a.e.\ $x\in B_{R/3}$. This, in turn, implies that for a.e.\ $x\in B_{R/3}$
\begin{align*}
&2\int_{B_R\setminus B_{R/2}}\frac{(\sigma L\varphi_R(x)-u(y))^{p-1}-(\sigma L\varphi_R(x))^{p-1}}{|x-y|^{N+ps}}\, dy\\ 
&\le \frac{C_2}{R^{ps}}\dashint_{B_R\setminus B_{R/2}}(\sigma L\varphi_R(x)-u(y))^{p-1}\,dy \\
&\le \frac{C_2}{R^{ps}}\big(2^{2-p}(\sigma L\varphi_R(x))^{p-1}-L^{p-1}\big) \\
&\le -\frac{C_2L^{p-1}}{2R^{ps}},
\end{align*}
where we have chosen $\sigma<2^{\frac{p-3}{p-1}}\alpha^{-1}$. Then, by taking $\sigma$ even smaller if necessary, we get \eqref{hk1} and the rest of the proof follows {\em verbatim}.
\end{proof}

\noindent
We need to extend Theorem \ref{harnack} to supersolutions which are only non-negative in a ball. To do so, we introduce a tail term defined as in \eqref{deftail}:

\begin{lemma}\label{harnackloc}
There exist $\sigma\in(0,1)$, $\tilde C>0$, and for all $\eps>0$ a constant $C_\eps>0$ with the following property: if $u\in \widetilde{W}^{s,p}(B_{R/3})$ satisfies weakly
\[\begin{cases}
\psl u\ge -K & \text{in $B_{R/3}$} \\
u\ge 0 & \text{in $B_R$,}
\end{cases}\]
for some $K\geq 0$, then
\beq
\label{hloctesi}
\inf_{B_{R/4}}u\ge\sigma\Big(\dashint_{B_R\setminus B_{R/2}}u^{p-1}\,dx\Big)^\frac{1}{p-1}-\tilde C(KR^{ps})^\frac{1}{p-1}-C_\eps{\rm Tail}(u_-;R)-\eps\sup_{B_R}u.
\eeq
\end{lemma}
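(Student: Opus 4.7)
The plan is to reduce this lemma to the global weak Harnack inequality of Theorem~\ref{harnack} by replacing $u$ with its positive part $u_+ = u + u_-$, which is non-negative on all of $\R^N$, and controlling the error introduced in the equation. Since $u\ge 0$ in $B_R$, the support of $u_-$ lies in $B_R^c$, so $\dist(\mathrm{supp}\,u_-, B_{R/3})\ge 2R/3>0$ and the integral condition on $u_-$ follows from $u\in\widetilde{W}^{s,p}(B_{R/3})$. These are exactly the hypotheses \eqref{hyppertv} of Lemma~\ref{psadd} applied with $v=u_-$, so $u_+\in\widetilde{W}^{s,p}(B_{R/3})$ and $\psl u_+ = \psl u + h$ weakly in $B_{R/3}$, where, using that $u(y)=-u_-(y)$ on $\mathrm{supp}\,u_-$,
\[
h(x)= 2\int_{B_R^c}\frac{u(x)^{p-1}-(u(x)+u_-(y))^{p-1}}{|x-y|^{N+ps}}\,dy\le 0,
\]
since $u(x)\ge 0$ for $x\in B_{R/3}\subset B_R$.

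Next I would bound $|h|$. For $x\in B_{R/3}$ and $y\in B_R^c$ we have $|x-y|\ge 2|y|/3$, so the kernel is comparable to the one in the tail \eqref{deftail}. Using \eqref{in7} if $p\ge 2$, or \eqref{in2} if $1<p<2$, gives, for any $\theta\in(0,1)$,
\[
(u(x)+u_-(y))^{p-1}-u(x)^{p-1}\le \theta u(x)^{p-1}+C_\theta u_-(y)^{p-1}.
\]
Setting $M=\sup_{B_R}u$, this yields
\[
|h(x)|\le \frac{C\theta}{R^{ps}}M^{p-1}+\frac{CC_\theta}{R^{ps}}\T(u_-;R)^{p-1}\quad\text{for a.e.\ }x\in B_{R/3},
\]
and therefore $\psl u_+\ge -\tilde K$ weakly in $B_{R/3}$, with
\[
\tilde K R^{ps}= KR^{ps}+C\theta M^{p-1}+CC_\theta\,\T(u_-;R)^{p-1}.
\]

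Now I would apply Theorem~\ref{harnack} to the non-negative function $u_+$. Since $u_+\equiv u$ on $B_R$, both $\inf_{B_{R/4}}u_+=\inf_{B_{R/4}}u$ and the average over $B_R\setminus B_{R/2}$ coincide with those of $u$, giving
\[
\inf_{B_{R/4}}u\ge \sigma\Big(\dashint_{B_R\setminus B_{R/2}}u^{p-1}\,dx\Big)^{1/(p-1)}-\bar C(\tilde K R^{ps})^{1/(p-1)}.
\]
To finish I would split the last term by quasi-subadditivity of $t\mapsto t^{1/(p-1)}$ (concavity when $p\ge 2$, inequality \eqref{in3} when $1<p<2$) into three summands proportional to $(KR^{ps})^{1/(p-1)}$, $\theta^{1/(p-1)}M$ and $C_\theta^{1/(p-1)}\T(u_-;R)$. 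Choosing $\theta$ small enough (depending on $\eps$) so that the prefactor of $M$ is at most $\eps$ yields \eqref{hloctesi}.

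The main technical point is the sign tracking in $h$: because $h\le 0$ it genuinely degrades the lower bound for $\psl u_+$, so the bound must be on $|h|$ via the Young-type inequality \eqref{in7}, with the free parameter $\theta$ providing the room to absorb the $M^{p-1}$ term at the end. Handling the singular range $1<p<2$ will also require care in the $(p-1)$-root step, where one must invoke \eqref{in3} rather than subadditivity.
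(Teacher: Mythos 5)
Your proposal is correct and follows essentially the same route as the paper: apply Lemma~\ref{psadd} with $v=u_-$ to pass to $u_+$, bound the resulting perturbation term via \eqref{in7} (resp.\ \eqref{in2}) and the kernel comparison $|x-y|\gtrsim|y|$, invoke Theorem~\ref{harnack} for the non-negative $u_+$, and absorb the $\sup_{B_R}u$ contribution by choosing $\theta$ small in terms of $\eps$. The only cosmetic difference is that you track the error as a bound on $|h|$ while the paper lower-bounds the integral directly, but the estimates and the choice of parameters are identical.
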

\begin{proof}
First we consider the case $p\ge 2$. We apply Lemma \ref{psadd} to the functions $u$ and $v=u_-$, so that $u+v=u_+$, and $\Omega=B_{R/3}$: we have in a weak sense in $B_{R/3}$
\begin{align*}
&\psl u_+(x) \\
&= \psl u(x)+2\int_{B^c_{R/3}}\frac{(u(x)-u(y)-u_-(y))^{p-1}-(u(x)-u(y))^{p-1}}{|x-y|^{N+ps}}\,dy \\
&\ge -K+2\int_{\{u<0\}}\frac{u(x)^{p-1}-(u(x)-u(y))^{p-1}}{|x-y|^{N+ps}}\,dy \\
&\ge -K+C\int_{\{u<0\}}\frac{u(x)^{p-1}-(u(x)-u(y))^{p-1}}{|y|^{N+ps}}\,dy,
\end{align*}
where in the end we have used that $|x-y|\ge 2/3|y|$. By \eqref{in7}, for any $\theta>0$ there exists $C_\theta>0$ such that weakly in $B_{R/3}$
\begin{align*}
\psl u_+(x) &\ge -K-\theta\big(\sup_{B_R}u\big)^{p-1}\int_{B^c_R}\frac{1}{|y|^{N+ps}}\,dy-\frac{C_\theta}{R^{ps}}{\rm Tail}(u_-;R)^{p-1} \\
&\ge -K-\frac{C\theta}{R^{ps}}\big(\sup_{B_R}u\big)^{p-1}-\frac{C_\theta}{R^{ps}}{\rm Tail}(u_-;R)^{p-1} =: -\tilde K.
\end{align*}
Now, by applying Theorem \ref{harnack} to $u_+$ we have for any $\eps>0$ and $\theta<(\eps/\bar C)^{p-1}$,
\begin{align*}
\inf_{B_{R/4}}u &\ge \sigma\Big(\dashint_{B_R\setminus B_{R/2}}u^{p-1}\,dx\Big)^\frac{1}{p-1}-\bar C(\tilde KR^{ps})^\frac{1}{p-1} \\
&\ge \sigma\Big(\dashint_{B_R\setminus B_{R/2}}u^{p-1}\,dx\Big)^\frac{1}{p-1}-\tilde C(KR^{ps})^\frac{1}{p-1}-C_\eps{\rm Tail}(u_-;R)-\eps\sup_{B_R}u
\end{align*}
for some universal constant $\tilde C>0$ and a convenient $C_\eps>0$ depending also on $\eps$.
\vskip2pt
\noindent
Now we turn to the case $p\in(1,2)$. The argument in this case is in fact easier, as by \eqref{in2} we have
\[\int_{\{u<0\}}\frac{u(x)^{p-1}-(u(x)-u(y))^{p-1}}{|y|^{N+ps}}\,dy\le\frac{1}{R^{ps}}{\rm Tail}(u_-;R)^{p-1}\quad \text{for a.e.\ $x\in B_{R/3}$},\]
then we argue as above using \eqref{in3} instead of \eqref{in1} when required.
\end{proof}

\noindent
Clearly, symmetric versions of Theorem \ref{harnack} and Lemma \ref{harnackloc} also hold. Now we use the above results to produce an estimate of the oscillation of a bounded function $u$ such that $\psl u$ is locally bounded. We set for all $R>0$, $x_0\in \R^N$
\[Q(u; x_0, R) = \|u\|_{L^\infty(B_R(x_0))}+\T(u; x_0, R),\quad Q(u; R)=Q(u; 0, R).\]
Our result is as follows:

\begin{theorem}\label{osc}
There exist universal $\alpha\in(0,1)$, $C>0$ with the following property: if $u\in \widetilde{W}^{s,p}(B_{R_0})\cap L^\infty(B_{R_0})$ satisfies $|\psl u|\le K$ weakly in $B_{R_0}$ for some $R_0>0$, then for all $r\in(0,R_0)$
\[\underset{B_r}{\rm osc}\,u\le C\big[(KR_0^{ps})^\frac{1}{p-1}+Q(u;R_0)\big]\frac{r^\alpha}{R_0^\alpha}.\]
\end{theorem}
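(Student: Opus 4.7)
The proof follows the classical oscillation-reduction scheme driven by Lemma \ref{harnackloc}. With $\tau := 1/4$ set $r_n = \tau^n R_0$, $M_n = \esssup_{B_{r_n}} u$, $m_n = \essinf_{B_{r_n}} u$, $\omega_n = M_n - m_n$, and $M := (KR_0^{ps})^{1/(p-1)} + Q(u;R_0)$. The plan is to prove by induction that $\omega_n \le A M \mu^n$ for all $n \ge 0$, for universal $A>0$ and $\mu \in (0,1)$ to be tuned. Once this is done, choosing $\alpha = \log(1/\mu)/\log 4$ and, for any $r \in (0, R_0)$, the unique $n$ with $r_{n+1} \le r < r_n$, gives $\osc_{B_r} u \le \omega_n \le (A/\mu)\, M\, (r/R_0)^\alpha$; the base case $\omega_0 \le 2\|u\|_{L^\infty(B_{R_0})} \le 2M$ is absorbed into $A$.

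For the inductive step I use the standard annulus dichotomy on $A_n := B_{r_n} \setminus B_{r_n/2}$: if $|\{x \in A_n : u(x) \ge (M_n+m_n)/2\}| \ge |A_n|/2$ set $v_n := u - m_n$, otherwise $v_n := M_n - u$. In either case $v_n \ge 0$ on $B_{r_n}$, $|\psl v_n| = |\psl u| \le K$ weakly (by translation invariance and $\psl(-u) = -\psl u$), and $v_n \ge \omega_n/2$ on at least half of $A_n$, whence $\big(\dashint_{A_n} v_n^{p-1}\, dx\big)^{1/(p-1)} \ge c_1 \omega_n$ with universal $c_1 = 2^{-p/(p-1)}$. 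Lemma \ref{harnackloc} applied to $v_n$ with $\eps = \sigma c_1/2$ (so that $C_\eps$ becomes a universal constant) together with the identity $\inf_{B_{r_{n+1}}} v_n \in \{m_{n+1}-m_n,\, M_n-M_{n+1}\}$ gives
\[
\omega_{n+1} \le \theta\, \omega_n + \widetilde C\, (Kr_n^{ps})^{1/(p-1)} + C_\eps\, T_n, \qquad \theta := 1-\tfrac{\sigma c_1}{2} \in (0,1),
\]
where $T_n := \T((v_n)_-; r_n)$.

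The crucial point is to bound $T_n$ by past oscillations plus a term in $M$. Since $m_k$ is non-decreasing and $M_k$ non-increasing in $k$, one has $(v_n)_- \le \omega_k$ on $B_{r_k}$ for every $k \le n$: in the first case $(u-m_n)_-$ vanishes where $u \ge m_n$ and $(m_n - u(y))_+ \le m_n - m_k \le \omega_k$ on $B_{r_k}$, and symmetrically otherwise. Splitting $B_{r_n}^c$ into the dyadic shells $B_{r_k} \setminus B_{r_{k+1}}$ ($k = 0, \dots, n-1$) and the far region $B_{R_0}^c$, and using $\int_{|y| \ge \rho} |y|^{-N-ps}\,dy = C\rho^{-ps}$ together with \eqref{in2}--\eqref{in3} (to split $(v_n)_-^{p-1}$ on the far region, where $\T(u;R_0) \le Q(u;R_0) \le M$), one obtains
\[
T_n^{p-1} \le C_1 \sum_{k=0}^{n-1} \tau^{(n-k-1)ps} \omega_k^{p-1} + C_1\, \tau^{nps}\, M^{p-1}.
\]

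To close the induction I raise the one-step recurrence to the $(p-1)$-th power via \eqref{in2}--\eqref{in3}, insert the hypothesis $\omega_k^{p-1} \le (AM)^{p-1} \mu^{k(p-1)}$, and use $(Kr_n^{ps})^{1/(p-1)} \le M \tau^{nps/(p-1)}$. The resulting geometric series in the tail bound converges as soon as $\mu^{p-1} > \tau^{ps}$, reducing the inductive step to a quantitative inequality of the form
\[
C_2\, \theta^{p-1} \mu^{-(p-1)} + \frac{C_3}{\mu^{p-1} - \tau^{ps}} + \frac{C_4}{A^{p-1}} \le 1.
\]
The main obstacle is the calibration of constants: one must choose universal $\mu \in (\tau^{ps/(p-1)}, 1)$ close enough to $1$ that the first two summands drop strictly below $1$ (which is possible because $\theta < 1$ and the geometric factor remains bounded as $\mu \nearrow 1$), and then $A$ large enough to swallow the residual $C_4/A^{p-1}$. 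Both choices depend only on $N, p, s$, yielding the universal $\alpha$ and $C$ of the statement.
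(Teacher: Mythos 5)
Your overall framework --- oscillation reduction over dyadic balls, the annulus dichotomy applying the weak Harnack inequality to exactly one of $M_n-u$, $u-m_n$, and fixing $\eps=\sigma c_1/2$ so that $C_\eps$ becomes universal --- is a legitimate variant of the paper's argument (which applies Lemma~\ref{harnackloc} to \emph{both} functions and sums the two inequalities). The defect is in the tail estimate, and it is fatal for the induction.

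You bound $(v_n)_-$ on $B_{r_k}$ by $\omega_k$ alone, leading to
\[
T_n^{p-1}\le C_1\sum_{k=0}^{n-1}\tau^{(n-k-1)ps}\,\omega_k^{p-1}+C_1\tau^{nps}M^{p-1}.
\]
After inserting $\omega_k^{p-1}\le (AM)^{p-1}\mu^{k(p-1)}$, the dyadic sum is
\[
\sum_{k=0}^{n-1}\tau^{(n-k-1)ps}\mu^{k(p-1)}
=\mu^{(n-1)(p-1)}\sum_{j=0}^{n-1}\Big(\frac{\tau^{ps}}{\mu^{p-1}}\Big)^{j}
\le\frac{\mu^{n(p-1)}}{\mu^{p-1}-\tau^{ps}},
\]
a \emph{fixed} multiple of $\mu^{n(p-1)}$. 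Hence, as you write, closing the induction reduces to
\[
C_2\,\theta^{p-1}\mu^{-(p-1)}+\frac{C_3}{\mu^{p-1}-\tau^{ps}}+\frac{C_4}{A^{p-1}}\le 1.
\]
But as $\mu\nearrow 1$ the middle term converges to $C_3/(1-\tau^{ps})$, which is bounded --- as you observe --- but bounded \emph{below} by a positive universal quantity, not small. Since $C_3$ incorporates $C_\eps^{p-1}$ and $C_1$ (both $\ge 1$) while $1-\tau^{ps}<1$, this single summand already exceeds $1$, and no choice of $\mu$ close to $1$ and $A$ large can repair the inequality. (A secondary issue: the prefactor $C_2$ coming from raising the linear recurrence to the $(p-1)$-th power via \eqref{in3} grows with $p$, so even the first summand $C_2\theta^{p-1}\mu^{-(p-1)}$ need not drop below $1$.)

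The cancellation you are missing is precisely what the paper engineers by \emph{not} taking $m_j,M_j$ to be the exact essential inf/sup: it introduces auxiliary non-decreasing $m_j$ and non-increasing $M_j$ with $m_j\le\inf_{B_j}u\le\sup_{B_j}u\le M_j$ and the exact equality $M_j-m_j=\lambda R_j^\alpha$. This yields the sharper bound $(u-m_j)_-\le m_j-m_k=\lambda(R_k^\alpha-R_j^\alpha)=\lambda R_j^\alpha\big(4^{\alpha(j-k)}-1\big)$ on $B_k$, so the dyadic sum becomes (up to constants) $\sum_{h\ge 1}\tau^{(h-1)ps}\big(4^{\alpha h}-1\big)^{p-1}$, which is the paper's $S(\alpha)$ and \emph{vanishes} as $\alpha\to 0^+$. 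That vanishing factor is what lets the coefficient multiplying $\omega_n$ drop strictly below $1$. In your notation this amounts to keeping $(v_n)_-\le m_n-m_k\le\omega_k-\omega_n$ (or switching to the artificial exact-equality sequences) rather than discarding $-\omega_n$; without the subtraction, the factor $1/(\mu^{p-1}-\tau^{ps})$ is structurally bounded away from zero and the induction cannot close.
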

\begin{proof}
First we consider the case $p\ge 2$. For all integer $j\ge 0$ we set $R_j=R_0/4^j$, $B_j=B_{R_j}$, and $\frac{1}{2} B_j=B_{R_j/2}$. We put forward the following
\vskip2pt
\noindent
{\em Claim.} There exist a universal $\alpha\in(0,1)$ and a real $\lambda>0$ (depending on all the data), a non-decreasing sequence $(m_j)$, and a non-increasing sequence $(M_j)$, such that for all $j\ge 0$
\[m_j\le\inf_{B_j}u\le\sup_{B_j}u\le M_j, \quad  M_j-m_j=\lambda R_j^\alpha.\]
We argue by induction on $j$. Step zero: we set $M_0=\sup_{B_0}u$ and $m_0=M_0-\lambda R_0^\alpha$, where $\lambda>0$ satisfies
\beq\label{io1}
\lambda\ge \frac{2\|u\|_{L^\infty(B_0)}}{R_0^\alpha},
\eeq
which clearly implies
\[\inf_{B_0}u\ge m_0.\]
Inductive step: assume that sequences $(m_j)$, $(M_j)$ are constructed up to the index $j$. Then
\beq\label{iox}
\begin{split}
M_j-m_j &= \dashint_{B_j\setminus\frac{1}{2} B_j}(M_j-u)\,dx+\dashint_{B_j\setminus\frac{1}{2}B_j}(u-m_j)\,dx \\
&\le \Big(\dashint_{B_j\setminus\frac{1}{2}B_j}(M_j-u)^{p-1}\,dx\Big)^\frac{1}{p-1}+\Big(\dashint_{B_j\setminus\frac{1}{2} B_j}(u-m_j)^{p-1}\,dx\Big)^\frac{1}{p-1}.
\end{split}
\eeq
Let $\sigma\in(0,1)$, $\tilde C>0$ be as in Lemma \ref{harnackloc}, and multiply the previous inequality by $\sigma$ to obtain, via \eqref{hloctesi}, 
\beq
\label{ioxx}
\begin{split}
\sigma(M_j-m_j) &\le \inf_{B_{j+1}}(M_j-u)+\inf_{B_{j+1}}(u-m_j)+2\tilde C(KR_0^{ps})^\frac{1}{p-1} \\
&\quad + C_\eps\big[{\rm Tail}((M_j-u)_-;R_j)+{\rm Tail}((u-m_j)_-;R_j)\big]\\
&\quad +\eps\Big[\sup_{B_j}(M_j-u)+\sup_{B_j}(u-m_j)\Big].
\end{split}
\eeq
Setting universally $\eps=\sigma/4$, $C=\max\{2\tilde C,C_\eps\}$ and rearranging, we have
\beq\label{io2}
\begin{split}
\underset{B_{j+1}}{\rm osc}\,u &\le \Big(1-\frac{\sigma}{2}\Big)(M_j-m_j)\\
&\quad +C\big[(KR_0^{ps})^\frac{1}{p-1}+{\rm Tail}((M_j-u)_-;R_j)+{\rm Tail}((u-m_j)_-;R_j)\big].
\end{split}
\eeq
Now we provide an estimate of both non-local tails, firstly noting that
\beq\label{io3}
\begin{split}
{\rm Tail}((u-m_j)_-;R_j)^{p-1}&=R_j^{ps}\sum_{k=0}^{j-1}\int_{B_k\setminus B_{k+1}}\frac{(u(y)-m_j)_-^{p-1}}{|y|^{N+ps}}\,dy\\
&\quad +R_j^{ps}\int_{B^c_0}\frac{(u(y)-m_j)_-^{p-1}}{|y|^{N+ps}}\,dy.
\end{split}
\eeq
We consider the first term: by the inductive hypothesis, for all $0\le k\le j-1$ we have in $B_k\setminus B_{k+1}$
\[(u-m_j)_-\le m_j-m_k \le (m_j-M_j)+(M_k-m_k) = \lambda(R_k^\alpha-R_j^\alpha),\]
hence
\begin{align*}
\sum_{k=0}^{j-1}\int_{B_k\setminus B_{k+1}}\frac{(u(y)-m_j)_-^{p-1}}{|y|^{N+ps}}\,dy &\le \lambda^{p-1}R_j^{\alpha(p-1)}\sum_{k=0}^{j-1}\int_{B_k\setminus B_{k+1}}\frac{(4^{\alpha(j-k)}-1)^{p-1}}{|y|^{N+ps}}\,dy \\
&\le C\lambda^{p-1} S(\alpha)R_j^{\alpha(p-1)-ps},
\end{align*}
where we have set for all $\alpha\in(0,1)$
\[S(\alpha)=\sum_{h=1}^\infty\frac{(4^{\alpha h}-1)^{p-1}}{4^{psh}},\]
noting that $S(\alpha)\to 0$ as $\alpha\to 0^+$. Regarding the second term, by the inductive hypothesis we have
\[m_j\le\inf_{B_j}u\le\sup_{B_j}u\le\|u\|_{L^\infty(B_0)},\]
hence
\[\int_{B^c_0}\frac{(u(y)-m_j)_-^{p-1}}{|y|^{N+ps}}\,dy\le \int_{B^c_0}\frac{(\|u\|_{L^\infty(B_0)}+|u(y)|)^{p-1}}{|y|^{N+ps}}\,dy \le \frac{CQ(u; R_0)^{p-1}}{R_0^{ps}}.\]
Choosing $\alpha<ps/(p-1)$ and plugging the above inequalities in \eqref{io3}, we get
\begin{align*}
{\rm Tail}((u-m_j)_-;R_j) &\le C\Big[\lambda^{p-1}S(\alpha)R_j^{\alpha(p-1)}+\frac{Q(u; R_0)^{p-1}R_j^{ps}}{R_0^{ps}}\Big]^\frac{1}{p-1} \\
&\le C\Big[\lambda S(\alpha)^\frac{1}{p-1}+\frac{Q(u; R_0)}{R_0^\alpha}\Big]R_j^\alpha.
\end{align*}
An analogous estimate holds for ${\rm Tail}((M_j-u)_-;R_j)$, so from \eqref{io2} we have
\beq\label{io4}
\begin{split}
\underset{B_{j+1}}{\rm osc}\,u &\le \Big(1-\frac{\sigma}{2}\Big)\lambda R_j^\alpha+C\Big[(KR_0^{ps})^\frac{1}{p-1}+\lambda S(\alpha)^\frac{1}{p-1}R_j^\alpha+\frac{Q(u; R_0)R_j^\alpha}{R_0^\alpha}\Big] \\
&\le 4^\alpha\Big[\Big(1-\frac{\sigma}{2}\Big)+CS(\alpha)^\frac{1}{p-1}\Big]\lambda R_{j+1}^\alpha\\
&\quad +4^\alpha C\Big[K^\frac{1}{p-1}R_0^{\frac{ps}{p-1}-\alpha}+\frac{Q(u; R_0)}{R_0^\alpha}\Big]R_{j+1}^\alpha.
\end{split}
\eeq
Now we choose $\alpha\in(0,ps/(p-1))$ universally such that
\[4^\alpha\Big[\Big(1-\frac{\sigma}{2}\Big)+CS(\alpha)^\frac{1}{p-1}\Big]\le 1-\frac{\sigma}{4},\]
and we set
\beq\label{io5}
\lambda=\frac{4^{\alpha+1}}{\sigma} C\Big[K^\frac{1}{p-1}R_0^{\frac{ps}{p-1}-\alpha}+\frac{Q(u; R_0)}{R_0^\alpha}\Big],
\eeq
which implies \eqref{io1} as $4^{\alpha+1} C/\sigma>2$.
So, \eqref{io4} forces
\[\underset{B_{j+1}}{\rm osc}\,u\le \lambda R_{j+1}^\alpha.\]
We may pick $m_{j+1}$, $M_{j+1}$ such that
\[m_j\le m_{j+1}\le\inf_{B_{j+1}}u\le\sup_{B_{j+1}}u\le M_{j+1}\le M_j, \ M_{j+1}-m_{j+1}=\lambda R_{j+1}^\alpha,\]
which completes the induction and proves the claim.
\vskip2pt
\noindent
Now fix $r\in(0,R_0)$ and find an integer $j\ge 0$ such that $R_{j+1}\le r<R_j$, thus $R_j\leq 4r$. Hence, by the claim and \eqref{io5}, we have
\[\underset{B_r}{\rm osc}\,u \le \underset{B_j}{\rm osc}\,u \le \lambda R_j^\alpha \le C[(KR_0^{ps})^\frac{1}{p-1}+Q(u; R_0)\big]\frac{r^\alpha}{R_0^\alpha},\]
which concludes the argument.
\vskip2pt
\noindent
Now we consider the case $p\in(1,2)$. The only major difference is in \eqref{io2}: instead of \eqref{iox} we use the inductive hypothesis to see that 
\[M_j-u \le (M_j-m_j)^{2-p}(M_j-u)^{p-1},\quad \text{in $B_j$},\]
and similarly for $u-m_j$. Hence
\[M_j-m_j\le(M_j-m_j)^{2-p}\Big[\dashint_{B_j\setminus\frac{1}{2}B_j}(M_j-u)^{p-1}\,dx+\dashint_{B_j\setminus\frac{1}{2}B_j}(u-m_j)^{p-1}\,dx\Big],\]
which in turn implies through \eqref{in3}
\begin{align*}
M_j-m_j&\le\Big[\dashint_{B_j\setminus\frac{1}{2}B_j}(M_j-u)^{p-1}\,dx+\dashint_{B_j\setminus\frac{1}{2}B_j}(u-m_j)^{p-1}\,dx\Big]^\frac{1}{p-1}\\
&\leq 2^{\frac{2-p}{p-1}}\Big[\Big(\dashint_{B_j\setminus\frac{1}{2} B_j}(M_j-u)^{p-1}\,dx\Big)^{\frac{1}{p-1}}+\Big(\dashint_{B_j\setminus\frac{1}{2} B_j}(u-m_j)^{p-1}\,dx\Big)^{\frac{1}{p-1}}\Big].
\end{align*}
Multiplying by $\sigma/2^{\frac{2-p}{p-1}}$ and applying Lemma \ref{harnackloc} we obtain \eqref{ioxx} with $\tilde\sigma=\sigma/2^{\frac{2-p}{p-1}}$, and the proof follows {\em verbatim}.
\end{proof}

\noindent
The next corollary of Theorem \ref{osc} follows from standard arguments.

\begin{corollary}
\label{corlocalpha}
There exists universal $C>0$ and $\alpha\in (0,1)$ with the following property: for all $u\in \widetilde{W}^{s,p}(B_{2R_0}(x_0))\cap L^\infty(B_{2R_0}(x_0))$ 
such that $|\psl u|\le K$ weakly in $B_{2R_0}(x_0)$,
\beq
\label{tesicorloc}
[u]_{C^\alpha(B_{R_0}(x_0))}\le C\big[(KR_0^{ps})^\frac{1}{p-1}+Q(u; x_0, 2R_0)\big]R_0^{-\alpha}.
\eeq
\end{corollary}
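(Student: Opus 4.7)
The plan is to derive the pointwise Hölder estimate by applying Theorem~\ref{osc}, with the center translated, at each point $y_1\in B_{R_0}(x_0)$ using the shifted ball $B_{R_0}(y_1)\subseteq B_{2R_0}(x_0)$. Since $|\psl u|\leq K$ weakly in $B_{2R_0}(x_0)$, the hypotheses of Theorem~\ref{osc} hold on $B_{R_0}(y_1)$ with the same constant $K$, and (using translation invariance and the same $\alpha$) one obtains for every $r\in(0,R_0)$
\[
\underset{B_r(y_1)}{\osc}\,u\leq C\bigl[(KR_0^{ps})^{1/(p-1)}+Q(u;y_1,R_0)\bigr]\frac{r^\alpha}{R_0^\alpha}.
\]
The whole point is then to replace $Q(u;y_1,R_0)$, which depends on the moving center $y_1$, by the fixed quantity $Q(u;x_0,2R_0)$ appearing in \eqref{tesicorloc}.

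For the sup-term this is immediate, since $B_{R_0}(y_1)\subseteq B_{2R_0}(x_0)$. For the tail, I split $B_{R_0}^c(y_1)$ into the annular region $B_{R_0}^c(y_1)\cap B_{2R_0}(x_0)$ and the far region $B_{2R_0}^c(x_0)$. On the annular region $|y-y_1|\geq R_0$ and $|u(y)|\leq\|u\|_{L^\infty(B_{2R_0}(x_0))}$, so the contribution is bounded by a universal multiple of $\|u\|_{L^\infty(B_{2R_0}(x_0))}^{p-1}/R_0^{ps}$. On the far region $|y_1-x_0|\leq R_0\leq \tfrac12|y-x_0|$, whence $|y-y_1|\geq\tfrac12|y-x_0|$ and one immediately compares
\[
R_0^{ps}\int_{B_{2R_0}^c(x_0)}\frac{|u(y)|^{p-1}}{|y_1-y|^{N+ps}}\,dy\leq C\,(2R_0)^{ps}\int_{B_{2R_0}^c(x_0)}\frac{|u(y)|^{p-1}}{|y-x_0|^{N+ps}}\,dy=C\,\T(u;x_0,2R_0)^{p-1}.
\]
Combining the two yields $Q(u;y_1,R_0)\leq C\,Q(u;x_0,2R_0)$ with a universal constant, independently of $y_1\in B_{R_0}(x_0)$.

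It remains to turn the oscillation bound into the Hölder seminorm. For $y_1,y_2\in B_{R_0}(x_0)$ set $r=|y_1-y_2|$. If $r<R_0$ then $y_2\in B_r(y_1)$ and the oscillation estimate above directly gives $|u(y_1)-u(y_2)|\leq C\bigl[(KR_0^{ps})^{1/(p-1)}+Q(u;x_0,2R_0)\bigr](r/R_0)^\alpha$. If instead $R_0\leq r<2R_0$, one uses the trivial bound $|u(y_1)-u(y_2)|\leq 2\|u\|_{L^\infty(B_{R_0}(x_0))}\leq 2Q(u;x_0,2R_0)$ and the fact that $(r/R_0)^\alpha\geq 1$ to absorb it into the same expression. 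Dividing by $r^\alpha$ and taking the supremum over $y_1\neq y_2$ in $B_{R_0}(x_0)$ yields \eqref{tesicorloc}.

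There is no real obstacle here; the only point requiring some care is the tail comparison, where one must handle the ``inner annulus'' $B_{R_0}^c(y_1)\cap B_{2R_0}(x_0)$ separately from the ``far'' part, since the former is where the singularity $|y_1-y|^{-N-ps}$ cannot be compared with $|y-x_0|^{-N-ps}$ and is instead controlled by the $L^\infty$-norm on $B_{2R_0}(x_0)$.
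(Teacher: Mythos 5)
Your proof is correct and follows essentially the same route as the paper's: apply Theorem~\ref{osc} at the moving center, then dominate $Q(u;y_1,R_0)$ by $Q(u;x_0,2R_0)$ via the same annulus-plus-far-region splitting of the tail, using $|y-y_1|\geq\tfrac12|y-x_0|$ on the far part. The only (minor) addition is that you explicitly handle the trivial case $|y_1-y_2|\geq R_0$ via the $L^\infty$-bound, which the paper leaves implicit.
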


\begin{proof}
Given $x, y$ in $B_{R_0}(x_0)$, let $r=|x-y|$. It suffices to apply Theorem \ref{osc} to the ball $B_{R_0}(x)\subseteq B_{2R_0}(x_0)$. Clearly $\|u\|_{L^\infty(B_{R_0}(x))}\leq \|u\|_{L^\infty(B_{2R_0}(x_0))}$ and 
\begin{align*}
&\T(u; x, R_0)^{p-1}\\
&=R_0^{ps}\int_{B_{R_0}^c(x)}\frac{|u(y)|^{p-1}}{|x-y|^{N+ps}}\, dy\\
&\leq CR_0^{ps}\Big[\int_{B_{2R_0}(x_0)\setminus B_{R_0}(x)}\frac{\|u\|_{L^\infty(B_{2R_0}(x_0))}^{p-1}}{|x-y|^{N+ps}}\, dy+\int_{B_{2R_0}^c(x_0)}\frac{|u(y)|^{p-1}}{|x-y|^{N+ps}}\, dy\Big]\\
&\leq C\|u\|_{L^\infty(B_{2R_0}(x_0))}^{p-1}+CR_0^{ps}\int_{B_{2R_0}^c(x_0)}\frac{|u(y)|^{p-1}}{|x_0-y|^{N+ps}}\, dy
\end{align*}
for a universal $C$, where as usual we used $|x-y|\geq |x_0-y|/2$ for  $y\in B^c_{2R_0}(x_0)$, $x\in B_{R_0}(x_0)$.
This implies that
\[Q(u; x, R_0)\leq CQ(u; x_0, 2R_0),\]
and thus the desired estimate on the H\"older seminorm.
\end{proof}

\subsection{Global H\"older regularity}

We finally prove the stated H\"older regularity result up to the boundary.
\vskip4pt
\noindent
\textbf{Proof of Theorem \ref{main}.} We set $K=\|f\|_{L^\infty(\Omega)}$. Corollary \ref{apb} already provides the desired estimate for the $\sup$-norm, namely
\[\|u\|_{L^\infty(\Omega)}\le CK^\frac{1}{p-1},\]
so we can focus on the H\"older seminorm.
\vskip2pt
\noindent
Let $\alpha$ be the one given in Corollary \ref{corlocalpha}. We can assume $\alpha\in(0,s]$. Through a covering argument, \eqref{tesicorloc} implies that $u\in C^{\alpha}_{\rm loc}(\overline\Omega')$ for all $\Omega'\Subset\Omega$, with a bound of the form
\[\|u\|_{C^\alpha(\overline\Omega')}\leq C_{\Omega'}K^{\frac{1}{p-1}},\quad C_{\Omega'}=C(N, p, s, \Omega, \Omega').\]
Hence it suffices to prove \eqref{thm57tesi} in the closure of a fixed $\rho$-neighbourhood of $\partial\Omega$. We will suppose that $\rho>0$ is so small (depending only on $\Omega$) that Lemma \ref{geo1} holds, and thus the metric projection 
\[\Pi:V\to \partial\Omega, \quad \Pi(x)=\underset{y\in \Omega^c}{\rm Argmin}|x-y|\]
is well defined on $V:=\{x\in\overline\Omega:\delta(x)\leq \rho\}$. We claim that 
\beq
\label{claimfin}
[u]_{C^\alpha(B_{r/2}(x))}\leq C_\Omega K^{\frac{1}{p-1}},\quad \text{for all $x\in V$ and $r=\delta(x)$} 
\eeq
for some constant $C_\Omega=C(N, p, s, \Omega)$, independent on $x\in V$. We recall \eqref{tesicorloc}, which in the present case rephrases (up to a universal constant) as
\[[u]_{C^\alpha(B_{r/2}(x))}\le C\big[(Kr^{ps})^\frac{1}{p-1}+\|u\|_{L^\infty(B_r(x))}+\T(u;x,r)\big]r^{-\alpha}.\]
To prove \eqref{claimfin}, it suffices to bound the three terms on the right hand side of the above inequality. The first one it trivially dealt with since $\alpha\leq s\leq  ps/(p-1)$, and thus
\[r^{-\alpha}(Kr^{ps})^\frac{1}{p-1}\leq K^{\frac{1}{p-1}}\rho^{\frac{ps}{p-1}-\alpha}.\]
For the second one we use Theorem \ref{estid} and $\alpha\leq s$ to get
\[\|u\|_{L^\infty(B_{r}(x))}\leq CK^{\frac{1}{p-1}}(\delta(x)+r)^s\leq CK^{\frac{1}{p-1}}\rho^{s-\alpha}r^\alpha,\]
and thus the claimed bound. Similarly for the last term we employ again \eqref{thm44tesi}, together with 
\[\delta(y)\leq |y-\Pi(x)|\leq |y-x|+|x-\Pi(x)|\leq |y-x|+r\leq 2|x-y|,\quad \forall y\in B_r^c(x),\]
to get
\begin{align*}
\T(u;x,r)^{p-1} &\leq CKr^{ps}\int_{B_{r}^c(x)}\frac{\delta^{s(p-1)}(y)}{|x-y|^{N+ps}}\, dy\\
&\leq CKr^{ps}\int_{B_{r}^c(x)}\frac{|x-y|^{s(p-1)}}{|x-y|^{N+ps}}\, dy\\
&\leq CKr^{s(p-1)}.
\end{align*}
Again due to $\alpha\leq s$ we obtain the claimed bound, and the proof of \eqref{claimfin} is completed.
To prove the theorem, pick $x, y\in V$ and suppose without loss of generality that $|x-\Pi(x)|\geq|y-\Pi(y)|$. Two cases may occur:
\begin{itemize}
\item either $2|x-y|< |x-\Pi(x)|$, in which case we set $r=\delta(x)$ and apply \eqref{claimfin} in $B_{r/2}(x)$, to obtain 
\[|u(x)-u(y)|\leq C K^{\frac{1}{p-1}}|x-y|^\alpha;\]
\item or $2|x-y|\geq |x-\Pi(x)|\geq |y-\Pi(y)|$, in which case \eqref{thm44tesi} ensures
\begin{align*}
|u(x)-u(y)|&\leq |u(x)|+|u(y)|\leq C K^{\frac{1}{p-1}}(\delta^s(x)+\delta^s(y))\\
&=C K^{\frac{1}{p-1}}(|x-\Pi(x)|^s+|y-\Pi(y)|^s)\\
&\leq C K^{\frac{1}{p-1}}|x-y|^s\\
&\leq C K^{\frac{1}{p-1}}\rho^{s-\alpha}|x-y|^\alpha.
\end{align*}
\end{itemize}
Thus in both cases the $\alpha$-H\"older seminorm is bounded in $V$ and the proof is completed. \qed

\begin{remark}
\label{remalpha}
As the proofs above show, interior regularity (Theorem \ref{osc}) forces in particular $\alpha<ps/(p-1)$, while in order to control the behavior of weak solutions near the boundary we need the more restrictive bound $\alpha\le s$. Anyway, our H\"older exponent remains not explicitly determined.
\end{remark}

\projects{\noindent The first and second authors were supported by GNAMPA project ``Problemi al contorno per operatori non locali non lineari''.}


\begin{thebibliography}{99}
\parskip=0.7pt

\bibitem{AKSZ}
\RMIauthor{Aikawa, H., Kilpel\"{a}inen, T., Shanmugalingam, N. and Zhong, X.}
\RMIpaper{Boundary Harnack principle for $p$-harmonic functions in smooth Euclidean domains}
\RMIjournal{Potential Anal.} \textbf{26} (2007), 281--301.

\bibitem{B}
\RMIauthor{Baernstein, A. II}
\RMIpaper{A unified approach to symmetrization}
In \RMIbook{Partial Differential Equations of Elliptic Type: Cortona, 1992}, (Alvino, A., Fabes, F. and Talenti, G. eds.), Symposia Mathematica \textbf{35}, University Press, Cambridge (1994).

\bibitem{BCF}
\RMIauthor{Bjorland, C., Caffarelli, L. and Figalli, A.}
\RMIpaper{Non-local gradient dependent operators}
\RMIjournal{Adv. Math.} \textbf{230} (2012), 1859--1894.

\bibitem{CS1}
\RMIauthor{Cabr\'e, X. and Sire, Y.}
\RMIpaper{Nonlinear equations for fractional Laplacians I: Regularity, maximum principles, and Hamiltonian estimates}
\RMIjournal{Ann. Inst. Henri Poincar\'e Anal. Non Lin\'eaire} \textbf{31} (2014), 23--53.

\bibitem{CafSil1}
\RMIauthor{Caffarelli, L. and Silvestre, L.}
\RMIpaper{Regularity theory for fully nonlinear integro-differential equations}
\RMIjournal{Comm. Pure Appl. Math.} \textbf{62} (2009), 597--638.

\bibitem{CafSil2}
\RMIauthor{Caffarelli, L. and Silvestre, L.}
\RMIpaper{Regularity results for nonlocal equations by approximation}
\RMIjournal{Arch. Ration. Mech. Anal.} \textbf{200} (2011), 59--88.

\bibitem{DKP1}
\RMIauthor{Di Castro, A., Kuusi, T. and Palatucci, G.}
\RMIpaper{Local behavior of fractional $p$-minimizers}
\RMIjournal{Ann. Inst. Henri Poincar\'e Anal. Non Lin\'eaire} (t.a.), DOI:10.1026/j.anihpc.2015.04.003.

\bibitem{DKP2}
\RMIauthor{Di Castro, A., Kuusi, T. and Palatucci, G.}
\RMIpaper{Nonlocal Harnack inequalities}
\RMIjournal{J. Funct. Anal.} \textbf{267} (2014), 1807--1836.

\bibitem{DPV}
\RMIauthor{Di Nezza, E., Palatucci, G. and Valdinoci, E.}
\RMIpaper{Hitchhiker's guide to the fractional Sobolev spaces}
\RMIjournal{Bull. Sci. Math.} \textbf{136} (2012), 521--573.

\bibitem{ILPS}
\RMIauthor{Iannizzotto, A., Liu, S., Perera, K. and Squassina, M.}
\RMIpaper{Existence results for fractional $p$-Laplacian problems via Morse theory}
\RMIjournal{Adv. Calc. Var.} (t.a.), DOI: 10.1515/acv-2014-0024.

\bibitem{IMS}
\RMIauthor{Iannizzotto, A., Mosconi, S. and Squassina, M.}
\RMIpaper{$H^s$ versus $C^0$-weighted minimizers}
\RMIjournal{NoDEA Nonlinear Differential Equations Appl.} \textbf{26} (2015), 477--497.

\bibitem{IMS-RLM}
\RMIauthor{Iannizzotto, A., Mosconi, S. and Squassina, M.},
\RMIpaper{A note on global regularity for the weak solutions of fractional $p$-Laplacian equations}
\RMIjournal{Atti Accad. Naz. Lincei Rend. Lincei Mat. Appl.} (t.a.), arXiv:1504.01006.

\bibitem{krylov}
\RMIauthor{Krylov, N.V.}
\RMIpaper{Boundedly inhomogeneous elliptic and parabolic equations in a domain}
\RMIjournal{Izv. Akad. Nauk SSSR Ser. Mat.} \textbf{47} (1983), 75--108.

\bibitem{KMS}
\RMIauthor{Kuusi, T., Mingione, G. and Sire, Y.}
\RMIpaper{Nonlocal equations with measure data}
\RMIjournal{Comm. Math. Phys.} \textbf{337} (2015), 1317--1368.

\bibitem{Ling}
\RMIauthor{Lindgren, E.}
\RMIpaper{H\"older estimates for viscosity solutions of equations of fractional $p$-Laplace type}
Preprint, arXiv:1405.6612.

\bibitem{LL}
\RMIauthor{Lindgren, E. and Lindqvist, P.}
\RMIpaper{Fractional eigenvalues}
\RMIjournal{Calc. Var. Partial Differential Equations} \textbf{49} (2014), 795--826.

\bibitem{ponce}
\RMIauthor{Ponce, A.}
\RMIpaper{A new approach to Sobolev spaces and connections to $\Gamma$-convergence}
\RMIjournal{Calc. Var. Partial Differential Equations} \textbf{19} (2004), 229--255. 

\bibitem{RS}
\RMIauthor{Ros-Oton, X. and Serra, J.}
\RMIpaper{The Dirichlet problem for the fractional Laplacian: regularity up to the boundary}
\RMIjournal{J. Math. Pures Appl.} \textbf{101} (2014), 275--302.

\bibitem{RS3}
\RMIauthor{Ros-Oton, X. and Serra, J.}
\RMIpaper{Boundary regularity for fully nonlinear integro-differential equations}
\RMIjournal{Duke Math. J.} (t.a.), arXiv:1404.1197.

\bibitem{S}
\RMIauthor{Silvestre, L.}
\RMIpaper{H\"older estimates for solutions of integro-differential equations like the fractional Laplace}
\RMIjournal{Indiana Univ. Math. J.} \textbf{55} (2006), 1155--1174.

\bibitem{Serra}
\RMIauthor{Serra, J.}
\RMIpaper{Regularity for fully nonlinear nonlocal parabolic equations with rough kernels}
\RMIjournal{Calc. Var. Partial Differential Equations} \textbf{54} (2015), 615-629.

\bibitem{SV2}
\RMIauthor{Servadei, R. and Valdinoci, E.}
\RMIpaper{On the spectrum of two different fractional operators}
\RMIjournal{Proc. Roy. Soc. Edinburgh Sec. A} \textbf{144} (2014), 831--855.

\end{thebibliography}
\end{document}